\documentclass[a4paper,10pt]{amsart}
\usepackage[a4paper,tmargin=40mm,bmargin=35mm,hmargin=30mm]{geometry}
\usepackage[utf8]{inputenc}
\usepackage[T1]{fontenc}

\usepackage{amsmath}
\usepackage{amssymb}
\usepackage{amsthm}
\usepackage{mathtools}
\usepackage{bm}
\usepackage{cancel}
\usepackage{eucal} 
\usepackage{mathdots}
\usepackage[colorlinks=true,linkcolor=red,citecolor=blue,urlcolor=green]{hyperref}
\usepackage[capitalise]{cleveref}
\usepackage{enumitem}
\usepackage{mathrsfs}  
\usepackage[normalem]{ulem}
\usepackage{tikz-cd}
\usepackage{adjustbox}
\usepackage{contour}
\usepackage{ulem}
\usepackage{faktor}

\numberwithin{equation}{section}
\crefname{equation}{}{}

\crefformat{section}{\S#2#1#3}
\crefmultiformat{section}{\S\S#2#1#3}{ and~#2#1#3}{, #2#1#3}{, and~#2#1#3}

\contourlength{0.8pt}

\newtheorem{theorem}{Theorem}[section]
\newtheorem{prop}[theorem]{Proposition}
\newtheorem{cor}[theorem]{Corollary}
\newtheorem{lemma}[theorem]{Lemma}
\newtheorem{question}[theorem]{Question}

\theoremstyle{definition}
\newtheorem{dfn}[theorem]{Definition}
\newtheorem{rmk}[theorem]{Remark}

\newtheorem{ex}[theorem]{Example}

\DeclareMathOperator{\Hom}{\mathsf{Hom}}

\DeclareMathOperator{\Ext}{\mathsf{Ext}}
\DeclareMathOperator{\Tor}{\mathsf{Tor}}

\newcommand{\op}{\mathsf{op}}

\newcommand{\lMod}[1]{#1\mbox{-}\mathsf{Mod}}

\newcommand{\Ker}{\mathsf{Ker}}
\newcommand{\Img}{\mathsf{Img}}
\newcommand{\Coker}{\mathsf{Coker}}

\newcommand{\Add}{\mathsf{Add}}

\newcommand{{\tst}}{\textit{t}-}
\newcommand{\pd}{\mathsf{pd}}
\newcommand{\fd}{\mathsf{fd}}
\newcommand{\id}{\mathsf{id}}

\newcommand{\wD}{\mathsf{w.D}}
\newcommand{\gD}{\mathsf{gl.D}}

\newcommand{\newterm}[1]{\textit{#1}}

\title[On $(n,d)$-Coherence and Its Applications to Algebraic $\mathsf{K}$-Theory]
{On $(n,d)$-Coherence and Its Applications to Algebraic $\mathsf{K}$-Theory}
\author{Rafael Parra}
\address[R. Parra]{IMERL, Facultad de Ingenier\'\i a, Universidad de la Rep\'ublica \\
Julio Herrera y Reissig 565, 11.300, Montevideo, Uruguay}
\email{rparra@fing.edu.uy}
\subjclass[2020]{Primary: 19D35, 16P70  Secondary: 16E50}
\thanks{}

\begin{document}
\begin{abstract}
We introduce a unified approach to finiteness conditions in homological algebra and algebraic $\mathsf{K}$-theory by studying the class of $(n,d)$-coherent rings, defined for $n\in\mathbb{N}^*$ and $d\in\mathbb{N}^*\cup \{\infty\}$. This framework simultaneously controls higher finiteness conditions and bounds on the projective dimensions of modules. In the context of algebraic $\mathsf{K}$-theory, we prove that, for a left $(n,d)$-coherent ring, the inclusion $\mathsf{proj}(R)\hookrightarrow\mathsf{FP}_n^{\leq d}(R)$ induces isomorphisms on all nonnegative $\mathsf{K}$-groups. We then introduce the corresponding relative notions of $\mathsf{FP}_n^{\leq d}$-injective, $\mathsf{FP}_n^{\leq d}$-projective, $\mathsf{FP}_n^{\leq d}$-flat, and $\mathsf{FP}_n^{\leq d}$-cotorsion modules, and obtain characterizations of $(n,d)$-coherent rings in terms of these classes. When $d\geq\gD(R)$ or $d=\infty$, our notions recover several previously studied classes and results.
\end{abstract}

\maketitle
\section{Introduction}
Finitely generated and finitely presented modules over a ring $R$ play a fundamental role in homological algebra by providing the natural framework for defining finiteness conditions on rings, most notably the Noetherian and coherent conditions. These notions are closely related to algebraic $\mathsf{K}$-theory, where finiteness and homological dimensions play a central role. We refer to Section \ref{s:Finiteness conditions in modules of bounded projective dimension} for the notation used below. When $R$ is Noetherian (so that $\mathsf{FP}_0(R)=\mathsf{FP}_{\infty}(R)$) and regular (i.e., every finitely generated left ideal has finite projective dimension), the classical Resolution Theorem gives $\mathsf{K}_i(R)\cong\mathsf{K}_i(\mathsf{FP}_0(R))\cong\mathsf{K}_i(\mathsf{FP}_{\infty}(R))$ for all $i\ge 0$. In this setting, $R$ is $\mathsf{K_0}$-regular, i.e., $\mathsf{K}_0(R[t_1,\ldots,t_k])\cong\mathsf{K}_0(R)$ for all $k\geq 1$,
and its negative $\mathsf{K}$-groups vanish. This favorable behavior is closely related to the finiteness properties of Noetherian rings and their stability under polynomial extensions. The coherence setting provides a natural extension of this framework, where $\mathsf{FP}_1(R)=\mathsf{FP}_{\infty}(R)$. Unlike Noetherianity, however, coherence is not preserved by polynomial extensions in general. Nevertheless, coherent regular rings retain several of the features relevant to algebraic $\mathsf{K}$-theory. In this setting, $\mathsf{K}_i(R)\cong \mathsf{K}_i(\mathsf{FP}_1(R))\cong \mathsf{K}_i(\mathsf{FP}_{\infty}(R))$ for all $i\ge 0$, $\mathsf{K}_{-1}(R)=0$ by \cite[Thm. 3.30]{AGH19}, and $\mathsf{K}_i(R)\cong\mathsf{K}_i(R[t_1,\cdots ,t_k])$ for $i\ge k-1$ by \cite[Cor. 5.3]{BL23}. The regularity condition, however, can be difficult to verify for coherent rings, particularly when the weak dimension is infinite. One of the results of this paper gives a characterization in terms of direct limits:
\begin{theorem}[Corollary \ref{cor: coh regular} below]
Let $R$ be a ring. Then every cyclic $R$-module is a direct limit of modules in $\mathsf{CFP}_\infty^{<\infty}(R)$ if and only if $R$ is left coherent regular.
\end{theorem}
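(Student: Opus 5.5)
Here $\mathsf{CFP}_\infty^{<\infty}(R)$ denotes the cyclic modules that lie in $\mathsf{FP}_\infty(R)$ and have finite projective dimension, i.e.\ cyclic modules admitting a finite resolution by finitely generated projectives. "Left coherent regular" means $R$ is left coherent and every finitely presented left module has finite projective dimension. The proof splits into the two implications.

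\emph{($\Leftarrow$)} Suppose $R$ is left coherent regular and let $R/I$ be cyclic.
\begin{itemize}
\item Write $I=\varinjlim I_\alpha$ as the directed union of its finitely generated subideals. Then $R/I=\varinjlim R/I_\alpha$.
\item By coherence each $I_\alpha$ is finitely presented, so $R/I_\alpha$ is finitely presented.
\item Over a coherent ring $\mathsf{FP}_1(R)=\mathsf{FP}_\infty(R)$, so $R/I_\alpha\in\mathsf{FP}_\infty(R)$.
\item By regularity $\pd(R/I_\alpha)<\infty$.
\end{itemize}
Hence each $R/I_\alpha\in\mathsf{CFP}_\infty^{<\infty}(R)$, and this direction is routine.

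\emph{($\Rightarrow$)} Suppose every cyclic module is a direct limit of modules in $\mathsf{CFP}_\infty^{<\infty}(R)$. The key tool is that a finitely presented module $M$ written as $M=\varinjlim M_\alpha$ makes $\mathrm{id}_M$ factor through some $M_\alpha$, so $M$ is a direct summand of $M_\alpha$.

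\textbf{Regularity.} Let $J$ be a finitely generated left ideal. Then $R/J$ is finitely presented and a direct limit of modules in $\mathsf{CFP}_\infty^{<\infty}(R)$. Hence $R/J$ is a summand of some $C\in\mathsf{CFP}_\infty^{<\infty}(R)$. This gives:
\begin{itemize}
\item $\pd(R/J)\le\pd(C)<\infty$;
\item $R/J\in\mathsf{FP}_\infty(R)$, since $\mathsf{FP}_\infty(R)$ is closed under direct summands.
\end{itemize}
In particular $J$ is finitely presented for every finitely generated $J$, so $R$ is left coherent. Regularity then follows: over a coherent ring every finitely presented module has a finite filtration with cyclic finitely presented factors $R/J$. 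One gets this by induction on the number of generators, using that quotients of finitely presented modules by finitely generated submodules stay finitely presented. Finite projective dimension is preserved under extensions, so every finitely presented module has finite projective dimension.

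\textbf{Main obstacle.} The delicate step is making sure coherence is really obtained, and not just assumed before regularity is invoked. The argument above derives it directly, from the fact that each $R/J$ with $J$ finitely generated lies in $\mathsf{FP}_\infty\subseteq\mathsf{FP}_2$. If the paper's definition of "coherent regular" only asks that finitely generated ideals have finite projective dimension, the filtration step is unnecessary.

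An alternative to the summand argument is to pass through $\Tor$. Since $\Tor$ commutes with direct limits, each cyclic $R/I$ is a direct limit of modules of finite flat dimension. However, the bounds on $\pd(C_\alpha)$ need not be uniform, which is why I would use the finitely presented summand argument instead.
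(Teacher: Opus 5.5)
Your proof is correct and follows the paper's route: you write a finitely presented $R/J$ as a direct limit, split off $R/J$ as a summand of some term to get both coherence (as in Theorem \ref{thm: coh regular}) and finite projective dimension, and for the converse you write $R/I$ as the direct limit of the $R/I_\alpha$ over finitely generated subideals $I_\alpha$. Your filtration step proves the standard fact the paper only recalls, namely that over a left coherent ring, finite projective dimension of every $R/J$ with $J$ finitely generated already gives left regularity; note that this step also uses that, over a coherent ring, finitely generated submodules of finitely presented modules are finitely presented, and this is what makes the cyclic factors finitely presented.
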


A natural question is whether these results admit a common generalization. This was one of the motivations of \cite{EP22}, where the authors considered $n$-coherent rings, namely rings for which every finitely $n$-presented module is finitely $(n+1)$-presented, or equivalently, $\mathsf{FP}_n(R)=\mathsf{FP}_{\infty}(R)$. Adding the corresponding regularity condition, namely $\mathsf{FP}_n(R) = \mathsf{FP}_n^{<\infty}(R)$, gives rise to the notions of $n$-coherent and $n$-regular rings. For such rings, one has $\mathsf{K}_i(R)\cong\mathsf{K}_i(\mathsf{FP}_n(R))\cong\mathsf{K}_i(\mathsf{FP}_{\infty}(R))$ for all $i\ge 0$. However, the behavior of negative $\mathsf{K}$-groups is different: \cite{PA25} provides an example of a $2$-coherent and $2$-regular ring that is $\mathsf{K}_0$-regular yet satisfies $\mathsf{K}_{-1}(R)\neq 0$.

Up to this point, the standard approach has required combining two separated requirements: a finiteness condition on the ring and a bound on the projective dimension of certain classes of modules. The main objective of this paper is to study a unified class of rings that incorporates both aspects into a single framework depending on two integer parameters. Our strategy is further motived by the notion of $(n,d)$-coherent rings introduced by Mao and Ding \cite{MD07}: for $n\in\mathbb{N}^*$ and $d\in\mathbb{N}^*\cup{\infty}$, a ring $R$ is called \newterm{$(n,d)$-coherent} if every finitely $n$-presented $R$-module of projective dimension at most $d$ is finitely $(n+1)$-presented. This framework simultaneously controls the relevant finiteness and projective dimension conditions and provides a natural setting in which to study their interaction with algebraic $\mathsf{K}$-theory.

\begin{theorem}[Proposition \ref{prop: k} below]
Let $R$ be a left $(n,d)$-coherent ring, with $n,d\in\mathbb{N}^*$. Then $\mathsf{K}_i(R)\cong \mathsf{K}_i(\mathsf{FP}_n^{\leq d}(R))$ for all $i\geq0$.
\end{theorem}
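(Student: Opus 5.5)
The plan is to apply Quillen's Resolution Theorem to the inclusion $\mathsf{proj}(R)\hookrightarrow\mathsf{FP}_n^{\leq d}(R)$. Here $\mathsf{FP}_n^{\leq d}(R)$ is the full subcategory of finitely $n$-presented left $R$-modules of projective dimension at most $d$. Finitely generated projectives have projective dimension $0\le d$ and are in $\mathsf{FP}_\infty(R)\subseteq\mathsf{FP}_n(R)$, so the inclusion makes sense. Since $\mathsf{K}_i(R)=\mathsf{K}_i(\mathsf{proj}(R))$, it is enough to verify three hypotheses of the Resolution Theorem:
\begin{enumerate}[label=(\roman*)]
\item $\mathsf{FP}_n^{\leq d}(R)$ is an exact category, namely extension-closed in $\lMod{R}$;
\item $\mathsf{proj}(R)$ is closed under extensions and under kernels of admissible epimorphisms in $\mathsf{FP}_n^{\leq d}(R)$;
\item every object of $\mathsf{FP}_n^{\leq d}(R)$ has a finite resolution by objects of $\mathsf{proj}(R)$.
\end{enumerate}

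For (i), extensions of finitely $n$-presented modules are finitely $n$-presented, by the horseshoe lemma or the standard closure properties of $\mathsf{FP}_n(R)$ presumably recorded earlier in the paper. Projective dimension at most $d$ is preserved under extensions by the long exact $\Ext$ sequence. For (ii), a short exact sequence ending in a projective splits, so extensions and kernels of epimorphisms between finitely generated projectives stay in $\mathsf{proj}(R)$.

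The main step is (iii), and this is where $(n,d)$-coherence is used. Take $M\in\mathsf{FP}_n^{\leq d}(R)$. Since $R$ is left $(n,d)$-coherent, $M$ is finitely $(n+1)$-presented, and iterating gives $M\in\mathsf{FP}_\infty(R)$. To justify the iteration, first choose an epimorphism $P_0\to M$ with $P_0\in\mathsf{proj}(R)$ and kernel $K_1$. Then $K_1$ is finitely $(n-1)$-presented, and in fact finitely $n$-presented because $M$ is $(n+1)$-presented. Also $\pd K_1\le\max(d-1,0)\le d$. Hence $K_1\in\mathsf{FP}_n^{\leq d}(R)$, and coherence applies to it again. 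By induction we obtain a resolution $\cdots\to P_1\to P_0\to M\to 0$ by finitely generated projectives, in which every syzygy $K_j$ lies in $\mathsf{FP}_n^{\leq d}(R)$.

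Since $\pd M\le d$, the syzygy $K_d$ is projective by the standard dimension-shifting argument. It is also finitely generated, so $K_d\in\mathsf{proj}(R)$. Truncating gives a finite resolution $0\to K_d\to P_{d-1}\to\cdots\to P_0\to M\to0$ in $\mathsf{proj}(R)$.

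The Resolution Theorem then gives $\mathsf{K}_i(\mathsf{proj}(R))\cong\mathsf{K}_i(\mathsf{FP}_n^{\leq d}(R))$ for all $i\ge0$. The only delicate point I expect is the bookkeeping in the syzygy step, namely checking that each $K_j$ remains finitely $n$-presented so that coherence can be reapplied. This follows from the fact that $M$ is finitely $(n+1)$-presented, so its first syzygy with respect to a finitely generated free cover is finitely $n$-presented.
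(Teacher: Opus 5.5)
Your proof is correct and takes the paper's route: the paper likewise applies the Resolution Theorem to $\mathsf{proj}(R)\hookrightarrow\mathsf{FP}_n^{\le d}(R)$, using $(n,d)$-coherence in the form $\mathsf{FP}_n^{\le d}(R)=\mathsf{FP}_\infty^{\le d}(R)$ (Proposition~\ref{prop:equiv}), and your syzygy induction proves that equality inline. You also check that every syzygy stays in $\mathsf{FP}_n^{\le d}(R)$, so that the finite resolution is exact in that exact category; the paper leaves this point implicit.
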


Another contribution of this work is a unified approach to finiteness conditions in homological algebra via the classes  $\mathsf{FP}_n^{\leq d}(R)$. These classes naturally induce the notions of $\mathsf{FP}_n^{\le d}$-injective and $\mathsf{FP}_n^{\le d}$-flat modules, defined by the vanishing of $\Ext_R^1(M,-)$ and $\Tor_1^R(-,M)$, respectively, for all $M \in \mathsf{FP}_n^{\le d}(R)$. Using this framework, we characterize $(n,d)$-coherent rings in terms of the relative injective, projective, flat, and cotorsion modules. Moreover, if $d\ge\gD(R)$ or $d=\infty$, then $(n,d)$-coherence coincides with $n$-coherence, and the corresponding relative notions recover those previously studied in the literature; see \cite{Zhou04, Zhu, BP, Zhu17, gp2}. 

The paper is organized as follows. In Section \ref{s:Finiteness conditions in modules of bounded projective dimension}, we investigate the closure properties of the class of finitely $n$-presented 
modules of bounded projective dimension. We also introduce the notion of $(n,d)$-coherent modules. When $d\ge\gD(R)$ or $d=\infty$, this notion coincides with that of $n$-coherent modules 
studied in \cite{DKM97}. Section \ref{s:$(n,d)$-coherent rings} is devoted to the study of 
$(n,d)$-coherent rings. We establish several characterizations in terms 
of finitely $n$-presented modules of projective dimension at most $d$ 
as well as in terms of $(n,d)$-coherent modules. We further examine the behavior 
of the $(n,d)$-coherence property under various constructions, including 
surjective ring homomorphisms and skew group rings. In Section \ref{s: k}, we investigate the connections between $(n,d)$-coherence and algebraic $\mathsf{K}$-theory, with particular emphasis on coherent regular rings.  Finally, in Sections \ref{s:inj and proj relativos} and \ref{s:planos y cotorsion relativos}, we introduce and study the classes of $\mathsf{FP}_n^{\le d}$-injective, $\mathsf{FP}_n^{\le d}$-projective, 
$\mathsf{FP}_n^{\le d}$-flat, and $\mathsf{FP}_n^{\le d}$-cotorsion modules over an arbitrary ring. We then employ these relative homological notions to obtain further characterizations of $(n,d)$-coherent rings.

\textbf{Conventions and Notation:} Throughout this article, $R$ denotes an associative ring with identity, $\lMod R$ the category of left $R$-modules, and $R^{\op}$ the opposite ring of $R$. Right $R$-modules are identified with left $R^{\op}$-modules. Unless otherwise specified, the terms “module” and “ideal” mean left $R$-module and left ideal, respectively. We write $\mathbb{N}^* := \mathbb{N} \setminus \{0\}$ for the set of positive integers. Monomorphisms and epimorphisms are denoted by 
$\rightarrowtail$ and $\twoheadrightarrow$, respectively. For $\mathcal{X} \subseteq \lMod R$, we define
$$\mathcal{X}^{\perp_1} \coloneqq \{M\in\lMod R \mid\Ext^1_R(X, M)=0 \text{ for all } X \in \mathcal{X} \},$$
$$\mathcal{X}^{\perp_{\geq i}} \coloneqq \{ M\in \lMod R \mid\Ext^j_R(X, M)=0 \text{ for all } X\in\mathcal{X},\, j \geq i \},$$
and set $\mathcal{X}^{\perp} \coloneqq \mathcal{X}^{\perp_{\geq 1}}$. The classes $ {}^{\perp_1}\mathcal{Y}$, ${}^{\perp_{\geq i}}\mathcal{Y}$, and ${}^{\perp}\mathcal{Y}$ for $ \mathcal{Y}\subseteq \lMod R$ are defined in the same way.  Let $\mathcal{X} \subseteq \lMod{R^{\op}}$. Define
$$\mathcal{X}^{\top_1}:= \{M\in\lMod R\mid\Tor^R_1(X,M)=0 \text{ for all } X \in \mathcal{X}\},$$
and, for $i \ge 1$, $ \mathcal{X}^{\top_{\ge i}}:=\{ M\in\lMod R\mid\Tor^R_j(X,M)=0 \text{ for all } X\in\mathcal{X}\text{ and all } j \ge i\}.$ For brevity, we set $\mathcal{X}^{\top}:=\mathcal{X}^{\top_{\ge 1}}$. 

\section{Finitely $n$-presented modules of bounded projective dimension and $(n,d)$-coherent modules}\label{s:Finiteness conditions in modules of bounded projective dimension} 

In what follows, for each $d\in\mathbb{N}$, we denote by $\mathsf{P}^{\le d}(R)$ the class of all $R$-modules of projective dimension at most $d$. This class is closed under direct summands and is \newterm{resolving}; that is, it contains all projective modules and is closed under extensions and kernels of epimorphisms in $\lMod R$. We write $\mathsf{P}^{<\infty}(R):=\bigcup_{d\in\mathbb{N}}\mathsf{P}^{\le d}(R)$ for the class of $R$-modules of finite projective dimension. This class is \newterm{thick}; that is, it is closed under direct summands and satisfies the two-out-of-three property for short exact sequences.

Let $n\in\mathbb{N}$. An $R$-module $M$ is called \newterm{finitely $n$-presented} if there exists an exact sequence $F_n\to F_{n-1}\to\cdots\to F_1\to F_0\twoheadrightarrow M$, where each $F_i$ is a finitely generated projective (equivalently, free) $R$-module for $0\le i\le n$. Such a sequence is is called a \newterm{finite $n$-presentation} of $M$. The class of all finitely $n$-presented $R$-modules is denoted by $\mathsf{FP}_n(R)$. In particular, $\mathsf{FP}_0(R)$ is the class of all finitely generated $R$-modules while $\mathsf{FP}_1(R)$ is the class of all finitely presented $R$-modules. Moreover, the class $\mathsf{FP}_\infty(R):=\bigcap_{n \ge 0} \mathsf{FP}_n(R)$ consists of all $R$-modules admitting a projective resolution by finitely generated modules. Modules in $\mathsf{FP}_\infty(R)$ are called \newterm{finitely $\infty$-presented} $R$-modules. Clearly, $\mathsf{proj}(R)\subseteq \mathsf{FP}_\infty(R),$ where $\mathsf{proj}(R)$ denotes the class of finitely generated projective $R$-modules. For convenience, we also set $\mathsf{FP}_{-1}(R):=\lMod R$. These classes form the descending chain $\mathsf{proj}(R)\subseteq \mathsf{FP}_\infty(R) \subseteq\cdots\subseteq\mathsf{FP}_n(R)\subseteq\cdots\subseteq \mathsf{FP}_1(R)\subseteq\mathsf{FP}_0(R)$. The classes $\mathsf{FP}_n(R)$ satisfy several closure properties. In particular, $\mathsf{FP}_n(R)$ is closed under extensions, direct summands, and cokernels of monomorphisms between its objects. In contrast, $\mathsf{FP}_\infty(R)$ is a thick class; see \cite[Prop. 1.7, Thm. 1.8]{BP}. We now consider finitely $n$-presented modules of bounded projective dimension. For $n,d\in\mathbb{N}$, set
$$\mathsf{FP}_n^{\le d}(R):=\mathsf{FP}_n(R)\cap\mathsf{P}^{\le d}(R),
\qquad\mathsf{FP}_\infty^{\le d}(R):=\mathsf{FP}_\infty(R)\cap\mathsf{P}^{\le d}(R).$$
For modules of finite projective dimension, set
$$\mathsf{FP}_n^{<\infty}(R):=\mathsf{FP}_n(R)\cap\mathsf{P}^{<\infty}(R),
\qquad \mathsf{FP}_\infty^{<\infty}(R):=\mathsf{FP}_\infty(R)\cap \mathsf{P}^{<\infty}(R).$$
When $d=\infty$, we adopt the convention
$$\mathsf{FP}_n^{\leq\infty}(R):=\mathsf{FP}_n(R),
\qquad \mathsf{FP}_\infty^{\leq\infty}(R):=\mathsf{FP}_\infty(R).$$

For the purposes of this paper, the following table summarizes the closure properties of the
classes under consideration.

\begin{table}[h]
\begin{tabular}{lcccc}
\hline
Class & Extensions & Direct summands & Kernels of epis. & Cokernels of monos.\\
\hline
$\mathsf{FP}_n^{\leq d}(R)$ & $\checkmark$ & $\checkmark$ &  &\\
$\mathsf{FP}_n^{\leq d}(R)$, $d\leq n$ & $\checkmark$ & $\checkmark$ & $\checkmark$ &\\
$\mathsf{FP}_n^{<\infty}(R)$ & $\checkmark$ & $\checkmark$ & & $\checkmark$\\
$\mathsf{FP}_\infty^{\leq d}(R)$ & $\checkmark$ & $\checkmark$ & $\checkmark$ &\\
$\mathsf{FP}_\infty^{<\infty}(R)$ & $\checkmark$ & $\checkmark$ & $\checkmark$ & $\checkmark$\\
\hline
\end{tabular}
\end{table}

Following Dobbs, Kabbaj, and Mahdou \cite{DKM97}, for each $n\in\mathbb{N}^*$, an $R$-module $M$ is called \newterm{$n$-coherent} if $M\in\mathsf{FP}_n(R)$ and every finitely $(n-1)$-presented submodule of $M$ is finitely $n$-presented. Motivated by this notion, we introduce the following definition.

\begin{dfn}\label{dfn: mod coh}
Let $n\in\mathbb{N}^*$ and $d\in\mathbb{N}^*\cup\{\infty\}$. An $R$-module
$M$ is called \newterm{$(n,d)$-coherent} if $M\in\mathsf{FP}_n(R)$ and every
finitely $(n-1)$-presented submodule $N$ of $M$ with
$\pd_R(N)\le d-1$ is finitely $n$-presented.
\end{dfn}

We denote by $\mathsf{(n,d)\text{-}Coh}(R)$ the class of all $(n,d)$-coherent
$R$-modules. When $d=\infty$, this notion coincides with $n$-coherence
in the sense of \cite{DKM97}. Moreover, the following closure property holds:
if $M$ is $(n,d)$-coherent and $N\subseteq M$ is finitely $(n-1)$-presented with $\pd_R(N)\le d-1$, then
$N$ itself is $(n,d)$-coherent.

\begin{theorem}\label{theo: (n,d)-mod}
Let $n\in\mathbb{N}^*$ and $d\in\mathbb{N}^*\cup\{\infty\}$.  
Consider a short exact sequence of $R$-modules $
P \overset{f}{\rightarrowtail} N\overset{g}{\twoheadrightarrow} M$,
where $N$ is $(n,d)$-coherent. Then the following hold:
\begin{enumerate}\renewcommand{\labelenumi}{(\alph{enumi})}
\item If $P\in\mathsf{FP}_{n-1}^{\le d-1}(R)$, then both $M$ and $P$ are $(n,d)$-coherent.
\item If $M\in\mathsf{FP}_n(R)$ and $\pd_R(P)\le d-1$, then $P$ is $(n,d)$-coherent.
\end{enumerate}
\end{theorem}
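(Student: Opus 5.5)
The plan is to reduce everything to the definition of $(n,d)$-coherence for $N$, the closure remark stated right after Definition~\ref{dfn: mod coh}, and the standard two-out-of-three behaviour of the classes $\mathsf{FP}_k(R)$ along a short exact sequence $A\rightarrowtail B\twoheadrightarrow C$ (see \cite[Prop. 1.7]{BP}). The two facts I need are:
\begin{enumerate}\renewcommand{\labelenumi}{(\roman{enumi})}
\item if $A\in\mathsf{FP}_{k-1}(R)$ and $B\in\mathsf{FP}_k(R)$, then $C\in\mathsf{FP}_k(R)$;
\item if $B,C\in\mathsf{FP}_k(R)$, then $A\in\mathsf{FP}_{k-1}(R)$.
\end{enumerate}
I also use that $\mathsf{FP}_k(R)$ is closed under extensions, and that $\pd$ of a middle term is at most the maximum of the outer ones. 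When $d=\infty$, the bounds $\le d-1$ are vacuous and the argument below runs verbatim.

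\emph{Part (b).} Since $N$ is $(n,d)$-coherent, $N\in\mathsf{FP}_n(R)$; together with $M\in\mathsf{FP}_n(R)$, fact (ii) gives $P\in\mathsf{FP}_{n-1}(R)$. Identifying $P$ with the submodule $f(P)\subseteq N$, it is finitely $(n-1)$-presented with $\pd_R(P)\le d-1$. Hence $P\in\mathsf{FP}_n(R)$ by the $(n,d)$-coherence of $N$. Now the closure remark applies: $P$ is $(n,d)$-coherent, since any finitely $(n-1)$-presented submodule of $P$ of projective dimension $\le d-1$ is such a submodule of $N$.

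\emph{Part (a).} The statement for $P$ is exactly the closure remark, since $P\subseteq N$ lies in $\mathsf{FP}_{n-1}^{\le d-1}(R)$. In particular $P\in\mathsf{FP}_n(R)$, although fact (i) with $k=n$ only needs $P\in\mathsf{FP}_{n-1}(R)$ and $N\in\mathsf{FP}_n(R)$ to give $M\in\mathsf{FP}_n(R)$.

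Now let $L\subseteq M$ be finitely $(n-1)$-presented with $\pd_R(L)\le d-1$, and form the preimage $L':=g^{-1}(L)\subseteq N$. This gives a short exact sequence $P\rightarrowtail L'\twoheadrightarrow L$. Here $L'\in\mathsf{FP}_{n-1}(R)$ by closure under extensions, and
\[
\pd_R(L')\le\max\{\pd_R(P),\pd_R(L)\}\le d-1 .
\]
Hence $L'\in\mathsf{FP}_n(R)$ by the $(n,d)$-coherence of $N$. Applying fact (i) to $P\rightarrowtail L'\twoheadrightarrow L$ with $k=n$ gives $L\in\mathsf{FP}_n(R)$, so $M$ is $(n,d)$-coherent.

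The only delicate point is this pullback step. One must check that the preimage stays in both the finiteness class and the projective-dimension bound, and that the cokernel fact (i) uses only $P\in\mathsf{FP}_{n-1}(R)$. Both reduce to the cited closure properties, so I expect no real obstacle beyond bookkeeping of indices.
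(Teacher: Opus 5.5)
Your proposal is correct and follows essentially the same route as the paper. For (a), the paper likewise pulls a submodule $M_1\subseteq M$ back to $g^{-1}(M_1)\subseteq N$, uses the $(n,d)$-coherence of $N$, and passes to the quotient, while the statement for $P$ and part (b) come from the closure remark; you simply make explicit the checks the paper leaves implicit, namely $g^{-1}(M_1)\in\mathsf{FP}_{n-1}^{\le d-1}(R)$ via extension closure and the projective-dimension bound, and $P\in\mathsf{FP}_{n-1}(R)$ in (b).
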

\begin{proof}~\
\begin{itemize}
\item For part \textbf{(a)}, we first prove that $M$ is $(n,d)$-coherent. Clearly, $M\in\mathsf{FP}_n(R)$. Let $M_1$ be a finitely $(n-1)$-presented submodule of $M$ with $\pd_R(M_1) \le d-1$. Consider the
short exact sequence $P\overset{f}{\rightarrowtail} g^{-1}(M_1)\overset{g}{\twoheadrightarrow} M_1$. Since $g^{-1}(M_1)\subseteq N$ and $N$ is $(n,d)$-coherent, we conclude that $g^{-1}(M_1)$ is finitely $n$-presented. Consequently, $M_1$ is finitely $n$-presented. The fact that $P$ is $(n,d)$-coherent follows from the preceding comments.
\item Part \textbf{(b)} is immediate. 
\end{itemize}
\end{proof}

When $d=\infty$, we recover \cite[Thm. 2.3]{DKM97}. Moreover, the same argument as in the proof of \cite[Thm. 2.4]{DKM97} yields the following corollary.

\begin{cor}
Let $n\in\mathbb{N}^*$ and $d \in \mathbb{N}^* \cup \{\infty\}$.  
Let $M_0\xrightarrow{f_1} M_1 \xrightarrow{f_2}\cdots\xrightarrow{f_n} M_n$
be an exact sequence of $(n,d)$-coherent $R$-modules such that
$\pd_R(\Img(f_i))\le d-1$ for all $i=1,\ldots,n$. Then $\Img(f_i)$ and $\Coker(f_i)$ are $(n,d)$-coherent for every $i=1,\ldots,n$.
\end{cor}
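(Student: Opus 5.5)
The argument follows that of \cite[Thm. 2.4]{DKM97}, with Theorem~\ref{theo: (n,d)-mod} doing all the work. For each $i=1,\ldots,n$ we have short exact sequences
\[
\Img(f_i)\rightarrowtail M_i\twoheadrightarrow \Coker(f_i),\qquad \Ker(f_{i})\rightarrowtail M_{i-1}\twoheadrightarrow \Img(f_{i}),
\]
and exactness gives $\Ker(f_{i+1})=\Img(f_i)$. The plan is to show that each $\Img(f_i)$ lies in $\mathsf{FP}_{n-1}^{\le d-1}(R)$ and then apply part (a) of Theorem~\ref{theo: (n,d)-mod} to the first sequence (with $N=M_i$, $P=\Img(f_i)$). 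This yields at once that both $\Img(f_i)$ and $\Coker(f_i)$ are $(n,d)$-coherent. The projective dimension bound $\pd_R(\Img(f_i))\le d-1$ is a hypothesis, so the only thing to prove is that $\Img(f_i)\in\mathsf{FP}_{n-1}(R)$.

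We get this finiteness by a descending induction on the presentation level, moving along the sequence. The basic observation is the following. If $\Ker(f_i)\in\mathsf{FP}_{k}(R)$, then the second short exact sequence, together with $M_{i-1}\in\mathsf{FP}_n(R)$ and the standard closure property (in $A\rightarrowtail B\twoheadrightarrow C$, if $A\in\mathsf{FP}_{k}$ and $B\in\mathsf{FP}_{k+1}$ then $C\in\mathsf{FP}_{k+1}$, see \cite{BP}), gives $\Img(f_i)\in\mathsf{FP}_{\min(k+1,n)}(R)$. Moreover, once $\Img(f_i)$ is known to be finitely $(n-1)$-presented with $\pd\le d-1$, part (a) makes $\Img(f_i)$ itself $(n,d)$-coherent, hence in $\mathsf{FP}_n(R)$.

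The induction runs as follows. We start from $\Img(f_1)$, which is a quotient of $M_0\in\mathsf{FP}_n$ and so lies in $\mathsf{FP}_0$. In general, if $\Img(f_i)=\Ker(f_{i+1})\in\mathsf{FP}_k(R)$, then $\Img(f_{i+1})\in\mathsf{FP}_{k+1}(R)$. Hence $\Img(f_i)\in\mathsf{FP}_{i-1}(R)$, and in particular $\Img(f_n)\in\mathsf{FP}_{n-1}^{\le d-1}(R)$. Applying part (a) to $\Img(f_n)\rightarrowtail M_n\twoheadrightarrow \Coker(f_n)$ shows that $\Img(f_n)$ and $\Coker(f_n)$ are $(n,d)$-coherent, and in particular $\Img(f_n)\in\mathsf{FP}_n(R)$.

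We then go back down the sequence. Apply part (b) of Theorem~\ref{theo: (n,d)-mod} to $\Ker(f_n)\rightarrowtail M_{n-1}\twoheadrightarrow\Img(f_n)$. Here $\Img(f_n)\in\mathsf{FP}_n(R)$ and $\Ker(f_n)=\Img(f_{n-1})$ has $\pd\le d-1$, so $\Img(f_{n-1})$ is $(n,d)$-coherent. Part (a) applied to $\Img(f_{n-1})\rightarrowtail M_{n-1}\twoheadrightarrow\Coker(f_{n-1})$ then gives that $\Coker(f_{n-1})$ is $(n,d)$-coherent as well. Iterating this step down to $i=1$ covers all indices.

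The main obstacle is the bookkeeping in the upward pass, namely checking that the closure properties of $\mathsf{FP}_k$ really give $\Img(f_i)\in\mathsf{FP}_{i-1}$, so that $\mathsf{FP}_{n-1}$ is reached exactly at the last index $n$. That is why the sequence needs $n+1$ terms. The downward pass via part (b) then avoids needing any finiteness at the earlier indices directly.
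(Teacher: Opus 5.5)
Your proof is correct and is essentially the argument the paper intends: the paper gives no details beyond invoking the proof of \cite[Thm. 2.4]{DKM97}, which is this same two-pass scheme. First an upward induction gives $\Img(f_i)\in\mathsf{FP}_{i-1}(R)$, so Theorem~\ref{theo: (n,d)-mod}(a) applies to $\Img(f_n)\rightarrowtail M_n\twoheadrightarrow\Coker(f_n)$; then a downward pass uses part (b) along $\Ker(f_i)=\Img(f_{i-1})$, followed by part (a) for each cokernel.
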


Applying Definition \ref{dfn: mod coh} to the $R$-module $R$ leads to the following observation.

\begin{rmk}
Let $R$ be a ring, $n\in\mathbb{N}^*$, and $d\in\mathbb{N}^*\cup{\infty}$. We denote by $\mathsf{CFP}_n^{\leq d}(R)$ the class of all cyclic $R$-modules in $\mathsf{FP}_n^{\le d}(R)$.
\begin{enumerate}\renewcommand{\labelenumi}{(\alph{enumi})}
\item We say that $R$ is left \newterm{weakly $(n,d)$-coherent} if every  finitely $(n-1)$-presented ideal $I$ of $R$ satisfying $\pd_R(I)\le d-1$ is finitely $n$-presented. Equivalently, $\mathsf{CFP}_n^{\le d}(R)\subseteq \mathsf{FP}_{n+1}(R)$. When $d=\infty$ (with the convention $\mathsf{CFP}_n^{\le\infty}(R)=\mathsf{CFP}_n(R)$), this recovers the usual notion of left weakly $n$-coherent ring. For $n\ge 2$, it is not known whether this condition is equivalent to left $n$-coherence. Recall that a ring $R$ is left \newterm{$n$-coherent} if every finitely $n$-presented $R$-module is finitely $(n+1)$-presented.
\item If $R$ is left weakly $(n,d)$-coherent, then $\mathsf{CFP}_n^{\le d}(R)\subseteq (n,d)\text{-}\mathsf{Coh}(R)$. When $d=\infty$, this recovers \cite[App. 2.10]{DKM97}.
\item By \cite[Thm. 2.3]{B10}, applied to the class $\mathcal{X}=\mathsf{CFP}_n^{\le d}(R)$, the condition in (a) is equivalent to each of the following:
\begin{enumerate}
\item[(i)] For every set $J$ and every $M\in\mathsf{CFP}_n^{\le d}(R)$, the canonical morphism
$R^J\otimes_R M\to M^J$ is bijective, and $\Tor_i^R(R^J,M)=0$ for all $1\leq i\leq n$.
\item[(ii)] For every family $(P_j)_{j\in J}$ of $R^{\op}$-modules and every $M\in\mathsf{CFP}_n^{\le d}(R)$, the canonical morphism
$\prod_{j\in J}\Tor_i^R(P_j,M)\to
\Tor_i^R\left(\prod_{j\in J}P_j,M\right)$
is bijective for all $i\leq n$.
\item[(iii)] For every $M\in\mathsf{CFP}_n^{\le d}(R)$ and every directed system $(N_j)_{j\in J}$ of $R$-modules, the canonical morphism $\varinjlim\Ext_R^i(M,N_j)\to \Ext_R^i\left(M,\varinjlim N_j\right)$
is bijective for all $i\leq n$.
\end{enumerate}
\item A more detailed treatment of the case $n=1$ is given in \cite{PP26}. 
\end{enumerate}
\end{rmk}

\section{$(n,d)$-coherent rings}\label{s:$(n,d)$-coherent rings}

According to Mao and Ding \cite[Def. 4.1]{MD07}, a ring $R$ is left \newterm{$(n,d)$-coherent}, where $n\in\mathbb{N}^*$ and $d\in\mathbb{N}^*\cup{\infty}$, if every finitely $n$-presented $R$-module of projective dimension at most $d$ is finitely $(n+1)$-presented. Following Lee \cite{Lee02}, a ring $R$ is said to be left \newterm{Lee $d$-coherent} if every finitely generated submodule of a free $R$-module with projective dimension at most $d-1$ is finitely presented. A natural extension of this condition to $n\ge 2$ is given by requiring that every finitely $(n-1)$-presented submodule of a free $R$-module with projective dimension at most $d-1$ be finitely $n$-presented. The next proposition shows that this condition is equivalent to left $(n,d)$-coherence.

\begin{prop}\label{prop: car1}
Let $R$ be a ring, $n\in\mathbb{N}^*$, and $d\in \mathbb{N}^*\cup\{\infty\}$.
Then the following statements are equivalent:
\begin{enumerate}
    \item[(1)] $R$ is left $(n,d)$-coherent.
    \item[(2)] Every finitely $(n-1)$-presented submodule of a free $R$-module with projective dimension at most $d-1$ is finitely $n$-presented.
\end{enumerate}
\end{prop}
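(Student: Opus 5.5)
The plan is to prove both implications directly, by dimension shifting along the short exact sequence that presents a module as a quotient of a free module. The only care needed is in choosing the free module correctly.

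\textbf{(1)$\Rightarrow$(2).} Let $F$ be free and let $K\subseteq F$ be finitely $(n-1)$-presented with $\pd_R(K)\le d-1$. Since $K$ is finitely generated, $K$ lies in a finitely generated free direct summand $F_0$ of $F$: take $F_0$ spanned by the finitely many basis elements occurring in a finite generating set of $K$. Put $M:=F_0/K$. From the sequence $K\rightarrowtail F_0\twoheadrightarrow M$ we get $M\in\mathsf{FP}_n(R)$, by splicing a finite $(n-1)$-presentation of $K$ with $F_0$. We also get $\pd_R(M)\le \pd_R(K)+1\le d$, with the obvious meaning when $d=\infty$. By (1), $M\in\mathsf{FP}_{n+1}(R)$. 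The standard fact that the kernel of an epimorphism from a finitely generated projective onto a finitely $(n+1)$-presented module is finitely $n$-presented then gives $K\in\mathsf{FP}_n(R)$.

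\textbf{(2)$\Rightarrow$(1).} Let $M\in\mathsf{FP}_n(R)$ with $\pd_R(M)\le d$, and choose an epimorphism $F_0\twoheadrightarrow M$ with $F_0$ finitely generated free and kernel $K$. Because $M$ is finitely $n$-presented, $K$ is finitely $(n-1)$-presented. Because $F_0$ is projective and $\pd_R(M)\le d$, we have $\pd_R(K)\le d-1$; if $M$ happens to be projective then $K$ is projective too, so this bound still holds. Thus $K$ is a submodule of a free module satisfying the hypotheses of (2), and so $K\in\mathsf{FP}_n(R)$. Splicing a finite $n$-presentation of $K$ with $F_0\twoheadrightarrow M$ yields a finite $(n+1)$-presentation of $M$, so $M\in\mathsf{FP}_{n+1}(R)$.

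\textbf{Main point of care.} The only delicate step is in (1)$\Rightarrow$(2): the ambient free module $F$ may have infinite rank, so we must first pass to the finitely generated free summand $F_0$ containing $K$ before forming the quotient. Otherwise $F/K$ would not be finitely presented. The other ingredient is the standard relation between $\mathsf{FP}_{k}$-membership of a module and $\mathsf{FP}_{k-1}$-membership of its first syzygy with respect to a finitely generated projective cover. This is routine: it follows from Schanuel's lemma, or from the closure properties recalled from \cite{BP}.
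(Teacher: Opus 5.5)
Your proposal is correct and follows essentially the same route as the paper's proof: both directions pass through the short exact sequence $K\rightarrowtail F_0\twoheadrightarrow M$ with $F_0$ finitely generated free, and use the standard shift between $\mathsf{FP}_{k}$-membership of $M$ and $\mathsf{FP}_{k-1}$-membership of its syzygy $K$. Your reduction to a finitely generated free summand containing $K$, and your check of the case where $M$ is projective, spell out what the paper compresses into ``without loss of generality, finitely generated'' and ``$\pd_R(K)\le d-1$.''
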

\begin{proof} ~\
\begin{itemize}
\item (1) $\Rightarrow$ (2): Let $K$ be a finitely $(n-1)$-presented submodule of a  free $R$-module $F$ (without loss of generality, finitely generated) with $\pd_R(K) \le d-1$. Then the quotient $F/K$ is finitely $n$-presented and satisfies $\pd_R(F/K) \le d$. By assumption, $F/K$ is finitely $(n+1)$-presented, which implies that $K$ is finitely $n$-presented.
\item (2) $\Rightarrow$ (1): Let $M$ be a finitely $n$-presented $R$-module with $\pd_R(M) \le d$. Then there exists a short exact sequence $K \rightarrowtail F \twoheadrightarrow M$,
where $F$ is a finitely generated free $R$-module and $K$ is finitely
$(n-1)$-presented. Since $\pd_R(M)\le d$, we have $\pd_R(K) \le d-1$. By $(2)$, $K$ is finitely $n$-presented, and hence $M$ is finitely $(n+1)$-presented. 
\end{itemize}
\end{proof}

In particular, $R$ is left Lee $d$-coherent if and only if it is left $(1,d)$-coherent. One of the aims of this paper is to extend the results of \cite{Lee02} to the case $n \ge 2$. Notre that if $d\le n$, then $\mathsf{FP}_n^{\le d}(R)\subseteq \mathsf{FP}_{\infty}(R)$. Consequently, every ring $R$ is left $(n,d)$-coherent in this case. Thus, throughout the paper, we assume $d>n$. 

\begin{prop}
Let $R$ be a ring, $n\in \mathbb{N}^*$, and $d\in \mathbb{N}^* \cup \{\infty\}$.
Then the following statements are equivalent:
\begin{enumerate}
\item[(1)] $R$ is left $(n,d)$-coherent.
\item[(3)] Every finitely $n$-presented $R$-module of projective dimension at most $d$ is $(n,d)$-coherent.
\item[(4)] Every finitely generated projective $R$-module is $(n,d)$-coherent.
\item[(5)] Every finitely generated free $R$-module is $(n,d)$-coherent.
\end{enumerate}
\end{prop}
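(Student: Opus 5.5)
I would prove the cycle (1) $\Rightarrow$ (3) $\Rightarrow$ (4) $\Rightarrow$ (5) $\Rightarrow$ (1). The key input is Proposition \ref{prop: car1}, which says that (1) is equivalent to: every finitely $(n-1)$-presented submodule of a free module with projective dimension at most $d-1$ is finitely $n$-presented. Condition (5) says exactly this for submodules of finitely generated free modules. So the main work is to pass between ``submodule of a finitely generated free module'' and the conditions on the ring.

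\textbf{(1) $\Rightarrow$ (3).} Let $M\in\mathsf{FP}_n^{\le d}(R)$ and let $N\subseteq M$ be finitely $(n-1)$-presented with $\pd_R(N)\le d-1$. We must show $N\in\mathsf{FP}_n(R)$. Consider the short exact sequence $N\rightarrowtail M\twoheadrightarrow M/N$.
\begin{itemize}
\item Since $N\in\mathsf{FP}_{n-1}(R)$ and $M\in\mathsf{FP}_n(R)$, the quotient $M/N$ is finitely $n$-presented. This follows from the standard long-sequence/horseshoe argument: if $0\to A\to B\to C\to 0$ is exact with $A\in\mathsf{FP}_{n-1}$ and $B\in\mathsf{FP}_n$, then $C\in\mathsf{FP}_n$ (\cite[Prop. 1.7]{BP}).
\item From $\pd_R(M)\le d$ and $\pd_R(N)\le d-1$, the long exact $\Ext$ sequence gives $\pd_R(M/N)\le d$.
\item So $M/N\in\mathsf{FP}_n^{\le d}(R)$, and by (1) it lies in $\mathsf{FP}_{n+1}(R)$.
\item Since $M\in\mathsf{FP}_n(R)$ and $M/N\in\mathsf{FP}_{n+1}(R)$, the same closure lemma (kernel of an epimorphism from $\mathsf{FP}_n$ onto $\mathsf{FP}_{n+1}$ lies in $\mathsf{FP}_n$) gives $N\in\mathsf{FP}_n(R)$. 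Hence $M$ is $(n,d)$-coherent.
\end{itemize}

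\textbf{(3) $\Rightarrow$ (4) $\Rightarrow$ (5).} For (3) $\Rightarrow$ (4), a finitely generated projective module lies in $\mathsf{FP}_\infty(R)$ and has projective dimension $0\le d$, so (3) applies to it. The step (4) $\Rightarrow$ (5) is trivial, since finitely generated free modules are projective.

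\textbf{(5) $\Rightarrow$ (1).} I would verify condition (2) of Proposition \ref{prop: car1}, following its proof.
\begin{itemize}
\item Let $M\in\mathsf{FP}_n^{\le d}(R)$ and take $K\rightarrowtail F\twoheadrightarrow M$ with $F$ finitely generated free and $K\in\mathsf{FP}_{n-1}(R)$.
\item Since $\pd_R(M)\le d$, we get $\pd_R(K)\le d-1$. (If $M$ is projective, then $K$ is projective; otherwise dimension shifting gives the bound.)
\item Now $K$ is a submodule of $F$ of the type in Definition \ref{dfn: mod coh}, and $F$ is $(n,d)$-coherent by (5), so $K\in\mathsf{FP}_n(R)$.
\item Hence $M\in\mathsf{FP}_{n+1}(R)$, which is (1).
\end{itemize}
The original formulation (2) allows arbitrary free modules, but Proposition \ref{prop: car1}'s proof already reduces to the finitely generated case, so going through $M$ directly avoids that issue.

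The only step that is not immediate is the closure lemma used in (1) $\Rightarrow$ (3): the two-out-of-three index bookkeeping for $\mathsf{FP}_n$ in short exact sequences. I would cite \cite[Prop. 1.7]{BP}, or else prove it by the usual horseshoe construction comparing finite presentations.
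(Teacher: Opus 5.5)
Your proof is correct and follows the same cycle $(1)\Rightarrow(3)\Rightarrow(4)\Rightarrow(5)\Rightarrow(1)$ as the paper, with $(5)\Rightarrow(1)$ handled exactly as in the proof of Proposition \ref{prop: car1}. The only difference is that you spell out $(1)\Rightarrow(3)$, which the paper simply says follows from the definition. You do this via the bound $\pd_R(M/N)\le d$ and the standard behaviour of the classes $\mathsf{FP}_k(R)$ in short exact sequences.
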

\begin{proof}
The implication $(1)\Rightarrow(3)$ follows directly from the definition. The implications $(3)\Rightarrow (4)\Rightarrow(5)$ are immediate, and $(5)\Rightarrow(1)$ follows from Proposition \ref{prop: car1}.
\end{proof}

In particular, when $d\ge \gD(R)$ or $d=\infty$, we recover \cite[Prop. 2.5]{EP22}.

\begin{cor}\label{cor: coh}
Let $R$ be a ring, $n\in\mathbb{N}^*$, and $d\in\mathbb{N}^*\cup\{\infty\}$.
Then the following statements are equivalent:
\begin{enumerate}
\item[(1)] $R$ is left $(n,d)$-coherent.
\item[(6)] $\mathsf{FP}_n^{\le d}(R)= \mathsf{(n,d)}\text{-}\mathsf{Coh}(R)\cap\mathsf{P}^{\le d}(R)$.
\end{enumerate}
\end{cor}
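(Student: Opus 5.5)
The plan is to prove the two implications separately. The inclusion $\mathsf{(n,d)}\text{-}\mathsf{Coh}(R)\cap\mathsf{P}^{\le d}(R)\subseteq\mathsf{FP}_n^{\le d}(R)$ holds for every ring, since an $(n,d)$-coherent module lies in $\mathsf{FP}_n(R)$ by Definition \ref{dfn: mod coh}. So the content of (6) is the reverse inclusion $\mathsf{FP}_n^{\le d}(R)\subseteq \mathsf{(n,d)}\text{-}\mathsf{Coh}(R)$.

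\textbf{(1) $\Rightarrow$ (6).} I will invoke the preceding proposition, implication (1) $\Rightarrow$ (3): over a left $(n,d)$-coherent ring every module in $\mathsf{FP}_n^{\le d}(R)$ is $(n,d)$-coherent. Since such modules also lie in $\mathsf{P}^{\le d}(R)$, the reverse inclusion follows, and (6) holds.

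\textbf{(6) $\Rightarrow$ (1).} I will verify condition (5) of the preceding proposition, namely that every finitely generated free module $R^m$ is $(n,d)$-coherent. Clearly $R^m\in\mathsf{proj}(R)\subseteq\mathsf{FP}_n(R)$, and $\pd_R(R^m)=0\le d$, so $R^m\in\mathsf{FP}_n^{\le d}(R)$. By (6), $R^m$ is then $(n,d)$-coherent, so (5) holds and hence (1) holds.

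Neither direction presents a real obstacle, since everything reduces to the preceding proposition. The only point needing care is remembering that free modules of finite rank lie in $\mathsf{FP}_n^{\le d}(R)$. For $d=\infty$ this uses the convention $\mathsf{P}^{\le\infty}(R)=\lMod R$, consistent with $\mathsf{FP}_n^{\le\infty}(R)=\mathsf{FP}_n(R)$.
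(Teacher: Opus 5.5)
Your proof is correct and follows the paper's intended route: the paper states the corollary without proof as an immediate consequence of the preceding proposition, with the inclusion $\mathsf{(n,d)}\text{-}\mathsf{Coh}(R)\cap\mathsf{P}^{\le d}(R)\subseteq\mathsf{FP}_n^{\le d}(R)$ automatic, (1)$\Rightarrow$(6) coming from (1)$\Rightarrow$(3), and (6)$\Rightarrow$(1) from applying (6) to $R^m$ and then using (5)$\Rightarrow$(1) via Proposition \ref{prop: car1}. (The step (1)$\Rightarrow$(3) you cite needs one short argument rather than being literally ``direct'': if $N\subseteq M$ with $N\in\mathsf{FP}_{n-1}^{\le d-1}(R)$, then $M/N\in\mathsf{FP}_n^{\le d}(R)$, so $M/N\in\mathsf{FP}_{n+1}(R)$ by (1), and therefore $N\in\mathsf{FP}_n(R)$.)
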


\begin{rmk}\label{rmk: 1}
Let $n \in \mathbb{N}^*$ and $d\in \mathbb{N}^*\cup \{\infty\}$.
\begin{enumerate}\renewcommand{\labelenumi}{(\alph{enumi})}
\item Let $R$  be a ring. If $R$ is left $(n,d)$-coherent, then it is left $(m,d)$-coherent for every $m>n$, and left $(n,s)$-coherent for every $s<d$. Moreover, if $R$ is left $n$-coherent, then it is left $(n,d)$-coherent for every
$d\in\mathbb{N}^*$. Conversely, if $R$ is left $(n,d)$-coherent for some $d\ge\gD(R)$, then $R$ is left $n$-coherent. 
Finally, if $\wD(R)=k<\infty$, then $R$ is left $(k+1,d)$-coherent for every $d\in \mathbb{N}^*$ by \cite[Cor. 5.8]{L25}.
\item Every left $(n,d)$-coherent ring is left weakly $(n,d)$-coherent. 
\item Rings for which every finitely $n$-presented module has projective dimension bounded by some $t\in\mathbb{N}$ are called left \newterm{$(n,t)$-rings}; see \cite{Costa}. Such rings are left $\max\{n,t\}$-coherent. Consequently, they are left $(n,d)$-coherent for all $d$ if $n\ge t$, and left $(t,d)$-coherent for every $d$ if $n<t$.
\item There exist rings that are not left $(n,d)$-coherent for any $d>n$. Let $n\ge 2$, and let $R$ be the ring constructed in \cite[Ex. 5.12]{L25}. Although modules are treated on the right in that work, the same construction applies to left modules after the necessary adjustments.  This ring satisfies $\mathsf{FP}_n(R) \subseteq \mathsf{P}^{\le n+1}(R)$, 
and is not left $n$-coherent. Thus, $R$ is not left $(n,d)$-coherent for any $d>n$. On the other hand, $R$ is a left $(n,n+1)$-ring
and hence is left $(n+1,d)$-coherent for every $d\in\mathbb{N}^*$.
\item Left $(n,d)$-coherence is not, in general, preserved under quotients. Indeed, let $R$ be a commutative ring with $\gD(R)=3$ that is not coherent. Then $R$ is not $(1,3)$-coherent. On the other hand, let
$X$ be a (possibly infinite) set of commuting indeterminates. Since
$\mathbb{Z}[X]$ is coherent, it is $(1,d)$-coherent for every
$d\in\mathbb{N}^*$. Moreover, every commutative ring is isomorphic to a
quotient of $\mathbb{Z}[X]$ for a suitable set $X$; see
\cite[Chap. 4, \S4.G, Ex. 4.61, p. 143]{L99}. 
\end{enumerate}
\end{rmk}

\begin{ex}\label{ex: 1} 
There exist non-$1$-coherent rings that are $(1,d)$-coherent for every $d\in \mathbb{N}^*$.
Let $\mathsf{k}$ be a field, and consider the local ring $R=\mathsf{k}[x_1, x_2, \ldots]/(x_1, x_2, \ldots)^2$
with maximal ideal $\mathfrak{m}=(x_1, x_2, \ldots)$. This ring is not coherent, since the kernel of the canonical epimorphism $R\twoheadrightarrow (x_1)$ is not finitely generated; see \cite[Ex. 1.3]{BP}. Moreover, every non-projective $R$-module has infinite projective dimension; see \cite[Ex. 2.2]{CET}. Hence,
$\mathsf{FP}_1^{\le d}(R)=\mathsf{proj}(R)$ for every $d\in\mathbb{N}^*$, and consequently $R$ is left $(1,d)$-coherent for all $d\in\mathbb{N}^*$.
\end{ex}

\begin{prop}\label{prop:equiv}
Let $R$ be a ring, $n\in\mathbb{N}^*$, and $d\in \mathbb{N}^*\cup \{\infty\}$. The following statements are equivalent:
\begin{enumerate}
\item[(1)] $R$ is left $(n,d)$-coherent.
\item[(7)] $\mathsf{FP}_n^{\le d}(R)=\mathsf{FP}_{n+j}^{\le d}(R)$  for every $j \in \mathbb{N}^*$.
\item[(8)] $\mathsf{FP}_n^{\le d}(R)=\mathsf{FP}_{\infty}^{\le d}(R)$.
\item[(9)] The class $\mathsf{FP}_n^{\le d}(R)$ is closed under kernels of epimorphisms between its objects.
\end{enumerate}
\end{prop}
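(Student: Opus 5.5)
I plan to prove the cycle $(1)\Rightarrow(7)\Rightarrow(8)\Rightarrow(1)$, and then attach (9) through $(8)\Rightarrow(9)\Rightarrow(1)$.

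For $(1)\Rightarrow(7)$, fix $M\in\mathsf{FP}_n^{\le d}(R)$ and argue by induction on $j$ that $M\in\mathsf{FP}_{n+j}(R)$. Left $(n,d)$-coherence gives $M\in\mathsf{FP}_{n+1}(R)$. For the inductive step, note that $M$ is then finitely $(n+j)$-presented with $\pd_R(M)\le d$, so it is in particular in $\mathsf{FP}_{n+j}^{\le d}(R)$. By Remark~\ref{rmk: 1}(a), $R$ is left $(n+j,d)$-coherent, hence $M\in\mathsf{FP}_{n+j+1}(R)$. The reverse inclusion $\mathsf{FP}_{n+j}^{\le d}\subseteq\mathsf{FP}_n^{\le d}$ is trivial. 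If one prefers to avoid citing the remark, the same induction can be done by hand through syzygies. Take $K\rightarrowtail F\twoheadrightarrow M$ with $F$ finitely generated free. Then $K$ is finitely $n$-presented with $\pd_R(K)\le d-1\le d$. Applying (1) to $K$ gives $K\in\mathsf{FP}_{n+1}(R)$, hence $M\in\mathsf{FP}_{n+2}(R)$, and iterating on successive syzygies finishes the argument. Next, $(7)\Rightarrow(8)$ is immediate by intersecting over $j$, and $(8)\Rightarrow(1)$ is immediate since $\mathsf{FP}_\infty\subseteq\mathsf{FP}_{n+1}$.

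For $(8)\Rightarrow(9)$, let $A\rightarrowtail B\twoheadrightarrow C$ be exact with $B,C\in\mathsf{FP}_n^{\le d}(R)=\mathsf{FP}_\infty^{\le d}(R)$. By the table, $\mathsf{FP}_\infty^{\le d}(R)$ is closed under kernels of epimorphisms, since $\mathsf{FP}_\infty$ is thick and $\mathsf{P}^{\le d}$ is resolving. So $A\in\mathsf{FP}_\infty^{\le d}(R)\subseteq\mathsf{FP}_n^{\le d}(R)$.

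For $(9)\Rightarrow(1)$, take $M\in\mathsf{FP}_n^{\le d}(R)$ and a sequence $K\rightarrowtail F\twoheadrightarrow M$ with $F$ finitely generated free. Then $F$ and $M$ both lie in $\mathsf{FP}_n^{\le d}(R)$, so (9) gives $K\in\mathsf{FP}_n(R)$, and therefore $M\in\mathsf{FP}_{n+1}(R)$.

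The only step needing care is $(1)\Rightarrow(7)$. There one must make sure the projective dimension bound survives passage to syzygies, which it does since $\pd_R(K)\le\max(\pd_R(M)-1,0)\le d$. One must also make sure that the finiteness level increases at each step, which is exactly what (1) applied to the syzygy provides.
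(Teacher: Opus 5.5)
Your proposal is correct and follows essentially the same route as the paper: $(1)\Rightarrow(7)$ via Remark~\ref{rmk: 1}(a), $(7)\Rightarrow(8)$ by intersecting over $j$, and $(8)\Rightarrow(9)\Rightarrow(1)$. For the steps the paper calls immediate, you supply the details: thickness of $\mathsf{FP}_\infty$ together with $\mathsf{P}^{\le d}$ being resolving, and the free presentation $K\rightarrowtail F\twoheadrightarrow M$. Your extra implication $(8)\Rightarrow(1)$ and the hand-made syzygy induction are harmless redundancies.
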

\begin{proof} By part (a) of Remark \ref{rmk: 1}, we have
$(1)\Rightarrow(7)$. Since
$\mathsf{FP}_{\infty}^{\le d}(R)
=\bigcap_{i\ge 1}\mathsf{FP}_i^{\le d}(R),$
we have $(7)\Rightarrow(8)$. The implications $(8)\Rightarrow(9)$ and $(9)\Rightarrow(1)$ are immediate.
\end{proof}

When $d\ge \gD(R)$, we recover \cite[Thm. 2.4 and Cor. 2.6]{BP}. The following characterization is an
immediate consequence of \cite[Cor. 2]{St76} and Proposition \ref{prop:equiv}.

\begin{cor}
Let $R$ be a ring, and let $n,d\in\mathbb{N}^*$. The following statements are equivalent:
\begin{enumerate}
\item[(1)] $R$ is left $(n,d)$-coherent.
\item[(10)] For every $M\in\mathsf{FP}_n^{\leq d}(R)$ and every index set $I$, the canonical map $\bigoplus_{i\in I}\Ext_R^k(M,R)\to\Ext_R^k\left(M,\bigoplus_{i\in I}R\right)$ is surjective for every $0\leq k\leq d$.
\end{enumerate}
\end{cor}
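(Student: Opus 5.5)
We combine Proposition \ref{prop:equiv} with Strebel's criterion \cite[Cor. 2]{St76}. As we use it, Strebel's result says: if $M\in\mathsf{FP}_n(R)$ and $\pd_R(M)\le d$, then $M\in\mathsf{FP}_\infty(R)$ if and only if the canonical maps
$\bigoplus_{I}\Ext_R^k(M,R)\to\Ext_R^k(M,\bigoplus_I R)$ are surjective (equivalently bijective) for all index sets $I$ and all $0\le k\le d$. Here finite projective dimension is essential, since it makes $\Ext^k_R(M,-)$ vanish for $k>d$. Once this is granted, the corollary follows by applying the criterion to each $M\in\mathsf{FP}_n^{\le d}(R)$. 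Condition (8) of Proposition \ref{prop:equiv} is then equivalent to (10).

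For (1)$\Rightarrow$(10), take $M\in\mathsf{FP}_n^{\le d}(R)$. By Proposition \ref{prop:equiv}, $M\in\mathsf{FP}_\infty^{\le d}(R)$. Choose a projective resolution $P_\bullet\to M$ by finitely generated free modules. Since $M$ has projective dimension at most $d$, we may truncate at degree $d$: the $d$-th syzygy is projective and a direct summand of a finitely generated module, hence finitely generated projective. So $M$ has a finite resolution by finitely generated projectives $P_d\to\cdots\to P_0$.

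For finitely generated projective $P$, $\Hom_R(P,-)$ commutes with arbitrary direct sums. Therefore $\Hom_R(P_\bullet,\bigoplus_I R)\cong\bigoplus_I\Hom_R(P_\bullet,R)$ as complexes. Since direct sums are exact, taking cohomology shows the canonical map in (10) is an isomorphism for every $k$, in particular surjective for $0\le k\le d$.

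For (10)$\Rightarrow$(1), we verify (8) and invoke Proposition \ref{prop:equiv}. Let $M\in\mathsf{FP}_n^{\le d}(R)$; we show $M\in\mathsf{FP}_\infty(R)$ by induction, proving $M\in\mathsf{FP}_m(R)$ for all $m\ge n$. The key step is the standard dimension-shifting argument behind Strebel's theorem. Write $0\to K\to F\to M\to 0$ with $F$ finitely generated free, so that $K\in\mathsf{FP}_{n-1}$ and $\pd_R K\le d-1$. The surjectivity hypotheses for $M$ then pass to conditions on $K$ via the long exact $\Ext$-sequences and the five lemma, using that $F$ is finitely generated projective.

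Iterating, one reaches the $d$-th syzygy $\Omega^d M$, which is projective. We then use the classical fact that a projective module $P$ is finitely generated if and only if $\Hom_R(P,\bigoplus_I R)\cong\bigoplus_I\Hom_R(P,R)$ surjectively. This holds because a projective module is a summand of a free module, and surjectivity for the inclusion into the free module forces the image to lie in finitely many summands. Hence $\Omega^d M$ is finitely generated. Together with the finiteness of $P_0,\dots,P_{n-1}$, this gives a resolution of $M$ by finitely generated projectives, filling in the intermediate terms by the usual syzygy argument.

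The main obstacle is this bookkeeping in the intermediate degrees: one must check that surjectivity, rather than bijectivity, of the maps in degrees $k\le d$ suffices to propagate finiteness from $\Omega^{n}M$ up to $\Omega^{d}M$. This is exactly the content of \cite[Cor. 2]{St76}, so in the write-up we cite it directly rather than redo the induction.
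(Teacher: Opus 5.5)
Your proposal is correct and follows the paper's route: the paper also derives the corollary by applying \cite[Cor. 2]{St76} to each $M\in\mathsf{FP}_n^{\le d}(R)$ and combining it with the equivalence (1)$\Leftrightarrow$(8) of Proposition~\ref{prop:equiv}, and your direct proof of (1)$\Rightarrow$(10) via a finite resolution by finitely generated projectives is a valid unpacking of the easy half. Your induction sketch for (10)$\Rightarrow$(1) would not work on its own, since past degree $n$ the syzygies need not be finitely generated, so no finitely generated free cover is available for the four-lemma transfer, and finite generation of $\Omega^d M$ says nothing about the intermediate terms; relying on Strebel's result there, as you ultimately do, is therefore essential.
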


The following characterization of left $(n,d)$-coherent rings follows from \cite[Thm. 2.3]{B10} by taking $\mathcal{X}=\mathsf{FP}_n^{\le d}(R)$.

\begin{prop}
Let $R$ be a ring, $n\in\mathbb{N}^*$, and $d\in\mathbb{N}^*\cup\{\infty\}$.
The following statements are equivalent:
\begin{enumerate}
\item[(1)] $R$ is left $(n,d)$-coherent.
\item[(10)] For every set $J$ and every $M\in\mathsf{FP}_n^{\le d}(R)$, the canonical morphism
$R^{J}\otimes_R M\to M^{J}$
is bijective and $\Tor^R_i(R^{J},M)=0$ for all $1\le i\le n$.
\item[(11)] For every family $\{P_j\}_{j\in J}$ of $R^{\op}$-modules and every
$M\in\mathsf{FP}_n^{\le d}(R)$, the canonical morphism
$\prod_{j\in J}\Tor^R_i(P_j,M)\to
\Tor^R_i\!\left(\prod_{j\in J}P_j,M\right)$
is bijective for all $ i\le n$.
\item[(12)] For every $M\in\mathsf{FP}_n^{\le d}(R)$ and every directed system
$\{N_j\}_{j\in J}$ of $R$-modules over a directed index set $J$, the canonical morphism
$\varinjlim \Ext^i_R(M,N_j)\to
\Ext^i_R\!\left(M,\varinjlim N_j\right)$
is bijective for all $ i\le n$.
\end{enumerate}
\end{prop}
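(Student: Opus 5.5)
The plan is to invoke \cite[Thm. 2.3]{B10} with $\mathcal{X}=\mathsf{FP}_n^{\le d}(R)$, as the text preceding the statement indicates. Once its hypotheses are checked, that theorem says the following three conditions are all equivalent to $\mathcal{X}\subseteq\mathsf{FP}_{n+1}(R)$: the bijectivity of $R^J\otimes_R M\to M^J$ together with the vanishing of $\Tor^R_i(R^J,M)$ for $1\le i\le n$; the commutation of $\Tor^R_i(-,M)$ with products for $i\le n$; and the commutation of $\Ext^i_R(M,-)$ with direct limits for $i\le n$. By definition, left $(n,d)$-coherence is exactly the inclusion $\mathsf{FP}_n^{\le d}(R)\subseteq\mathsf{FP}_{n+1}(R)$. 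So (1) is the common condition, and (10), (11), (12) are the three reformulations.

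The first step is to check that $\mathcal{X}$ meets Bieri's setting: a class of finitely $n$-presented modules, closed under whatever operation the cited theorem requires. Certainly $\mathsf{FP}_n^{\le d}(R)\subseteq\mathsf{FP}_n(R)$, so for each $M\in\mathcal{X}$ the maps in (10)--(12) are already bijective in degrees $i<n$; this is the classical fact that finitely $n$-presented modules satisfy these conditions one degree below. If the cited theorem needs closure under finite direct sums or direct summands, the table in \S\ref{s:Finiteness conditions in modules of bounded projective dimension} supplies it.

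The second step handles the top degree $i=n$, where the real content lies. Take $M\in\mathcal{X}$ and a finite presentation $0\to K\to F_{n-1}\to\cdots\to F_0\to M\to 0$ with $K$ finitely generated. Dimension shifting gives $\Tor_n^R(P,M)\cong\Ker(P\otimes K\to P\otimes F_{n-1})$ and $\Ext^n_R(M,N)\cong\Coker(\Hom(F_{n-1},N)\to\Hom(K,N))$. Bijectivity in degree $n$ for products, respectively direct limits, then amounts to $P\otimes K$ commuting with products, respectively $\Hom(K,-)$ commuting with direct limits. By the classical Lenzing--Bieri criteria, each of these is equivalent to $K$ being finitely presented, that is, to $M\in\mathsf{FP}_{n+1}(R)$. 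Condition (10) is the special case $P_j=R$ of (11), together with the Tor-vanishing that follows from it.

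The main obstacle is the direction (12)$\Rightarrow$(1), and the analogous converses. There one must pass from a commutation property of $M$ to finite presentability of the syzygy $K$, which is not itself assumed to lie in $\mathcal{X}$. Because the criterion is applied to the single module $M$ through its fixed presentation, no closure of $\mathcal{X}$ under syzygies is needed. I would verify this carefully against the precise hypotheses of \cite[Thm. 2.3]{B10}, since the class $\mathsf{FP}_n^{\le d}(R)$ is not closed under kernels of epimorphisms.
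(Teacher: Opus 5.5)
Your proposal is correct and follows the paper's route: the paper proves this proposition simply by applying \cite[Thm. 2.3]{B10} with $\mathcal{X}=\mathsf{FP}_n^{\le d}(R)$, using that left $(n,d)$-coherence means exactly $\mathsf{FP}_n^{\le d}(R)\subseteq\mathsf{FP}_{n+1}(R)$. Your dimension-shifting sketch reduces degree $n$ to the finitely generated $n$-th syzygy $K$ and then applies the Lenzing/Bieri criteria to $K$; this is a sound module-by-module check, and, as you observe, it shows that the argument needs no closure of $\mathcal{X}$ under kernels of epimorphisms.
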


The following corollary is an immediate consequence of the preceding proposition and Remark \ref{rmk: 1}. When $d\ge \gD(R)$, it recovers the equivalence between conditions (1) and (7) in \cite[Thm. 4.1]{MD06}.

\begin{cor} Let $R$ be a ring, $n\in\mathbb{N}^*$ and $d\in\mathbb{N}^*\cup\{\infty\}$.  
The following statements are equivalent:
\begin{enumerate}
\item[(1)] $R$ is left $(n,d)$-coherent.
\item[(13)] For every directed system $\{N_j\}_{j \in J}$ of $R$-modules over a directed index set $J$ and every $M \in \mathsf{FP}_n^{\le d}(R)$, the canonical homomorphism $\varinjlim \Ext^i_R(M, N_j)\to\Ext^i_R\!\left(M,\varinjlim N_j\right)$ is bijective for all $i\geq 0$.
\end{enumerate}    
\end{cor}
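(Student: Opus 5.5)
The implication (13) $\Rightarrow$ (1) is immediate. Condition (13) asks for bijectivity for all $i\ge 0$, which contains the bijectivity for $0\le i\le n$ required in condition (12) of the preceding proposition. That proposition then gives that $R$ is left $(n,d)$-coherent.

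For (1) $\Rightarrow$ (13), the plan is to reduce to the preceding proposition using the stability in $n$ recorded in Remark \ref{rmk: 1}(a). Assume $R$ is left $(n,d)$-coherent and fix $M\in\mathsf{FP}_n^{\le d}(R)$ and $i\ge 0$. Set $m:=\max\{n,i\}$. By Remark \ref{rmk: 1}(a), $R$ is left $(m,d)$-coherent. By Proposition \ref{prop:equiv} (condition (7) or (8)), $\mathsf{FP}_n^{\le d}(R)=\mathsf{FP}_m^{\le d}(R)$, so $M\in\mathsf{FP}_m^{\le d}(R)$. We now apply the preceding proposition with $n$ replaced by $m$, namely the implication (1) $\Rightarrow$ (12) for the pair $(m,d)$. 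This shows that $\varinjlim\Ext^j_R(M,N_k)\to\Ext^j_R(M,\varinjlim N_k)$ is bijective for all $j\le m$, and in particular for $j=i$. Since $i$ was arbitrary, (13) follows.

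The only point needing care is this re-indexing in $n$. The preceding proposition is stated uniformly for every $n\in\mathbb{N}^*$, so invoking it at level $m$ is legitimate, provided $(m,d)$-coherence and membership of $M$ in $\mathsf{FP}_m^{\le d}(R)$ are checked first, as above.

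As an alternative, one can argue directly. By Proposition \ref{prop:equiv}(8), $M\in\mathsf{FP}_\infty(R)$, so $M$ has a projective resolution $P_\bullet$ by finitely generated projectives. Then $\Hom_R(P_k,-)$ commutes with direct limits, and since direct limits are exact they commute with cohomology, giving the isomorphism in every degree. One may also note that for $i>d$ both sides vanish because $\pd_R(M)\le d$, although this observation is not needed.
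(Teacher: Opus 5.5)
Your proof is correct and follows the paper's route: the paper obtains the corollary as an immediate consequence of the preceding proposition (the equivalence (1)$\Leftrightarrow$(12)) together with Remark \ref{rmk: 1}(a), and your re-indexing at $m=\max\{n,i\}$, using Proposition \ref{prop:equiv} to get $M\in\mathsf{FP}_m^{\le d}(R)$, spells out the step the paper leaves implicit. Your alternative argument is also valid: it resolves $M\in\mathsf{FP}_\infty(R)$ by finitely generated projectives and uses that direct limits are exact.
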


Let $R_1$ and $R_2$ be rings, and set $R = R_1 \oplus R_2$. Let $M = M_1 \oplus M_2$ be an $R$-module, where each $M_i$ is an $R_i$-module for $i=1,2$. By parts (1) and (2) of \cite[Lem. 6.4]{L25}, we have
$\pd_R(M)=\sup\{\pd_{R_1}(M_1),\,\pd_{R_2}(M_2)\},$ and $M \in \mathsf{FP}_n(R)$ if and only if $M_i \in \mathsf{FP}_n(R_i)$ for $i=1,2$. It follows that the class of left $(n,d)$-coherent rings is closed under finite direct products.

\begin{theorem}\label{theo: products}
Let $n\in\mathbb{N}^*$ and $d\in\mathbb{N}^*\cup \{\infty\}$, and let $R_1,\ldots,R_k$ be rings. Set $R=\prod_{i=1}^k R_i$. Then $R$ is left $(n,d)$-coherent if and only if each $R_i$ is left
$(n,d)$-coherent for $i=1,\ldots,k$.
\end{theorem}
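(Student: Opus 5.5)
By induction on $k$ it suffices to treat $k=2$, writing $R=R_1\times R_2$ with central idempotents $e_1=(1,0)$ and $e_2=(0,1)$. Every left $R$-module $M$ splits canonically as $M=e_1M\oplus e_2M$, where $M_i:=e_iM$ is an $R_i$-module and $R_{3-i}$ acts on it by zero. Conversely, every pair $(M_1,M_2)$ with $M_i\in\lMod{R_i}$ gives the $R$-module $M_1\oplus M_2$. This yields an equivalence $\lMod R\simeq \lMod{R_1}\times\lMod{R_2}$, which is exact and preserves projectives and finitely generated modules. The two facts quoted just before the statement from \cite[Lem. 6.4]{L25} make this precise for our invariants:
\[
\pd_R(M)=\sup\{\pd_{R_1}(M_1),\pd_{R_2}(M_2)\},\qquad M\in\mathsf{FP}_n(R)\iff M_i\in\mathsf{FP}_n(R_i)\ (i=1,2),
\]
and the latter holds for every $n$, in particular for $n+1$.

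($\Leftarrow$) Assume $R_1$ and $R_2$ are left $(n,d)$-coherent, and let $M\in\mathsf{FP}_n^{\le d}(R)$. By the displayed facts, $M_i\in\mathsf{FP}_n(R_i)$ and $\pd_{R_i}(M_i)\le \pd_R(M)\le d$, so $M_i\in\mathsf{FP}_n^{\le d}(R_i)$. Coherence of $R_i$ gives $M_i\in\mathsf{FP}_{n+1}(R_i)$ for $i=1,2$, and the second fact at level $n+1$ gives $M\in\mathsf{FP}_{n+1}(R)$.

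($\Rightarrow$) Assume $R$ is left $(n,d)$-coherent, and let $M_1\in\mathsf{FP}_n^{\le d}(R_1)$. Regard $M_1$ as the $R$-module $M=M_1\oplus 0$. Since the zero $R_2$-module lies in $\mathsf{FP}_n(R_2)$ and has projective dimension $0$ (or $-\infty$; either way the supremum is unaffected), we get $M\in\mathsf{FP}_n(R)$ and $\pd_R(M)=\pd_{R_1}(M_1)\le d$. Hence $M\in\mathsf{FP}_{n+1}(R)$, and projecting back gives $M_1\in\mathsf{FP}_{n+1}(R_1)$. The argument for $R_2$ is symmetric. For the case $d=\infty$ the projective-dimension condition is vacuous, and the same argument applies verbatim.

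The only step requiring care is the dictionary supplied by \cite[Lem. 6.4]{L25}, and in particular its behaviour on modules with a zero component. I would verify this directly. A finite $n$-presentation $F_\bullet\to M$ over $R$ yields the finite $n$-presentation $e_iF_\bullet\to e_iM$ over $R_i$, since $e_iR^m=R_i^m$. Conversely, given finite $n$-presentations of $M_1$ and $M_2$, their direct sum is a finite $n$-presentation of $M$ over $R$, because $R_1^{a}\oplus R_2^{b}$ is a direct summand of $R^{\max(a,b)}$ and finitely generated projectives suffice in the definition. For projective dimension, the same componentwise splitting of projective resolutions gives the supremum formula. The induction from $k=2$ to arbitrary $k$ then only uses $\prod_{i=1}^k R_i\cong\bigl(\prod_{i=1}^{k-1}R_i\bigr)\times R_k$.
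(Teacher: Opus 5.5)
Your proof is correct and takes the same approach as the paper, which likewise deduces the result directly from the componentwise formulas $\pd_R(M)=\sup\{\pd_{R_1}(M_1),\pd_{R_2}(M_2)\}$ and $M\in\mathsf{FP}_n(R)\iff M_i\in\mathsf{FP}_n(R_i)$ quoted from \cite[Lem. 6.4]{L25}. You spell out both directions and verify that dictionary directly, including the zero-component case, whereas the paper leaves these details implicit.
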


Theorem \ref{theo: products} gives more examples of $(n,d)$-coherent rings. Let $\mathsf{k}$ be a field, and let
$R_1=\mathsf{k}[x_1,x_2,\ldots]/(x_1,x_2,\ldots)^2$ be the ring introduced in Example \ref{ex: 1}. Let $R_2$ be any coherent ring. By Theorem \ref{theo: products}, the product ring $R=R_1\times R_2$ is $(1,d)$-coherent for every $d\in\mathbb{N}^*$. On the other hand, $R$ is not coherent by \cite[Thm. 2.13]{DKM97}.

\begin{rmk}\label{rmk:surjections-skew}
Let $n\in\mathbb{N}^*$ and $d\in\mathbb{N}^*\cup\{\infty\}$.
\begin{enumerate}\renewcommand{\labelenumi}{(\alph{enumi})}
\item By \cite[Lem. 3.2]{ODL14}, for any surjective homomorphism of rings $R\twoheadrightarrow S$ such
that $S$ is projective as both an $R$-module and an $R^{\op}$-module, if
$R$ is left $(n,d)$-coherent, then so is $S$. 
\item Let $G$ be a finite group acting on a ring $R$ via $\sigma \colon G \to \mathrm{Aut}(R)$, let $H \le G$, and let $R_\sigma[G]$ and $R_\sigma[H]$ be the associated skew group rings; see \cite{GP25}. If $R_\sigma [G]$ is left $(n,d)$-coherent, then so is $R_\sigma [H]$. Moreover, if $R_\sigma [G]$ is a separable extension of $R_\sigma [H]$, then $R_\sigma [G]$ is left $(n,d)$-coherent if and only if $R_\sigma [H]$ is left $(n,d)$-coherent. These assertions follow by the same arguments as in \cite[Prop. 3.4]{GP25}, together with \cite[Lems. 2.2 and 3.1]{GP25}. For $d\ge \gD(R)$, this recovers \cite[Prop. 3.4]{GP25}.
\item Since $\mathsf{M}_{k\times k}(R)$ is a separable extension of
$R$ for every $k\in\mathbb{N}^*$, part (b) implies that $R$ is left
$(n,d)$-coherent if and only if $\mathsf{M}_{k\times k}(R)$ is left
$(n,d)$-coherent.
\end{enumerate}
\end{rmk}

We conclude this section with the following observation. Set $\mathcal{C} = \mathsf{FP}_1^{<\infty}(R)$ and consider the cotorsion pair $(\mathcal{A}, \mathcal{B})$ cogenerated by $\mathcal{C}$, where $\mathcal{B} = \mathcal{C}^{\perp_1}$ and $\mathcal{A} = {}^{\perp_1}\mathcal{B}$. By \cite[Lem. 2.1]{AT02}, if $R$ is left coherent, every module in $\mathcal{A}$ is a direct limit of modules in $\mathcal{C}$. We do not know whether this assertion remains valid under the weaker hypothesis that $R$ is left $(1,d)$-coherent for every $d\in\mathbb{N}^*$.

\begin{question}
Let $R$ be a ring that is left $(1,d)$-coherent for every $d\in\mathbb{N}^*$, and set $\mathcal{C}=\mathsf{FP}_1^{<\infty}(R)$. Is every module in $\mathcal{A}={}^{\perp_1}(\mathcal{C}^{\perp_1})$ a direct limit of modules in $\mathcal{C}$?
\end{question}

Note that if $R$ is left \newterm{1-regular} (i.e., every finitely
$1$-presented $R$-module has finite projective dimension), then the answer
to the Question is affirmative, as it reduces to \cite[Lem. 2.1]{AT02}.

\section{$(n,d)$-coherence, regularity, and $\mathsf{K}$-theory}\label{s: k}

We refer to \cite[Ch. 3]{Rosenberg} for the background on algebraic $\mathsf{K}$-theory. It is
well known that the Grothendieck group of $\mathsf{proj}(R)$ is denoted simply by $\mathsf{K}_0(R)$,
which is called the Grothendieck group of $R$. Since $\mathsf{K}_0(R)$ is
defined in terms of finitely generated projective $R$-modules, it is
natural to ask when $\mathsf{FP}_n^{\le d}(R)$ coincides with $\mathsf{proj}(R)$. Let $n\in \mathbb{N}^*$ and $d\in\mathbb{N}^*\cup\{\infty\}$, if $M\in\mathsf{FP}_n^{\leq d}(R)$, then $M$ is projective if and only if $\Ext_R^1(M,N)=0$ for every $N\in\mathsf{FP}_{n-1}^{d-1}(R)$. When $d=\infty$, this recovers \cite[Lem. 7.11(2)]{PA25}. Motivated by the proof of \cite[Thm. 2.1]{M01}, we now characterize when $\mathsf{FP}_n^{\leq d}(R)=\mathsf{proj}(R)$.

\begin{theorem}\label{theo: mah}
Let $R$ be a ring, let $n\in \mathbb{N}^*$ and $d\in \mathbb{N}^*$. Then
$\mathsf{FP}_n^{\le d}(R)=\mathsf{proj}(R)$ if and only if $R$ is left $(n,d)$-coherent and the left annihilator of every finitely generated proper ideal of $R^{\op}$ is nonzero.
\end{theorem}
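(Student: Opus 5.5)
The plan is to treat the two implications separately. The forward implication is elementary once a suitable cyclic-style presentation is written down. The converse reduces, through $(n,d)$-coherence, to showing that monomorphisms between finitely generated free modules split; that is where the annihilator condition is used.

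\emph{($\Rightarrow$)} Assume $\mathsf{FP}_n^{\le d}(R)=\mathsf{proj}(R)$.
\begin{enumerate}
\item Since $\mathsf{proj}(R)\subseteq\mathsf{FP}_\infty(R)$, every module in $\mathsf{FP}_n^{\le d}(R)$ is finitely $(n+1)$-presented, so $R$ is left $(n,d)$-coherent.
\item For the annihilator condition, argue by contradiction. Let $I=a_1R+\cdots+a_kR$ be a finitely generated proper ideal of $R^{\op}$, and suppose its left annihilator is $0$.
\item The left module map $\varphi\colon R\to R^k$, $r\mapsto(ra_1,\dots,ra_k)$, is then injective. Its cokernel $M$ has the finite free resolution $0\to R\to R^k\to M\to 0$.
\item Hence $M\in\mathsf{FP}_\infty(R)$ and $\pd_R(M)\le 1\le d$, so $M\in\mathsf{FP}_n^{\le d}(R)=\mathsf{proj}(R)$.
\item Therefore the sequence splits: there is $\psi\colon R^k\to R$ with $\psi\varphi=\mathrm{id}$. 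Writing $b_i=\psi(e_i)$ gives $1=\sum_i a_ib_i\in I$, contradicting properness.
\end{enumerate}

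\emph{($\Leftarrow$)} Let $M\in\mathsf{FP}_n^{\le d}(R)$.
\begin{enumerate}
\item By Proposition \ref{prop:equiv}, $M\in\mathsf{FP}_\infty^{\le d}(R)$. Take a resolution of $M$ by finitely generated free modules and truncate at stage $m=\pd_R(M)\le d$.
\item The $m$-th syzygy is finitely generated and projective. This gives a finite resolution $0\to P_m\to F_{m-1}\to\cdots\to F_0\to M\to 0$ with $P_m\in\mathsf{proj}(R)$ and each $F_i$ finitely generated free.
\item Adding a complement $Q$ with $P_m\oplus Q$ free to both of the last two terms, we may assume all terms are finitely generated free.
\item We argue by induction on $m$. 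It suffices to show that the last map $R^a\rightarrowtail R^b$ is split injective. Then its cokernel is finitely generated projective, which shortens the resolution.
\end{enumerate}

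\emph{Key lemma.} Under the annihilator hypothesis, every monomorphism $R^a\rightarrowtail R^b$ of left modules splits. The proof is by induction on $a$.
\begin{enumerate}
\item For $a=1$, the map is $r\mapsto(ra_1,\dots,ra_b)$. Injectivity means the left annihilator of $I=\sum a_iR$ is zero, so $I=R$ by hypothesis. Writing $1=\sum a_ib_i$, the map $e_i\mapsto b_i$ is a retraction. Hence $R^b=Rv\oplus C$, where $v$ is the image of $1$ and $C$ is finitely generated projective.
\item For $a>1$, restricting to the first coordinate gives the splitting $R^b=Rv\oplus C$ as above. The induced map $R^{a-1}\to R^b/Rv\cong C$ is still injective, because the original map is injective.
\item Choose $C'$ with $C\oplus C'\cong R^c$. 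The composite $R^{a-1}\rightarrowtail R^c$ splits by induction, so its image is a direct summand of $C\oplus C'$, hence of $C$.
\item Reassembling, the image of $R^a$ is a direct summand of $R^b$.
\item Applying the lemma along the resolution shows $M$ is projective. Being finitely generated, $M\in\mathsf{proj}(R)$.
\end{enumerate}

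I expect the main obstacle to be the bookkeeping needed to keep everything free. This covers both the passage from $P_m$ to a free module and the replacement of $C$ by $C\oplus C'$ inside the induction. At each stage one must check that injectivity is preserved and that "direct summand of $C\oplus C'$" descends to "direct summand of $C$". The latter holds because the image lies in $C$ and a projection onto it restricts to $C$.
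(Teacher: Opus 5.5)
Your proof is correct, but it takes a more self-contained route than the paper.

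\textbf{How the paper argues.} It never works with matrices. In both directions it passes through the intermediate property ``every finitely generated projective submodule of a projective module is a direct summand''. It then invokes Bass's theorem \cite[Thm.~5.4]{B60} for the equivalence of that property with the annihilator condition.
\begin{itemize}
\item For ($\Leftarrow$), it applies this to the last map $P_d\rightarrowtail P_{d-1}$ of a resolution by finitely generated projectives and peels off summands.
\item For ($\Rightarrow$), it shows that $Q/P$ is projective whenever $P\subseteq Q$ are projective with $P$ finitely generated. When $Q$ is not finitely generated, this requires splitting $Q=Q_1\oplus Q_2$ with $Q_1$ finitely generated free and containing $P$. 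It then reads off the annihilator condition from Bass.
\end{itemize}

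\textbf{How you argue.} You reprove exactly the pieces of Bass's theorem that are needed.
\begin{itemize}
\item In ($\Rightarrow$), the single test module $\Coker(R\to R^k)$, with $r\mapsto(ra_1,\dots,ra_k)$, has projective dimension at most $1$. It witnesses failure of the annihilator condition directly, so infinitely generated $Q$ never appear.
\item In ($\Leftarrow$), your rank induction shows that the annihilator hypothesis forces every monomorphism $R^a\rightarrowtail R^b$ to split. The hypothesis is used only in the rank-one case, via $1=\sum a_ib_i$. Your stabilization $P_m\oplus Q$ reduces the resolution to free modules.
\end{itemize}
I checked the delicate points and they hold: the induced map $R^{a-1}\to R^b/Rv\cong C$ is injective, a summand of $C\oplus C'$ lying in $C$ is a summand of $C$, and $\varphi(R^a)=Rv\oplus\bar\varphi(R^{a-1})$.

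\textbf{Trade-off.} Your argument is longer but elementary, and it makes transparent where the annihilator condition enters. The paper's argument is shorter, but it rests entirely on the external theorem.
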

\begin{proof}
Assume first that $R$ is left $(n,d)$-coherent and satisfies the annihilator condition.
Let $M \in \mathsf{FP}_n^{\le d}(R)$. Then $M$ admits a finite projective resolution
$ P_d \rightarrowtail P_{d-1} \xrightarrow{f_{d-1}}
\cdots \xrightarrow{f_1} P_0 \overset{f_0}\twoheadrightarrow M,$
where each $P_i$ is a finitely generated projective $R$-module. Since $P_d = \ker(f_{d-1})$ is a projective submodule of the projective module $P_{d-1}$, it is a direct summand of $P_{d-1}$ by \cite[Thm. 5.4]{B60}.
Consider the short exact sequence $P_d \rightarrowtail P_{d-1} \twoheadrightarrow \Img(f_{d-1})$. It follows that $\Img(f_{d-1})$ is a finitely generated projective $R$-module. Iterating this argument, we conclude that $M = \Img(f_0)$ is a finitely generated projective $R$-module.
Conversely, assume that $\mathsf{FP}_n^{\le d}(R)=\mathsf{proj}(R)$. Then $R$ is trivially
left $(n,d)$-coherent. Let $P \subseteq Q$ be projective $R$-modules with $P$ finitely generated. We show that
$Q/P$ is projective. We may assume that $Q$ is free.  If $Q$ is finitely generated, then
$Q/P\in\mathsf{FP}_n^{\leq d}(R)$, and hence $Q/P$ is projective. If $Q$ is not finitely generated, then $Q=Q_1\oplus Q_2$ for free $R$-modules $Q_1$ and $Q_2$, where $Q_1$ is finitely generated and contains $P$. As above, $Q_1=P\oplus P'$ for some
$R$-module $P'$, and thus $Q=P\oplus P'\oplus Q_2.$ Therefore $Q/P$ is projective. By \cite[Thm. 5.4]{B60}, the annihilator condition follows.
\end{proof}

Further characterizations of this equality are given in Proposition \ref{prop: von neumman}. For the ring in
Example \ref{ex: 1}, we have $\mathsf{FP}_1^{\le d}(R)=\mathsf{proj}(R)$ for every $d\in\mathbb{N}^*$. Hence, Theorem \ref{theo: mah} shows that the annihilator of every finitely generated proper ideal of $R$ is nonzero.\\

We now turn to the consequences of $(n,d)$-coherence in $\mathsf{K}$-theory. We shall use the following version of Quillen's Resolution Theorem.

\begin{theorem}[\textbf{Resolution Theorem}]\label{thm: res}
Let $\mathcal{M}$ be an exact category and let $\mathcal{P}$ be a full
subcategory of $\mathcal{M}$ closed under extensions. Assume that:
\begin{itemize}
\item If $M\rightarrowtail P\twoheadrightarrow P'$ is exact in
$\mathcal{M}$ with $P,P'\in\mathcal{P}$, then $M\in\mathcal{P}$.
\item Every object of $\mathcal{M}$ admits a finite resolution by
objects of $\mathcal{P}$.
\end{itemize}
Then the inclusion $\mathcal{P}\hookrightarrow\mathcal{M}$ induces
isomorphisms $\mathsf{K}_i(\mathcal{P})\xrightarrow{\ \cong\ }\mathsf{K}_i(\mathcal{M})$ for all $ i\ge 0.$
\end{theorem}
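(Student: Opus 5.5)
This is Quillen's Resolution Theorem, and I would not reprove it from scratch. I would cite Quillen's original argument (\emph{Higher algebraic K-theory I}, \S4, Thm.~3 and its corollary), as presented in \cite[Ch.~3]{Rosenberg}. In outline the proof goes as follows.

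\textbf{Step 1: Filtration by resolution length.} For $k\ge 0$, let $\mathcal{M}_k\subseteq\mathcal{M}$ be the full subcategory of objects admitting a $\mathcal{P}$-resolution of length at most $k$, so $\mathcal{M}_0=\mathcal{P}$. The second hypothesis gives $\mathcal{M}=\bigcup_k\mathcal{M}_k$.

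\textbf{Step 2: Homological bookkeeping.} The first hypothesis (closure of $\mathcal{P}$ under kernels of admissible epimorphisms between its objects) gives two things:
\begin{itemize}
\item a Schanuel-type lemma: if $M\in\mathcal{M}_k$, then for \emph{any} admissible epimorphism $P\twoheadrightarrow M$ with $P\in\mathcal{P}$, the kernel lies in $\mathcal{M}_{k-1}$;
\item via a horseshoe argument, each $\mathcal{M}_k$ is closed under extensions in $\mathcal{M}$, hence is itself an exact category.
\end{itemize}
Consequently, for each $k\ge 1$ the pair $\mathcal{M}_{k-1}\subseteq\mathcal{M}_k$ again satisfies both hypotheses, now with resolutions of length at most one. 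Since $\mathsf{K}$-theory commutes with filtered unions of exact subcategories, $\mathsf{K}_i(\mathcal{M})=\varinjlim_k\mathsf{K}_i(\mathcal{M}_k)$. It therefore suffices to treat the case where every object of $\mathcal{M}$ has a $\mathcal{P}$-resolution of length at most $1$.

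\textbf{Step 3: The length-one case.} Here one shows that $Q\mathcal{P}\to Q\mathcal{M}$ is a homotopy equivalence. Following Quillen, introduce an auxiliary category of admissible monomorphisms $P_1\rightarrowtail P_0$ in $\mathcal{P}$ with cokernel in $\mathcal{M}$. Then:
\begin{itemize}
\item the projection to $Q\mathcal{P}$ is a homotopy equivalence, by an additivity/characteristic-filtration argument;
\item the cokernel functor to $Q\mathcal{M}$ is a homotopy equivalence, by Quillen's Theorem A.
\end{itemize}

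\textbf{Main obstacle.} The real difficulty is the Theorem~A step in Step~3: proving that the fibre over each $M$ is contractible. This amounts to showing that the category of length-one $\mathcal{P}$-resolutions of $M$ is (co)filtered. Given two resolutions $P_0\twoheadrightarrow M$ and $P_0'\twoheadrightarrow M$, the pullback $P_0\times_M P_0'$ lies in $\mathcal{P}$, because it is an extension of $P_0'$ by $\ker(P_0\twoheadrightarrow M)\in\mathcal{P}$ and $\mathcal{P}$ is closed under extensions. This pullback dominates both resolutions. The delicate part is handling the $Q$-construction morphisms, which are spans of admissible monos and epis, rather than only epimorphisms onto $M$. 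This is where I would follow Quillen's argument verbatim rather than reinvent it.
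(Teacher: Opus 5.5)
Citing Quillen is exactly what the paper does: it quotes the Resolution Theorem as a known result and gives no proof. Your Steps 1--2 (filtration by resolution length, the Schanuel-type lemma, reduction to length one, and $\mathsf{K}$ commuting with the union) correctly describe Quillen's reduction. However, two statements in your Step 3 sketch would fail as written.

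First, your appeal to additivity suggests the auxiliary category is the exact category $\mathcal{E}$ of admissible monomorphisms $P_1\rightarrowtail P_0$. On that reading, neither claimed equivalence holds. Already for $\mathcal{M}=\mathcal{P}$, $\mathcal{E}$ is the category of short exact sequences in $\mathcal{P}$, so additivity gives $\mathsf{K}(\mathcal{E})\simeq\mathsf{K}(\mathcal{P})\times\mathsf{K}(\mathcal{P})$.

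The standard argument instead factors $Q\mathcal{P}\to Q\mathcal{M}$ through the full subcategory $\mathcal{Q}'\subseteq Q\mathcal{M}$ on the objects of $\mathcal{P}$, and applies Theorem A to each piece. Some intermediate step is genuinely needed: for $\mathsf{proj}(\mathbb{Z})\subseteq\mathsf{FP}_0(\mathbb{Z})$, neither comma category of $Q\mathcal{P}\to Q\mathcal{M}$ at $\mathbb{Z}/2$ is connected.
\begin{itemize}
\item The fibres of $Q\mathcal{P}\to\mathcal{Q}'$ are posets of admissible layers $K\subseteq N$ of $P$ with $N/K\in\mathcal{P}$. They are contracted by $(K\subseteq N)\to(0\subseteq N)\leftarrow(0\subseteq 0)$.
\item The fibre of $\mathcal{Q}'\to Q\mathcal{M}$ under $M$ contains the category of admissible epimorphisms $P\twoheadrightarrow M$ with $P\in\mathcal{P}$ as a coreflective subcategory. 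The right adjoint is $(M\twoheadleftarrow N\rightarrowtail P)\mapsto(M\twoheadleftarrow N)$, where $N\in\mathcal{P}$ by the length-one hypothesis.
\end{itemize}
The second fibre is where your pullback argument belongs.

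Second, that category of epimorphisms onto $M$ is not (co)filtered. Its morphisms are epimorphisms over $M$, so distinct parallel arrows can never be equalized; for example, the automorphisms $\pm1$ of $\mathbb{Z}\twoheadrightarrow\mathbb{Z}/2$. It is nevertheless contractible, for a different reason. The assignment $P\mapsto P\times_M P_0$ is an endofunctor of it: base change preserves admissible epimorphisms, and $P\times_M P_0\in\mathcal{P}$ by your extension argument. This endofunctor comes with natural transformations to the identity and to the constant functor at $P_0$, which gives the contraction.
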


Recall that a ring $R$ is left \newterm{$n$-regular} if $\mathsf{FP}_n(R)=\mathsf{FP}_n^{<\infty}(R)$; see \cite{EP22}. As a consequence of Quillen's Resolution Theorem, if $R$ is both left $n$-coherent and left $n$-regular, then $\mathsf{K}_i(R)\cong \mathsf{K}_i(\mathsf{FP}_n(R))$ for all $i\ge 0$; see \cite{EP22}. Moreover, when $R$ is commutative, such a ring is $\mathsf{K}_0$-regular \cite[Cor. 8.2]{PA25}. The next proposition shows that, for $n$-regular rings, $n$-coherence is equivalent to $(n,d)$-coherence for every $d\in\mathbb{N}^*$. We omit the straightforward proof.

\begin{prop}
Let $R$ be left $n$-regular, where $n\in\mathbb{N}^*$. Then $R$ is left $n$-coherent if and only if it is left $(n,d)$-coherent for every $d\in\mathbb{N}^*$.
\end{prop}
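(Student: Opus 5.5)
The proof is a direct comparison of the definitions, using the $n$-regularity hypothesis $\mathsf{FP}_n(R)=\mathsf{FP}_n^{<\infty}(R)$ to turn a statement about all finitely $n$-presented modules into one about modules of bounded projective dimension.

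\emph{Forward direction.} This is already recorded in part (a) of Remark \ref{rmk: 1}. If $R$ is left $n$-coherent, then every $M\in\mathsf{FP}_n(R)$ is finitely $(n+1)$-presented. In particular this holds for every $M\in\mathsf{FP}_n^{\le d}(R)$, so $R$ is left $(n,d)$-coherent for every $d\in\mathbb{N}^*$. Regularity is not needed here.

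\emph{Converse.} Assume $R$ is left $(n,d)$-coherent for every $d\in\mathbb{N}^*$, and take any $M\in\mathsf{FP}_n(R)$. By left $n$-regularity, $M\in\mathsf{FP}_n^{<\infty}(R)$, so $\pd_R(M)=t$ for some $t\in\mathbb{N}$.

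Choose $d:=\max\{t,1\}\in\mathbb{N}^*$, which handles the case where $M$ is projective and $t=0$. Then $M\in\mathsf{FP}_n^{\le d}(R)$. Since $R$ is left $(n,d)$-coherent, $M$ is finitely $(n+1)$-presented. As $M$ was arbitrary, $\mathsf{FP}_n(R)\subseteq\mathsf{FP}_{n+1}(R)$, that is, $R$ is left $n$-coherent.

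One could also phrase the converse through Proposition \ref{prop:equiv}, writing $\mathsf{FP}_n(R)=\bigcup_{d}\mathsf{FP}_n^{\le d}(R)$ and applying $(1)\Leftrightarrow(8)$ for each $d$. The direct argument above is simpler.

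The only point needing care is that the bound $d$ must depend on $M$. The hypothesis is used for all $d$ simultaneously, because $n$-regularity only guarantees finite projective dimension for each module, not a uniform bound across $\mathsf{FP}_n(R)$.
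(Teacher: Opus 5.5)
Your proof is correct, and it is the straightforward argument the paper has in mind; the paper omits the proof on the grounds that it is straightforward. The forward direction is Remark~\ref{rmk: 1}(a). For the converse, $n$-regularity places each $M\in\mathsf{FP}_n(R)$ in $\mathsf{FP}_n^{\le d}(R)$ with $d=\max\{\pd_R(M),1\}$, so left $(n,d)$-coherence gives $M\in\mathsf{FP}_{n+1}(R)$.
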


As an application of $(n,d)$-coherence to algebraic $\mathsf{K}$-theory, we obtain the following result.

\begin{prop}\label{prop: k}
Let $R$ be a left $(n,d)$-coherent ring, with $n,d\in\mathbb{N}^*$. Then $\mathsf{K}_i(R)\cong
\mathsf{K}_i(\mathsf{FP}_n^{\le d}(R))$ for all $i\ge 0$.
\end{prop}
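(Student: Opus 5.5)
We apply the Resolution Theorem (Theorem \ref{thm: res}) with $\mathcal{P}=\mathsf{proj}(R)$ and $\mathcal{M}=\mathsf{FP}_n^{\le d}(R)$. Since $\mathsf{K}_i(R)$ is by definition $\mathsf{K}_i(\mathsf{proj}(R))$, this gives the claimed isomorphisms. Three things must be checked: that $\mathsf{FP}_n^{\le d}(R)$ is an exact category, and that $\mathsf{proj}(R)$ satisfies the two hypotheses of the theorem.

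\emph{Exact structure.} By the closure table in Section \ref{s:Finiteness conditions in modules of bounded projective dimension}, $\mathsf{FP}_n^{\le d}(R)$ is closed under extensions in $\lMod R$. An extension-closed full subcategory of an abelian category is an exact category, with conflations the short exact sequences whose three terms lie in the subcategory. Clearly $\mathsf{proj}(R)\subseteq\mathsf{FP}_n^{\le d}(R)$, and $\mathsf{proj}(R)$ is closed under extensions, since such extensions split.

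\emph{First hypothesis.} Let $M\rightarrowtail P\twoheadrightarrow P'$ be a conflation with $P,P'\in\mathsf{proj}(R)$. Because $P'$ is projective, the sequence splits, so $M$ is a direct summand of $P$. Hence $M\in\mathsf{proj}(R)$.

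\emph{Second hypothesis.} This is the step that uses $(n,d)$-coherence. Let $M\in\mathsf{FP}_n^{\le d}(R)$. By Proposition \ref{prop:equiv}, condition (8) gives $M\in\mathsf{FP}_\infty^{\le d}(R)$. So $M$ has a resolution $\cdots\to F_1\to F_0\twoheadrightarrow M$ by finitely generated free modules. Let $K_j$ be the syzygies. Each $K_j$ lies in $\mathsf{FP}_\infty(R)$, and $\pd_R(K_j)\le d-j$.

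Since $\pd_R(M)\le d$, the syzygy $K_{d-1}=\Img(F_{d}\to F_{d-1})$ is projective. It is also finitely generated, being a quotient of $F_d$. This yields a finite resolution
\[
0\to K_{d-1}\to F_{d-2}\to\cdots\to F_0\to M\to 0
\]
by objects of $\mathsf{proj}(R)$. (If $d=1$, the resolution is $0\to K_0\to F_0\to M\to 0$.)

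The resolution must be exact in $\mathcal{M}$, meaning it splices into conflations of $\mathcal{M}$. So every intermediate kernel $K_j$ must lie in $\mathsf{FP}_n^{\le d}(R)$. This holds because $K_j\in\mathsf{FP}_\infty(R)\subseteq\mathsf{FP}_n(R)$ and $\pd_R(K_j)\le d$. This is the one point where we rely on $M\in\mathsf{FP}_\infty(R)$ rather than only $M\in\mathsf{FP}_n(R)$. Without coherence, the syzygies would only be finitely $(n-j)$-presented, and for $j\ge 1$ they could fall outside $\mathcal{M}$.

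With all hypotheses verified, Theorem \ref{thm: res} gives $\mathsf{K}_i(R)=\mathsf{K}_i(\mathsf{proj}(R))\cong\mathsf{K}_i(\mathsf{FP}_n^{\le d}(R))$ for all $i\ge 0$.
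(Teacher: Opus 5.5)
Your proof is correct and follows the paper's argument: Proposition \ref{prop:equiv} puts every $M\in\mathsf{FP}_n^{\le d}(R)$ in $\mathsf{FP}_\infty^{\le d}(R)$, and then the Resolution Theorem is applied to $\mathsf{proj}(R)\hookrightarrow\mathsf{FP}_n^{\le d}(R)$; you simply verify its hypotheses in more detail. There is one off-by-one slip to fix: with $K_j=\Img(F_{j+1}\to F_j)$ the correct bound is $\pd_R(K_j)\le d-j-1$, which is exactly what makes $K_{d-1}$ projective, and the displayed resolution should read $0\to K_{d-1}\to F_{d-1}\to F_{d-2}\to\cdots\to F_0\to M\to 0$.
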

\begin{proof}
This follows from Proposition \ref{prop:equiv} and Theorem \ref{thm: res} applied to the inclusion
$\mathsf{proj}(R)\hookrightarrow\mathsf{FP}_n^{\leq d}(R)$.
\end{proof}

The following corollary is an immediate consequence of Remark \ref{rmk: 1}(a) and the previous proposition.

\begin{cor}
Let $R$ be a ring with $\wD(R)\le n<\infty$. Then
$\mathsf{K}_i(R)\cong\mathsf{K}_i(\mathsf{FP}_{n+1}^{\le d}(R))$ for all
$i\ge 0$ and every $d\in\mathbb{N}^*$. Moreover, $\mathsf{K}_i(R[t])\cong
\mathsf{K}_i(\mathsf{FP}_{n+2}^{\le d}(R[t]))$ for all $i\ge 0$ and every $d\in\mathbb{N}^{*}$.
\end{cor}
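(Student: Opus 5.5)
The plan is to reduce both assertions to Proposition \ref{prop: k}. For that we need to know that $R$ is left $(n+1,d)$-coherent and that $R[t]$ is left $(n+2,d)$-coherent, for every $d\in\mathbb{N}^*$. Both facts will come from the weak-dimension clause of Remark \ref{rmk: 1}(a), combined with its monotonicity in the first parameter.

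For the first assertion, put $k:=\wD(R)\le n$. By Remark \ref{rmk: 1}(a), which rests on \cite[Cor. 5.8]{L25}, $R$ is left $(k+1,d)$-coherent for every $d\in\mathbb{N}^*$. If $k<n$, the same remark says that left $(m,d)$-coherence passes to every larger first index, so $R$ is left $(n+1,d)$-coherent. If $k=n$ there is nothing to do. Proposition \ref{prop: k}, applied with $n+1$ in place of $n$, then gives
$$\mathsf{K}_i(R)\cong\mathsf{K}_i(\mathsf{FP}_{n+1}^{\le d}(R))\quad\text{for all } i\ge0.$$
Here $\mathsf{K}_i(R)$ denotes $\mathsf{K}_i(\mathsf{proj}(R))$, which is the meaning of the left-hand side in that proposition.

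For the second assertion, the only new input is the bound
$$\wD(R[t])\le \wD(R)+1\le n+1.$$
I would prove it with the flat-dimension analogue of the Hilbert syzygy argument. Let $M$ be a left $R[t]$-module. There is the standard short exact sequence of $R[t]$-modules
$$R[t]\otimes_R M\rightarrowtail R[t]\otimes_R M\twoheadrightarrow M,$$
in which the first map sends $f\otimes m\mapsto ft\otimes m-f\otimes tm$. Base change along the flat extension $R\to R[t]$ gives $\fd_{R[t]}(R[t]\otimes_R M)\le \fd_R(M)\le \wD(R)$. The long exact $\Tor$ sequence then yields $\fd_{R[t]}(M)\le \wD(R)+1$.

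With this bound, the first part applied to the ring $R[t]$, with $n+1$ in place of $n$, shows that $R[t]$ is left $(n+2,d)$-coherent for all $d\in\mathbb{N}^*$. Proposition \ref{prop: k} then gives
$$\mathsf{K}_i(R[t])\cong\mathsf{K}_i(\mathsf{FP}_{n+2}^{\le d}(R[t]))\quad\text{for all } i\ge0.$$

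The only step that is not a direct citation is this weak-dimension estimate for $R[t]$. It is classical, but it has to be checked that the displayed sequence is exact and is a sequence of $R[t]$-modules for noncommutative $R$. This holds because $t$ is central in $R[t]$. Everything else is bookkeeping with Remark \ref{rmk: 1}(a) and Proposition \ref{prop: k}.
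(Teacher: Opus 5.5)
Your proof is correct and follows the paper's route: the paper calls the corollary an immediate consequence of Remark \ref{rmk: 1}(a) and Proposition \ref{prop: k}. The one addition is that you prove the classical estimate $\wD(R[t])\le\wD(R)+1$, using the sequence $R[t]\otimes_R M\rightarrowtail R[t]\otimes_R M\twoheadrightarrow M$ and flat base change. The paper uses this estimate without comment for the statement about $R[t]$.
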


Coherent regular rings (i.e., rings which are both left $1$-regular and left $1$-coherent) form a natural class in algebraic $\mathsf{K}$-theory and satisfy several remarkable properties. For instance, $\mathsf{K}_{-1}(R)=0$ by \cite[Thm. 3.30]{AGH19}, and $\mathsf{K}_i(R)\cong\mathsf{K}_i(R[t_1,\cdots ,t_k])$ for $i\ge k-1$ by \cite[Cor. 5.3]{BL23}. Since coherent regularity can be difficult to verify, in the remainder of this section we will focus on the case $n=1$.

\begin{theorem}\label{thm: coh regular} 
Let $R$ be a ring, and let $\mathsf{CFP}_{\infty}(R)$ denote the class of all cyclic
$R$-modules of type $\mathsf{FP}_{\infty}(R)$. Then $R$ is left coherent if and only if every cyclic $R$-module is a direct limit of modules in $\mathsf{CFP}_{\infty}(R)$.
\end{theorem}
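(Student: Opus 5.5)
We prove the two implications separately, working with left modules. Recall that $R$ is left coherent exactly when every finitely generated left ideal is finitely presented. Equivalently, every finitely presented module is in $\mathsf{FP}_\infty(R)$, i.e.\ $R$ is left $1$-coherent.

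($\Rightarrow$) Assume $R$ is left coherent and let $R/I$ be a cyclic module. Write $I=\varinjlim I_\lambda$ as the directed union of its finitely generated subideals $I_\lambda$. Since direct limits are exact,
\[
R/I\cong\varinjlim R/I_\lambda .
\]
Each $R/I_\lambda$ is cyclic and finitely presented. By coherence, $\mathsf{FP}_1(R)=\mathsf{FP}_\infty(R)$, as noted after Remark \ref{rmk: 1}(a) with $d=\infty$ and in Proposition \ref{prop:equiv}. Hence $R/I_\lambda\in\mathsf{CFP}_\infty(R)$, and this direction is done.

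($\Leftarrow$) Assume every cyclic module is a direct limit of modules in $\mathsf{CFP}_\infty(R)$. It suffices to show that every finitely generated left ideal $I$ is finitely presented, which is the same as showing that $R/I$ is finitely presented. By hypothesis, $R/I\cong\varinjlim_{\lambda} C_\lambda$ with each $C_\lambda\in\mathsf{CFP}_\infty(R)$. Since $R/I$ is finitely presented, $\Hom_R(R/I,-)$ commutes with direct limits. Therefore the identity of $R/I$ factors through some $C_\lambda$, so $R/I$ is a direct summand of $C_\lambda$. The class $\mathsf{FP}_\infty(R)$ is closed under direct summands (it is thick, \cite[Prop. 1.7, Thm. 1.8]{BP}). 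Hence $R/I\in\mathsf{FP}_\infty(R)$, and in particular $R/I\in\mathsf{FP}_2(R)$. Consequently $I$ is finitely presented, and $R$ is left coherent.

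The one point needing care is the factorization argument in ($\Leftarrow$). Write $\mathrm{id}_{R/I}$ as the composite of some map $s\colon R/I\to C_\lambda$ with the canonical map $\pi_\lambda\colon C_\lambda\to R/I$. The equality $\pi_\lambda s=\mathrm{id}$ holds only after passing far enough along the directed system. This is handled by replacing $\lambda$ with a larger index $\mu$ and $s$ with its image in $C_\mu$. We never need the transition maps of the system to be surjective, since only the retraction onto $R/I$ is used. Cyclicity plays no role in this direction; it only ensures that the class of $C_\lambda$'s is small enough for the statement to be meaningful.
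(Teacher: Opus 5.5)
Your proof is correct and follows the paper's argument essentially verbatim: for ($\Rightarrow$) you write $R/I\cong\varinjlim R/I_\lambda$ over the finitely generated $I_\lambda\subseteq I$ and use $\mathsf{FP}_1(R)=\mathsf{FP}_\infty(R)$, and for ($\Leftarrow$) you use that $\Hom_R(R/I,-)$ commutes with direct limits to split $R/I$ off some $C_\lambda$, then conclude by closure of $\mathsf{FP}_\infty(R)$ under direct summands. One small remark on your last paragraph: surjectivity of the canonical map $\varinjlim\Hom_R(R/I,C_\lambda)\to\Hom_R(R/I,\varinjlim C_\lambda)$ already produces $s$ with $\pi_\lambda s=\mathrm{id}$, so you do not need to pass further along the system to obtain this equality.
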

\begin{proof} 
Assume that every cyclic $R$-module is a direct limit of cyclic modules in $\mathsf{FP}_{\infty}(R)$, and let $I$ be a finitely generated ideal of $R$. Since $R/I$ is cyclic, there exists a directed system $(M_\lambda)_{\lambda\in\Lambda}$ of modules in
$\mathsf{CFP}_{\infty}(R)$ such that $R/I\cong\varinjlim_{\lambda\in\Lambda}M_\lambda$. As $R/I$ is finitely presented, the natural isomorphism $\Hom_R(R/I,\varinjlim_{\lambda\in\Lambda}M_\lambda)\cong \varinjlim_{\lambda\in\Lambda}\Hom_R(R/I,M_\lambda)$ shows that the identity map of $R/I$ factors through some $M_\lambda$. Hence $R/I$ is a direct summand of $M_\lambda$, and therefore is of type $\mathsf{FP}_{\infty}(R)$. Consequently, $I$ is finitely presented. Conversely, assume that $R$ is left coherent. Let $M\cong R/I$ be a cyclic $R$-module. Write $I=\varinjlim_{\lambda\in\Lambda}I_\lambda$, where each $I_\lambda$ is a finitely generated ideal of $R$ and $I_\lambda\subseteq I$. Since filtered direct limits are exact, passing to direct limits in the short exact sequences $I_\lambda\rightarrowtail R\twoheadrightarrow R/I_\lambda$ yields the following commutative diagram with exact rows: 
$$\begin{tikzcd}
 &
\varinjlim\limits_{\lambda} I_\lambda
\arrow[r, tail]
\arrow[d, "\cong"'] &
\varinjlim\limits_{\lambda} R
\arrow[r, two heads]
\arrow[d, "\cong"'] &
\varinjlim\limits_{\lambda} R/I_\lambda
\arrow[d, "\cong"] &\\
 & I\arrow[r, tail] & R\arrow[r, two heads] & R/I. & 
\end{tikzcd}$$

It follows that $R/I\cong\varinjlim_{\lambda\in\Lambda}R/I_\lambda$. Since each $I_\lambda$ is finitely generated and $R$ is left coherent, every $R/I_\lambda$ is finitely presented, hence of type $\mathsf{FP}_{\infty}(R)$. 
\end{proof}

Recall that a left coherent ring $R$ is left regular if and only if every finitely generated ideal of $R$ has finite projective dimension.

\begin{cor}\label{cor: coh regular}
Let $R$ be a ring. Then every cyclic $R$-module is a direct limit of modules in $\mathsf{CFP}_\infty^{<\infty}(R)$ if and only if $R$ is left coherent regular.
\end{cor}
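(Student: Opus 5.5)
We reduce both directions to Theorem \ref{thm: coh regular}, using that $\mathsf{CFP}_\infty^{<\infty}(R)\subseteq\mathsf{CFP}_\infty(R)$ and keeping track of projective dimensions along the way.

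\emph{($\Rightarrow$)} Assume every cyclic module is a direct limit of modules in $\mathsf{CFP}_\infty^{<\infty}(R)$. These modules lie in $\mathsf{CFP}_\infty(R)$, so Theorem \ref{thm: coh regular} shows that $R$ is left coherent.

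For regularity, let $I$ be a finitely generated ideal. Then $R/I$ is finitely presented, and we write $R/I\cong\varinjlim M_\lambda$ with $M_\lambda\in\mathsf{CFP}_\infty^{<\infty}(R)$. As in the proof of Theorem \ref{thm: coh regular}, the isomorphism
$$\Hom_R(R/I,\varinjlim M_\lambda)\cong\varinjlim\Hom_R(R/I,M_\lambda)$$
lets the identity of $R/I$ factor through some $M_\lambda$. Hence $R/I$ is a direct summand of $M_\lambda$.

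Since $\mathsf{P}^{<\infty}(R)$ is closed under direct summands, $\pd_R(R/I)<\infty$. From $I\rightarrowtail R\twoheadrightarrow R/I$ we then get $\pd_R(I)<\infty$. By the recalled criterion (a left coherent ring is left regular if and only if every finitely generated ideal has finite projective dimension), $R$ is left coherent regular.

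\emph{($\Leftarrow$)} Assume $R$ is left coherent regular, and take $M\cong R/I$. As in Theorem \ref{thm: coh regular}, write $R/I\cong\varinjlim R/I_\lambda$, where the $I_\lambda$ run over the finitely generated ideals contained in $I$. By coherence each $R/I_\lambda$ is finitely presented, hence lies in $\mathsf{FP}_\infty(R)$ because $\mathsf{FP}_1(R)=\mathsf{FP}_\infty(R)$ for coherent rings. Regularity gives $\pd_R(I_\lambda)<\infty$, so $\pd_R(R/I_\lambda)\le\pd_R(I_\lambda)+1<\infty$. Thus each $R/I_\lambda\in\mathsf{CFP}_\infty^{<\infty}(R)$.

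The only point needing care is the regularity criterion invoked in the forward direction. I would use it in the form just recalled before the corollary: finite projective dimension of all finitely generated ideals suffices. If one instead insisted on the definition \emph{every finitely presented module has finite projective dimension}, one would need an induction on the number of generators of a finitely presented module. That induction uses filtrations by cyclic finitely presented quotients (available under coherence), together with closure of $\mathsf{P}^{<\infty}(R)$ under extensions. This is the step I would expect to cost the most if it were not already available.
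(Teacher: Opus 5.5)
Your proof is correct and follows the paper's argument. Coherence comes from Theorem \ref{thm: coh regular}; regularity comes from factoring the identity of $R/I$ through some $M_\lambda$, so that $R/I$, and hence $I$, has finite projective dimension. The converse, which the paper calls immediate, goes through $R/I\cong\varinjlim R/I_\lambda$ exactly as you spell out.

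Your closing aside about the alternative definition of regularity is sound but unnecessary. The paper recalls the finitely-generated-ideal criterion just before the corollary, and you rely on it the same way.
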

\begin{proof}  By Theorem \ref{thm: coh regular}, it remains to verify the regularity
condition. Suppose that every cyclic $R$-module is a direct limit of modules in $\mathsf{CFP}_{\infty}^{<\infty}(R)$. Let $I$ be a finitely generated ideal of $R$. Then there exists a directed system
$(M_\lambda)_{\lambda\in\Lambda}$ in
$\mathsf{CFP}_{\infty}^{<\infty}(R)$ such that
$R/I\cong\varinjlim_{\lambda\in\Lambda}M_\lambda$. Since $R/I$ is finitely
presented, $\id_{R/I}$ factors through some $M_\lambda$. Hence $R/I$ is
a direct summand of $M_\lambda$, and therefore has finite projective
dimension. Consequently, $I$ has finite projective dimension. Thus $R$
is left regular. The converse is immediate.
\end{proof}

\begin{rmk}
Using \cite[Prop. 2.4]{BGH14} and arguing as above, one also obtains that every $R$-module is a direct limit of modules in $\mathsf{FP}_\infty^{<\infty}(R)$ if and only if $R$ is left coherent regular.
\end{rmk}

A special class of coherent regular rings consists of those coherent rings $R$ 
for which the projective dimension of the class of finitely presented 
$R$-modules is uniformly bounded. That is, there exists $d\in\mathbb{N}$ 
such that $\mathsf{FP}_1(R)=\mathsf{FP}_1^{\le d}(R).$ Examples of such rings include coherent rings of finite weak dimension. These rings will be called left \newterm{coherent uniformly $d$-regular}; see \cite{PA25}. Recall that an $R$-module $P$ is \newterm{$\mathsf{FP}_1$-projective} if $\Ext^1_R(P,M)=0$ for every $\mathsf{FP}_1$-injective (or absolutely pure) $R$-module $M$, and we denote by $\mathsf{FP}_1\text{-}\mathsf{Proj}(R)$ the class of all $\mathsf{FP}_1$-projective $R$-modules. We now characterize left coherent uniformly $d$-regular rings in terms of $\mathcal{C}$-periodic modules\footnote{An $R$-module $M$ is \newterm{$\mathcal{C}$-periodic} if there exists a short exact sequence $M\rightarrowtail C\twoheadrightarrow M$ with $C\in\mathcal{C}$.}.

\begin{prop}
Let $R$ be a ring and let $d\in\mathbb{N}$. Then $R$ is left coherent uniformly $d$-regular if and only if every $\mathsf{FP}_1\text{-}\mathsf{Proj}(R)$-periodic $R$-module is $\mathsf{FP}_1$-projective and has projective dimension at most $d$.
\end{prop}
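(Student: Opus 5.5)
We prove the two implications separately, using the cotorsion pair $(\mathsf{FP}_1\text{-}\mathsf{Proj}(R),\mathsf{FP}_1\text{-}\mathsf{Inj}(R))$. It is cogenerated by the set of (representatives of) $\mathsf{FP}_1(R)$, hence complete. Its left class consists of direct summands of $\mathsf{FP}_1(R)$-filtered modules. When $R$ is left coherent, it is also hereditary, since $\mathsf{FP}_1(R)$ is then resolving, or equivalently absolutely pure modules are closed under cokernels of monomorphisms.

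\emph{($\Rightarrow$).} Assume $R$ is left coherent and $\mathsf{FP}_1(R)=\mathsf{FP}_1^{\le d}(R)$.

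First we show every $\mathsf{FP}_1$-projective module has projective dimension at most $d$. Every $\mathsf{FP}_1(R)$-filtered module has projective dimension at most $d$: each successive quotient does, so Auslander's lemma on transfinite extensions ($\Ext^{d+1}(-,N)$ vanishes on filtrations) applies. The class $\mathsf{P}^{\le d}(R)$ is closed under direct summands, so the claim follows.

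Next we show $\mathsf{FP}_1\text{-}\mathsf{Proj}(R)$-periodic modules are $\mathsf{FP}_1$-projective. Let $M\rightarrowtail C\twoheadrightarrow M$ with $C$ $\mathsf{FP}_1$-projective. Then $\pd_R(C)\le d$. Splicing the sequence gives, for each absolutely pure $E$ and each $i\ge1$,
$$\Ext^{i}_R(M,E)\cong\Ext^{i+1}_R(M,E).$$
Indeed, the long exact sequence has $\Ext^{j}_R(C,E)=0$ for $j\ge1$, which holds by heredity. Iterating, $\Ext^1_R(M,E)\cong\Ext^{k}_R(M,E)$ for all $k\ge 1$.

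It remains to show $M$ has finite projective dimension, so that these groups vanish for large $k$. From $M\rightarrowtail C$ and $C\twoheadrightarrow M$ one gets $\pd M\le \max(\pd C,\pd M -1)$-type inequalities, but these only bound $\pd M$ if it is already finite. This is the \textbf{main obstacle}.

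I would handle it with the standard periodicity trick, in the style of Benson–Goodearl and Bazzoni–Cortés-Izurdiaga–Estrada. Write $M$ as the colimit of the tower
$$C\to C\oplus C\to\cdots$$
or, more precisely, build an $\mathsf{FP}_1\text{-}\mathsf{Proj}$-filtration. Form the infinite direct sum of copies of the sequence. Eilenberg's swindle gives $M\oplus C^{(\mathbb{N})}$ as an extension, exhibiting $M$ as a direct summand of a module filtered by copies of $C$. This shows $M$ is $\mathsf{FP}_1$-projective. If the swindle fails, the fallback is to use that $\Ext^{d+1}_R(C,-)=0$ to show $\Ext^{k}_R(M,-)\cong\Ext^{k+1}_R(M,-)$ for $k>d$. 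Then I would argue via the known theorem that $\mathcal{C}$-periodic modules lie in $\mathcal{C}$ when $\mathcal{C}$ is the left class of a hereditary cotorsion pair cogenerated by finitely presented modules with bounded projective dimension (a Šťovíček-type result).

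Once $M$ is $\mathsf{FP}_1$-projective, the first step gives $\pd_R(M)\le d$.

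\emph{($\Leftarrow$).} Assume every $\mathsf{FP}_1\text{-}\mathsf{Proj}(R)$-periodic module is $\mathsf{FP}_1$-projective with projective dimension at most $d$.

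For the bound on projective dimension: every $\mathsf{FP}_1$-projective module $P$ is periodic, via the split sequence $P\rightarrowtail P^{(\mathbb{N})}\twoheadrightarrow P^{(\mathbb{N})}$ composed appropriately. More simply, $P^{(\mathbb{N})}$ is periodic via $P^{(\mathbb{N})}\rightarrowtail P^{(\mathbb{N})}\oplus P^{(\mathbb{N})}\twoheadrightarrow P^{(\mathbb{N})}$, and $P$ is a direct summand of it. Hence $\pd_R(P)\le d$, and in particular $\mathsf{FP}_1(R)=\mathsf{FP}_1^{\le d}(R)$.

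For coherence: given a finitely generated ideal $I$, we must show $R/I$ lies in $\mathsf{FP}_2(R)$; it suffices to show $I$ is finitely presented. Let $K$ be the first syzygy of a finitely presented $M$. I would use that periodic modules being $\mathsf{FP}_1$-projective forces the pair to be hereditary: syzygies of $\mathsf{FP}_1$-projectives are again $\mathsf{FP}_1$-projective, via the periodic-module argument applied to $\Omega M\oplus\ldots$, as in Šťovíček's characterization. Then $K$ is finitely generated and $\mathsf{FP}_1$-projective, hence a direct summand of a filtered module by finitely presented ones. A Hill-lemma argument then makes $K$ finitely presented, which gives left coherence by Proposition~\ref{prop:equiv} (with $d=\infty$, $n=1$).
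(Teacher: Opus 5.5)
The gap is the step you flag yourself in $(\Rightarrow)$: showing that an $\mathsf{FP}_1\text{-}\mathsf{Proj}(R)$-periodic module $M$ is $\mathsf{FP}_1$-projective. The rest of $(\Rightarrow)$ is fine, namely Auslander's lemma and the shift $\Ext^1_R(M,E)\cong\Ext^k_R(M,E)$. None of your three remedies works.
\begin{itemize}
\item \emph{The swindle.} It uses nothing but $M\rightarrowtail C\twoheadrightarrow M$ and closure of the left class under filtrations and summands. So it would make the left class of every cotorsion pair closed under periodicity. But over $R=\mathbb{Z}/4\mathbb{Z}$ the sequence $\mathbb{Z}/2\rightarrowtail\mathbb{Z}/4\twoheadrightarrow\mathbb{Z}/2$ makes $\mathbb{Z}/2$ a $\mathsf{Proj}(R)$-periodic module. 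Modules filtered by copies of $\mathbb{Z}/4$ are free, and $\mathbb{Z}/2$ is not projective.
\item \emph{The general result you fall back on.} The same example refutes it as you formulate it. The pair $(\mathsf{Proj}(R),\lMod R)$ is hereditary and cogenerated by the finitely presented module $R$, of projective dimension $0$, yet its left class is not closed under periodicity.
\item \emph{Dimension shifting.} The isomorphisms $\Ext^k_R(M,-)\cong\Ext^{k+1}_R(M,-)$ for $k>d$ are compatible with $\pd_R(M)=\infty$. Again $\mathbb{Z}/2$ over $\mathbb{Z}/4\mathbb{Z}$ with $d=0$ is an example, so they force no vanishing.
\end{itemize}
The step is genuinely deep. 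For a von Neumann regular ring and $d=0$ one has $\mathsf{FP}_1(R)=\mathsf{proj}(R)$ and $\mathsf{FP}_1\text{-}\mathsf{Proj}(R)=\mathsf{Proj}(R)$, and every module is flat. So $(\Rightarrow)$ contains the Benson--Goodearl theorem for such rings.

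What is needed is a periodicity theorem for the cotorsion pair cogenerated by \emph{all} finitely presented modules over a left coherent ring. It says every $\mathsf{FP}_1\text{-}\mathsf{Proj}(R)$-periodic module is $\mathsf{FP}_1$-projective. There is no conflict with $\mathbb{Z}/4\mathbb{Z}$, where every module is $\mathsf{FP}_1$-projective. This is the Bazzoni--Hrbek--Positselski result the paper cites as \cite[Cor. 4.10]{BHP24}. With it, your Auslander-lemma step finishes $(\Rightarrow)$ exactly as the paper does via \cite[Prop. 9.4]{EFHO25}.

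Your $(\Leftarrow)$ is correct and differs from the paper, which gets coherence by citing \cite[Cor. 4.10]{BHP24} again. Your direct argument works once the periodic module is written down. Take $K\rightarrowtail F\twoheadrightarrow M$ with $M$ finitely presented, and let $\Omega^iK$ be the syzygies in a projective resolution $\cdots\to P_1\to P_0\twoheadrightarrow K$. The module $N=M\oplus K\oplus\Omega K\oplus\Omega^2K\oplus\cdots$ fits into $N\rightarrowtail M\oplus F\oplus\bigoplus_iP_i\twoheadrightarrow N$. Hence $K$ is finitely generated and $\mathsf{FP}_1$-projective, so finitely presented by the Hill lemma or \cite[Thm. 2.1]{ODL14}. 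For the dimension bound, the split sequence $P\rightarrowtail P\oplus P\twoheadrightarrow P$ already suffices, as in the paper.
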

\begin{proof}
If $R$ is left coherent uniformly $d$-regular, then every $\mathsf{FP}_1\text{-}\mathsf{Proj}(R)$-periodic $R$-module is $\mathsf{FP}_1$-projective by \cite[Cor. 4.10]{BHP24}, and thus has projective dimension at most $d$ by \cite[Prop. 9.4]{EFHO25}.
Conversely, left coherence of $R$ follows again from \cite[Cor. 4.10]{BHP24}. For any $M\in\mathsf{FP}_1(R)$, the split sequence $M\rightarrowtail M\oplus M\twoheadrightarrow M$ shows that $M$ is $\mathsf{FP}_1\text{-}\mathsf{Proj}(R)$-periodic, so $\pd_R(M)\le d$.
\end{proof}

\begin{rmk} Let $R$ be a ring.
\begin{enumerate}\renewcommand{\labelenumi}{(\alph{enumi})}
\item By \cite[Cor. 2.13]{ET11}, if an $R$-module $M$ satisfies $\fd_R(M)=\pd_R(M)<\infty$, then $\pd_R(M)\leq\id_R(R)$. Since $\fd_R(M)=\pd_R(M)$ for every $M\in\mathsf{FP}_\infty(R)$, it follows that, for a left coherent ring $R$ with  $d=\id_R(R)<\infty$, $R$ is left regular if and only if it is left uniformly $d$-regular. 
\item If the polynomial ring $R[t]$ is left coherent and $\id_R(R[t])<\infty$, then $R[t]$ is regular if and only if it is uniformly $d$-regular for some $d\in\mathbb{N}$. This follows from part (a) together with \cite[Cor. 2.2]{EEI08}.
\end{enumerate}
\end{rmk}


\section{$\mathsf{FP}_n^{\le d}$-injective and $\mathsf{FP}_n^{\le d}$-projective modules}\label{s:inj and proj relativos} 
Following Lee \cite{Lee02}, an $R$-module $E$ is said to be \newterm{$d$-absolutely pure}, for $d\in\mathbb{N}^*\cup\{\infty\}$, 
if $\Ext_R^1(K,E)=0$ for every $K\in\mathsf{FP}_1^{\le d}(R)$. Motivated by this concept, we introduce the following generalization.

\begin{dfn}
Let $R$ be a ring, and let $n\in \mathbb{N}^*$ and $d\in\mathbb{N}^*\cup\{\infty\}$. An $R$-module $E$ is said to be 
\newterm{$\mathsf{FP}_n^{\le d}$-injective} if $\Ext_R^1(K,E)=0$ for every $K\in\mathsf{FP}_n^{\le d}(R)$.
\end{dfn}

Denote by $\mathsf{FP}_n^{\le d}\text{-}\mathsf{Inj}(R)$ the class of all $\mathsf{FP}_n^{\le d}$-injective $R$-modules. For any $n,d\in\mathbb{N}^*$, we have the inclusion $\mathsf{FP}_n\text{-}\mathsf{Inj}(R)\subseteq 
\mathsf{FP}_n^{\le d}\text{-}\mathsf{Inj}(R)$. Here, $\mathsf{FP}_n\text{-}\mathsf{Inj}(R)$ denotes 
the class of $\mathsf{FP}_n$-injective $R$-modules introduced by Zhou in \cite{Zhou04} (that is, $R$-modules $E$ such that $\Ext_R^1(K,E)=0$ for all $K\in\mathsf{FP}_n(R)$). In particular, when $n=1$, the $\mathsf{FP}_1^{\le d}$-injective modules coincide with the $d$-absolutely pure modules.\\

\begin{rmk}~\
\begin{enumerate}\renewcommand{\labelenumi}{(\alph{enumi})}
\item Let $R$ be the ring from Example \ref{ex: 1}. Since $\mathsf{FP}_1^{\le d}(R)=\mathsf{proj}(R)$ for every $d\in \mathbb{N}^*$, it follows that $\mathsf{FP}_1^{\le d}\text{-}\mathsf{Inj}(R)=\lMod R$. On the other hand, \cite[Ex. 5.7]{BP} shows that $\mathsf{FP}_1\text{-}\mathsf{Inj}(R)\subsetneq \lMod R$. Therefore, $\mathsf{FP}_1\text{-}\mathsf{Inj}(R)\subsetneq \mathsf{FP}_1^{\le d}\text{-}\mathsf{Inj}(R)$.
\item Following \cite[Def. 3.1]{GD09}, an $R$-module $M$ is called
\newterm{$\mathsf{P}^{\le d}$-injective}, for $d \in \mathbb{N}^*$ if
$\Ext_R^1(K,M)=0$ for every $R$-module $K$ with $\pd_R(K)\le d$.
Since $\mathsf{FP}_n^{\le d}(R)\subseteq \mathsf{P}^{\le d}(R)$, every
$\mathsf{P}^{\le d}$-injective $R$-module is $\mathsf{FP}_n^{\le d}$-injective. In particular, by \cite[Rmks. 3.2 and 3.3]{GD09}, Gorenstein injective modules and $d$-th cosyzygies of $R$-modules are examples of $\mathsf{FP}_n^{\leq d}$-injective modules.
\end{enumerate}
\end{rmk}

In \cite{MD06}, Mao and Ding defined a short exact sequence of $R$-modules $\mathfrak{S}:$
$A\rightarrowtail B \twoheadrightarrow C$ to be \newterm{$n$-pure} if the induced sequence $\Hom_R(M,B)\twoheadrightarrow \Hom_R(M,C)$
is exact for every finitely $n$-presented $R$-module $M$.

\begin{dfn}
Let $R$ be a ring, $n\in\mathbb{N}^*$ and $d\in\mathbb{N}^*\cup\{\infty\}$.  
A short exact sequence $A\rightarrowtail B\twoheadrightarrow C$
of $R$-modules is said to be \newterm{$\mathsf{FP}_n^{\le d}$-pure} if the induced sequence 
$\Hom_R(M,B)\twoheadrightarrow \Hom_R(M,C)$ is exact for every $M\in\mathsf{FP}_n^{\le d}(R)$
\end{dfn}

In particular, when $n=1$, this notion coincides with the notion of purity introduced by Lee in \cite{Lee03}. The following characterization is immediate.

\begin{prop} Let $R$ be a ring, and let $n\in\mathbb{N}^*$ and $d\in\mathbb{N}^*\cup\{\infty\}$. 
For an $R$-module $E$, the following statements are equivalent:
\begin{enumerate}\renewcommand{\labelenumi}{(\alph{enumi})}
\item $E$ is $\mathsf{FP}_n^{\le d}$-injective.
\item Every exact sequence $E \rightarrowtail E_1 \twoheadrightarrow E_2$ of  $R$-modules is  $\mathsf{FP}_n^{\le d}$-pure.
\item There exists an $\mathsf{FP}_n^{\le d}$-pure exact sequence 
$ E \rightarrowtail E_1 \twoheadrightarrow E_2 $ of  $R$-modules with $E_1$ injective.
\item There exists an $\mathsf{FP}_n^{\le d}$-pure exact sequence 
$E \rightarrowtail E_1 \twoheadrightarrow E_2$ of $R$-modules with $E_1$ $\mathsf{FP}_n^{\le d}$-injective.
\end{enumerate}
\end{prop}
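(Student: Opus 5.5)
We prove the cycle (a) $\Rightarrow$ (b) $\Rightarrow$ (c) $\Rightarrow$ (d) $\Rightarrow$ (a). The only tool needed is the long exact $\Ext$-sequence obtained by applying $\Hom_R(M,-)$, for $M\in\mathsf{FP}_n^{\le d}(R)$, to a short exact sequence $E\rightarrowtail E_1\twoheadrightarrow E_2$:
$$\Hom_R(M,E_1)\to\Hom_R(M,E_2)\to\Ext^1_R(M,E)\to\Ext^1_R(M,E_1).$$

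For (a) $\Rightarrow$ (b), assume $E$ is $\mathsf{FP}_n^{\le d}$-injective. Then $\Ext^1_R(M,E)=0$ for every $M\in\mathsf{FP}_n^{\le d}(R)$, so exactness of the displayed sequence forces $\Hom_R(M,E_1)\to\Hom_R(M,E_2)$ to be surjective. Hence every such sequence is $\mathsf{FP}_n^{\le d}$-pure.

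For (b) $\Rightarrow$ (c), embed $E$ into an injective module $E_1$, for instance its injective envelope. The resulting sequence $E\rightarrowtail E_1\twoheadrightarrow E_1/E$ is $\mathsf{FP}_n^{\le d}$-pure by (b).

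The implication (c) $\Rightarrow$ (d) is trivial, since every injective module is $\mathsf{FP}_n^{\le d}$-injective.

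For (d) $\Rightarrow$ (a), fix $M\in\mathsf{FP}_n^{\le d}(R)$ and read the same exact sequence at $\Ext^1_R(M,E)$. Purity says the map $\Hom_R(M,E_1)\to\Hom_R(M,E_2)$ is surjective, so the connecting map $\Hom_R(M,E_2)\to\Ext^1_R(M,E)$ is zero. Thus $\Ext^1_R(M,E)$ injects into $\Ext^1_R(M,E_1)$, which vanishes because $E_1$ is $\mathsf{FP}_n^{\le d}$-injective. Therefore $\Ext^1_R(M,E)=0$, so $E$ is $\mathsf{FP}_n^{\le d}$-injective.

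There is no real obstacle in this argument. The only point to state carefully is in (d) $\Rightarrow$ (a): purity is used precisely to kill the connecting homomorphism, and only then does the vanishing of $\Ext^1_R(M,E_1)$ transfer to $\Ext^1_R(M,E)$.
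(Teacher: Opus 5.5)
Your proof is correct. The paper gives no written argument and simply calls this characterization immediate; your cycle (a)$\Rightarrow$(b)$\Rightarrow$(c)$\Rightarrow$(d)$\Rightarrow$(a), using the long exact $\Ext$-sequence obtained from $\Hom_R(M,-)$ with $M\in\mathsf{FP}_n^{\le d}(R)$, is exactly the standard argument that remark has in mind.
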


We next prove a lifting result for $\mathsf{FP}_n^{\le d}$-injective modules, using arguments analogous to those in \cite[Prop. 2.3]{XK19}. When $d\ge\gD(R)$, this recovers and extends \cite[Prop. 4.4]{BP19}.

\begin{prop}\label{prop:FPn-d-injective}
Let $R$ be a ring, let $n\in\mathbb{N}^*$, and let
$d\in\mathbb{N}^* \cup\{\infty\}$. For an $R$-module $E$, the following
statements are equivalent:
\begin{enumerate}\renewcommand{\labelenumi}{(\alph{enumi})}
\item $E$ is $\mathsf{FP}_n^{\le d}$-injective.
\item For every diagram
$$\begin{tikzcd}
N \arrow[r, tail, "g"] \arrow[d,"f"'] & P \arrow[dl,dashed,"h"]\\
E &
\end{tikzcd}$$
where $P$ is a projective $R$-module and $N \subseteq P$ is a submodule with
$N \in \mathsf{FP}_{n-1}^{\le d-1}(R)$, there exists a homomorphism
$h \colon P \to E$ such that $h \circ g = f$.
\item For every diagram
$$\begin{tikzcd}
N \arrow[r, tail, "g"] \arrow[d,"f"'] & P \arrow[dl,dashed,"h"]\\
E &
\end{tikzcd}$$
where $P$ is a finitely generated projective $R$-module and $N\subseteq P$ is a submodule with
$N \in \mathsf{FP}_{n-1}^{\le d-1}(R)$, there exists a homomorphism
$h \colon P\to E$ such that $h\circ g=f$.
\end{enumerate}
\end{prop}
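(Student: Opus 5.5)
The plan is to prove the cycle (a)$\Rightarrow$(b)$\Rightarrow$(c)$\Rightarrow$(a). Every step comes down to the long exact sequence
$$\Hom_R(P,E)\to\Hom_R(N,E)\to\Ext^1_R(P/N,E)\to\Ext^1_R(P,E)=0$$
attached to $N\overset{g}{\rightarrowtail} P\twoheadrightarrow P/N$. The only extra ingredient is a reduction from an arbitrary projective to a finitely generated free module.

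First, a basic observation. Let $F$ be finitely generated projective and $N\subseteq F$ with $N\in\mathsf{FP}_{n-1}^{\le d-1}(R)$. Splicing a finite $(n-1)$-presentation of $N$ with $N\rightarrowtail F$ gives $F/N\in\mathsf{FP}_n(R)$. From the short exact sequence we get $\pd_R(F/N)\le \pd_R(N)+1\le d$ (trivially so if $d=\infty$). Hence $F/N\in\mathsf{FP}_n^{\le d}(R)$.

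(a)$\Rightarrow$(b). Let $P$ be projective and $N\subseteq P$ as in (b). Write $G=P\oplus Q$ with $G$ free. Since $N$ is finitely generated, it lies in a finitely generated free summand $F$ of $G$, say $G=F\oplus F'$. By the observation, $F/N\in\mathsf{FP}_n^{\le d}(R)$, so $\Ext^1_R(F/N,E)=0$ by (a). The exact sequence then shows that $f$ extends to some $h_0\colon F\to E$. Extend $h_0$ by zero on $F'$ to get a map $G\to E$, and restrict it to $P$ to obtain $h$. Then $h\circ g=f$, because $N\subseteq F\cap P$.

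(b)$\Rightarrow$(c) is immediate, since (c) is the special case of (b) with $P$ finitely generated.

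(c)$\Rightarrow$(a). Let $K\in\mathsf{FP}_n^{\le d}(R)$ and choose $N\rightarrowtail F\twoheadrightarrow K$ with $F$ finitely generated free and $N\in\mathsf{FP}_{n-1}(R)$. I need to check that $\pd_R(N)\le d-1$. If $K$ is projective, then $N$ is projective and $\pd_R(N)=0\le d-1$, using $d\ge 1$. Otherwise $\pd_R(N)=\pd_R(K)-1\le d-1$. Now (c) says that $\Hom_R(F,E)\to\Hom_R(N,E)$ is surjective, and $\Ext^1_R(F,E)=0$, so the long exact sequence gives $\Ext^1_R(K,E)=0$.

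The only step that is not formal is the reduction in (a)$\Rightarrow$(b). There the quotient $P/N$ need not be finitely presented, so (a) cannot be applied to it directly. This is why I pass to a finitely generated free summand containing $N$ and extend by zero on the complement.
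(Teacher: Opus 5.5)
Your proof is correct and follows essentially the same route as the paper. In (a)$\Rightarrow$(b) the paper also places $N$ in a finitely generated free summand $F_1$, extends $f$ using $\Ext^1_R(F_1/N,E)=0$, and reduces the projective case to the free one via $F_0=P\oplus Q$; it does this through $F_0/N\cong P/N\oplus Q$ rather than by restricting the map as you do, which is only a cosmetic difference, and your explicit check that $\pd_R(N)\le d-1$ in (c)$\Rightarrow$(a) fills in a step the paper calls immediate.
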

\begin{proof} ~\
\begin{itemize}
\item (a)$\Rightarrow$(b): Assume that $E$ is $\mathsf{FP}_n^{\le d}$-injective, and let $N$ be a submodule of a projective $R$-module $P$ such that $N$ is finitely $(n-1)$-presented and  $\pd_R(N) \le d-1$. We first consider the case where $P$ is a free $R$-module. Since $N$ is finitely $(n-1)$-presented, it is, in particular, finitely generated. As $N$ is a submodule of the free module $P$, there exist free $R$-modules $F_1$ and $F_2$, with $F_1$ finitely generated, such that $P\cong F_1 \oplus F_2$ and $N \subseteq F_1$. Then $F_1/N$ is finitely $n$-presented with $\pd_R(F_1/N)\le d$.  
Therefore $\Ext^1_R(F_1/N, E)=0$. It follows that, for any homomorphism $f: N\to E$, there exists a homomorphism 
$\beta_0: F_1 \to E$  such that the following diagram commutes:
$$\begin{tikzcd}[row sep=huge, column sep=large]
 & N \arrow[r, tail, "g"] \arrow[d, equal] & P \arrow[r, two heads] \arrow[d, "p"] & P/N  \arrow[d] &  \\
& N \arrow[r, tail, "g"]\arrow[d, "f"'] & F_1 \arrow[r, two heads] \arrow[dl, dashed, "\beta_0"'] & F_1/N  &  \\
& E & & &
\end{tikzcd}$$
Consequently, there exists $h=\beta_0 \circ p: P\to E$ such that $f =h\circ g$. Hence  $\Ext^1_R(P/N, E) = 0.$ Now let $P$ be a projective module.  Then there exist a free module $F_0$ and a projective module $Q$ such that $F_0 \cong P \oplus Q$. Set $C:= P/N$ and $C_0:= F_0/N$. Consider the following commutative diagram with exact rows:
$$\begin{tikzcd}
 & N \arrow[r, tail, ""]\arrow[d, equal] & P \arrow[r, two heads] \arrow[d, tail, ""] & C  \arrow[d, tail, ""] &  \\
 & N \arrow[r, tail, ""] & F_0 \arrow[r, two heads] \arrow[d, two heads] & C_0  \arrow[d, two heads] &  \\
 & & Q \arrow[r, equal]  & Q  & 
\end{tikzcd}$$
Since $Q$ is projective, $C_0\cong C\oplus Q$. By the previous case, we have 
$\Ext^1_R(C_0,E)=0$, and hence $\Ext^1_R(C,E)=0.$\\
\item Finally, (b)$\Rightarrow$(c) and (c)$\Rightarrow$(a) are immediate.
\end{itemize}
\end{proof}

\begin{rmk}\label{rmk: complete} 
Let $R$ be a ring, $n\in\mathbb{N}^*$ and $d\in\mathbb{N}^*\cup\{\infty\}$.
\begin{enumerate}\renewcommand{\labelenumi}{(\alph{enumi})}
\item An $R$-module $E$ is $\mathsf{FP}_n^{\leq d}$-injective if and only if
$\Ext_R^1(P/N,E)=0$ for every projective $R$-module $P$ and every
submodule $N\subseteq P$ with $N\in\mathsf{FP}_{n-1}^{\leq d-1}(R)$.
\item Let $\mathcal{C}$ be a set of representatives of all finitely $n$-presented $R$-modules of projective dimension at most $d$. Then
$\mathcal{C}^{\perp_1}=\mathsf{FP}_n^{\le d}\text{-}\mathsf{Inj}(R)$.
By the Eklof-Trlifaj Theorem \cite{ET01}, the cotorsion pair
$\bigl({}^{\perp_1}(\mathsf{FP}_n^{\le d}\text{-}\mathsf{Inj}(R)),\;
\mathsf{FP}_n^{\le d}\text{-}\mathsf{Inj}(R)\bigr)$
is \newterm{complete}.  An $R$-module is called \newterm{$\mathsf{FP}_n^{\le d}$-projective} if it belongs to
${}^{\perp_1}(\mathsf{FP}_n^{\le d}\text{-}\mathsf{Inj}(R))$, and the class of all such
modules is denoted by $\mathsf{FP}_n^{\le d}\text{-}\mathsf{Proj}(R)$. Consequently,
every $R$-module admits a monomorphism into an $\mathsf{FP}_n^{\le d}$-injective module
with $\mathsf{FP}_n^{\le d}$-projective cokernel, and every module is a quotient of an
$\mathsf{FP}_n^{\le d}$-projective module by an $\mathsf{FP}_n^{\le d}$-injective
submodule. According to \cite[Cor. 3.2.4]{GT12}, $\mathsf{FP}_n^{\le d}\text{-}\mathsf{Proj}(R)$ consists of all direct summands of $\mathcal{C}$-filtered modules. Moreover, the class $\mathsf{FP}_n^{\le d}\text{-}\mathsf{Proj}(R)$ is closed under extensions, direct sums, direct summands, and filtrations, while
$\mathsf{FP}_n^{\le d}\text{-}\mathsf{Inj}(R)$ is closed under extensions, direct
products, and direct summands. If $n \ge 2$, the latter class is also closed under direct
limits; see \cite[Lem 2.9.2]{CD96}, and in particular under coproducts.
\item Fix an integer $n$. For each $d\in\mathbb{N}^*$, consider the family of cotorsion pairs
$$\bigl(\mathsf{FP}_n^{\le d}\text{-}\mathsf{Proj}(R),
\mathsf{FP}_n^{\le d}\text{-}\mathsf{Inj}(R)\bigr)_{d \in \mathbb{N}^*}.$$
By the lattice structure of cotorsion pairs; see \cite{GSW01}, the infimum of this family is
$$\left(\bigcap_{d\in\mathbb{N}^*} \mathsf{FP}_n^{\le d}\text{-}\mathsf{Proj}(R),
\left(\bigcap_{d\in\mathbb{N}^*} \mathsf{FP}_n^{\le d}\text{-}\mathsf{Proj}(R)\right)^{\perp_1}
\right),$$
which is generated by the class
$\bigcup_{d \in \mathbb{N}^*} \mathsf{FP}_n^{\le d}\text{-}\mathsf{Inj}(R).$
Furthermore, the supremum of this family is given by
$$\left(
{}^{\perp_1}\!\left(\bigcap_{d\in\mathbb{N}^*}
\mathsf{FP}_n^{\le d}\text{-}\mathsf{Inj}(R)\right),
\bigcap_{d\in\mathbb{N}^*} \mathsf{FP}_n^{\le d}\text{-}\mathsf{Inj}(R)
\right).$$
\item If $R$ is commutative, then every $\mathsf{FP}_n^{\leq d}$-injective
$R$-module is divisible. In particular, if $R$ is an integral domain with quotient
field $K\neq R$, then $K/R$ is $\mathsf{FP}_n^{\leq d}$-injective if and only if
$\Ext_R^1(N,R)=0$ for every submodule $N$ of a projective $R$-module with
$N\in\mathsf{FP}_{n-1}^{\leq d-1}(R)$. The latter equivalence follows by an argument analogous to
\cite[Thm. 2.3]{XK19}.
\item By \cite[Lem. 3.5]{ODL14}, if $R$ is commutative, $n\geq2$, and $M$ is an $R$-module, then $M\in \mathsf{FP}_n^{\le d}\text{-}\mathsf{Inj}(R)$ if $M_{\mathfrak p}\in\mathsf{FP}_n^{\le d}\text{-}\mathsf{Inj}(R_{\mathfrak p})$ for every $\mathfrak p\in\mathsf{Spec}(R)$.
\end{enumerate}
\end{rmk}

\begin{prop}
Let $R$ be a ring, $n\in\mathbb{N}^*$ and $d\in\mathbb{N}^*\cup\{\infty\}$.
For an $R$-module $M$, the following statements are equivalent:
\begin{enumerate}\renewcommand{\labelenumi}{(\alph{enumi})}
\item $M$ is $\mathsf{FP}_n^{\le d}$-projective.
\item $M$ is projective with respect to every short exact sequence
$A\rightarrowtail B\twoheadrightarrow C$ of $R$-modules with
$A\in\mathsf{FP}_n^{\le d}\text{-}\mathsf{Inj}(R)$.
\item $M$ is projective with respect to every short exact sequence
$A\rightarrowtail E(A) \overset{\pi}\twoheadrightarrow E(A)/A$, where $A \in \mathsf{FP}_n^{\le d}\text{-}\mathsf{Inj}(R)$ and $E(A)$ denotes the injective envelope of $A$.
\end{enumerate}
\end{prop}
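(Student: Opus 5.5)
The plan is to prove the cycle (a)$\Rightarrow$(b)$\Rightarrow$(c)$\Rightarrow$(a). The first two implications are formal; the content sits in (c)$\Rightarrow$(a).

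For (a)$\Rightarrow$(b), let $A\rightarrowtail B\twoheadrightarrow C$ be exact with $A\in\mathsf{FP}_n^{\le d}\text{-}\mathsf{Inj}(R)$. Applying $\Hom_R(M,-)$ gives the exact sequence
$$\Hom_R(M,B)\to\Hom_R(M,C)\to\Ext^1_R(M,A).$$
Since $M\in{}^{\perp_1}(\mathsf{FP}_n^{\le d}\text{-}\mathsf{Inj}(R))$ by the definition of $\mathsf{FP}_n^{\le d}$-projectivity in Remark \ref{rmk: complete}(b), we have $\Ext^1_R(M,A)=0$. Hence $\Hom_R(M,B)\to\Hom_R(M,C)$ is surjective, which is exactly projectivity of $M$ with respect to the sequence. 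The implication (b)$\Rightarrow$(c) holds because the sequences in (c) are special cases of those in (b).

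For (c)$\Rightarrow$(a), fix $A\in\mathsf{FP}_n^{\le d}\text{-}\mathsf{Inj}(R)$. We must show $\Ext^1_R(M,A)=0$. Take the sequence $A\rightarrowtail E(A)\overset{\pi}\twoheadrightarrow E(A)/A$. Since $E(A)$ is injective, $\Ext^1_R(M,E(A))=0$, so the long exact sequence gives
$$\Hom_R(M,E(A))\xrightarrow{\pi_*}\Hom_R(M,E(A)/A)\to\Ext^1_R(M,A)\to 0.$$
Hypothesis (c) says precisely that $\pi_*$ is surjective. Therefore $\Ext^1_R(M,A)=0$. As $A$ was arbitrary, $M\in{}^{\perp_1}(\mathsf{FP}_n^{\le d}\text{-}\mathsf{Inj}(R))=\mathsf{FP}_n^{\le d}\text{-}\mathsf{Proj}(R)$.

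There is no substantial obstacle here. The only point to keep in view is that (c) quantifies over exactly the class $\mathsf{FP}_n^{\le d}\text{-}\mathsf{Inj}(R)$, so every $A$ needed in the Ext-vanishing condition is covered. Injective envelopes always exist, and any injective module would serve equally well in place of $E(A)$.
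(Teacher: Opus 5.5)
Your proof is correct, and it is the standard argument the paper has in mind when it says only that ``the proof is straightforward.'' You get (a)$\Rightarrow$(b) from the long exact $\Ext$ sequence, (b)$\Rightarrow$(c) by specialization, and (c)$\Rightarrow$(a) by using $\Ext^1_R(M,E(A))=0$ to identify $\Ext^1_R(M,A)$ with the cokernel of $\pi_*$.
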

\begin{proof} The proof is straightforward.
\end{proof}

When $d\ge \gD(R)$, we recover \cite[Prop. 2.2]{Zhu17}.

\begin{lemma}
Let $R$ be a ring, $n\in\mathbb{N}^*$, and $d\in \mathbb{N}^* \cup \{\infty\}$. Consider a short exact sequence of $R$-modules $A\rightarrowtail B\twoheadrightarrow C$.Then the following hold:
\begin{enumerate}\renewcommand{\labelenumi}{(\alph{enumi})}
\item If $A$ is $\mathsf{FP}_n^{\le d-1}$-injective and $B$ is $\mathsf{FP}_{n+1}^{\le d}$-injective, then $C$ is $\mathsf{FP}_{n+1}^{\le d}$-injective.
\item If $B$ is $\mathsf{FP}_n^{\le d}$-projective and $C$ is $\mathsf{FP}_{n+1}^{\le d}$-projective, then $A$ is $\mathsf{FP}_n^{\le d}$-projective.
\end{enumerate}
\end{lemma}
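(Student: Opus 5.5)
Part (a) comes from the long exact Ext sequence and dimension shifting. Part (b) is then obtained from (a) by Ext-orthogonality.

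\textbf{Part (a).} Let $K\in\mathsf{FP}_{n+1}^{\le d}(R)$. Applying $\Hom_R(K,-)$ to $A\rightarrowtail B\twoheadrightarrow C$ gives the exact segment
$$\Ext_R^1(K,B)\to\Ext_R^1(K,C)\to\Ext_R^2(K,A).$$
The left term vanishes because $B$ is $\mathsf{FP}_{n+1}^{\le d}$-injective. So it suffices to show $\Ext_R^2(K,A)=0$, and we shift dimension. Choose a short exact sequence $K'\rightarrowtail F\twoheadrightarrow K$ with $F$ finitely generated free. Then:
\begin{itemize}
\item $K'$ is finitely $n$-presented, since $K$ is finitely $(n+1)$-presented;
\item $\pd_R(K')\le d-1$, since $\pd_R(K)\le d$ (if $K$ is projective, $K'$ is projective and there is nothing to check; the case $d=\infty$ is trivial).
\end{itemize}
Hence $K'\in\mathsf{FP}_n^{\le d-1}(R)$, and
$$\Ext_R^2(K,A)\cong\Ext_R^1(K',A)=0$$
because $A$ is $\mathsf{FP}_n^{\le d-1}$-injective. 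This gives $\Ext_R^1(K,C)=0$. The only point needing care is the bookkeeping that the syzygy drops both indices by one, which is exactly what the hypotheses are matched to.

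\textbf{Part (b).} Let $E$ be $\mathsf{FP}_n^{\le d}$-injective; we must show $\Ext_R^1(A,E)=0$. The natural move is to embed $E$ into an injective module, $E\rightarrowtail I\twoheadrightarrow I/E$. Applying $\Hom_R(-,E)$ to $A\rightarrowtail B\twoheadrightarrow C$ gives the exact segment
$$\Ext_R^1(B,E)\to\Ext_R^1(A,E)\to\Ext_R^2(C,E).$$
The left term vanishes since $B$ is $\mathsf{FP}_n^{\le d}$-projective. Moreover $\Ext_R^2(C,E)\cong\Ext_R^1(C,I/E)$ because $I$ is injective. So it suffices to show $I/E$ is $\mathsf{FP}_{n+1}^{\le d}$-injective; then $C$ being $\mathsf{FP}_{n+1}^{\le d}$-projective kills $\Ext_R^1(C,I/E)$.

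To get this, apply part (a) to $E\rightarrowtail I\twoheadrightarrow I/E$, using the monotonicity of the classes:
\begin{itemize}
\item $I$ is injective, hence $\mathsf{FP}_{n+1}^{\le d}$-injective;
\item $\mathsf{FP}_n^{\le d-1}(R)\subseteq\mathsf{FP}_n^{\le d}(R)$, so $E$ is $\mathsf{FP}_n^{\le d-1}$-injective.
\end{itemize}
Part (a) then gives that $I/E$ is $\mathsf{FP}_{n+1}^{\le d}$-injective, as required. Alternatively, one can verify directly, by the dimension shift of (a), that $\Ext_R^2(K,E)=0$ for every $K\in\mathsf{FP}_{n+1}^{\le d}(R)$, which again yields $I/E\in\mathsf{FP}_{n+1}^{\le d}\text{-}\mathsf{Inj}(R)$.

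This completes (b) and thus the proof. The main obstacle is only the index bookkeeping in (a) and the observation that $E$ satisfies the weaker hypothesis needed to invoke (a) in (b).
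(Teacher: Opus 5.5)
Your proof is correct and follows essentially the same argument as the paper. In (a) you dimension-shift through a syzygy $K'\in\mathsf{FP}_n^{\le d-1}(R)$ to kill $\Ext_R^2(K,A)$. In (b) you embed the $\mathsf{FP}_n^{\le d}$-injective module into an injective one and apply (a) to the cokernel, using $\mathsf{FP}_n^{\le d}\text{-}\mathsf{Inj}(R)\subseteq\mathsf{FP}_n^{\le d-1}\text{-}\mathsf{Inj}(R)$.
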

\begin{proof} ~\
\begin{itemize}
\item  For part \textbf{(a)}, let $M\in\mathsf{FP}_{n+1}^{\le d}$. There exists an exact sequence 
$K\rightarrowtail F\twoheadrightarrow M,$ where $F$ is finitely generated and free, and $K\in\mathsf{FP}_n^{\le d-1}(R)$. Since $A$ is $\mathsf{FP}_n^{\le d-1}$-injective, $\Ext^1_R(K,A)=0$, and hence $\Ext^2_R(M,A)=0$. Applying $\Hom_R(M,-)$ to the given short exact sequence gives $0=\Ext^1_R(M,B)\to\Ext^1_R(M,C)\to\Ext^2_R(M,A)=0$. Thus, $\Ext^1_R(M,C)=0$, and so $C$ is $\mathsf{FP}_{n+1}^{\le d}$-injective.
\item For part \textbf{(b)}, let $N\in \mathsf{FP}_n^{\le d}\text{-}\mathsf{Inj}(R)$. Applying $\Hom_R(-,N)$ to $A\rightarrowtail B\twoheadrightarrow C$, we have $0=\Ext^1_R(B,N)\to \Ext^1_R(A,N)\to \Ext^2_R(C,N)$. Let $N\rightarrowtail E \twoheadrightarrow E/N$ be an exact sequence with $E$ injective. Since $\mathsf{FP}_n^{\le d}\text{-}\mathsf{Inj}(R) \subseteq \mathsf{FP}_n^{\le d-1}\text{-}\mathsf{Inj}(R)$, it follows from part (a) that $E/N$ is $\mathsf{FP}_{n+1}^{\le d}$-injective, so
$\Ext^2_R(C,N)\cong\Ext^1_R(C,E/N)=0$. Thus, $\Ext^1_R(A,N)=0$, and hence $A$ is $\mathsf{FP}_n^{\le d}$-projective.
\end{itemize}
\end{proof}

\begin{prop}\label{prop: fpn-proj acotada dim}
Let $R$ be a ring, and let $n,d\in\mathbb{N}^*$. 
Then every $\mathsf{FP}_n^{\le d}$-projective $R$-module has projective 
dimension at most $d$.
\end{prop}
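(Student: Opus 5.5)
By Remark \ref{rmk: complete}(b), every $\mathsf{FP}_n^{\le d}$-projective module is a direct summand of a $\mathcal{C}$-filtered module, where $\mathcal{C}$ is a set of representatives of $\mathsf{FP}_n^{\le d}(R)$. Every module in $\mathcal{C}$ has projective dimension at most $d$. The class $\mathsf{P}^{\le d}(R)$ is closed under direct summands. So it suffices to show that $\mathsf{P}^{\le d}(R)$ is closed under transfinite extensions ($\mathcal{C}$-filtrations).

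For this I will use the Ext-characterization: $\pd_R(X)\le d$ if and only if $\Ext_R^{d+1}(X,N)=0$ for every $R$-module $N$. Dimension shifting rewrites this as $\Ext^1_R(X,\Omega^{-d}N)=0$, where $\Omega^{-d}N$ is a $d$-th cosyzygy of $N$ taken in an injective coresolution. Equivalently, letting $\mathcal{D}$ be the class of all $d$-th cosyzygies, we have
$$\mathsf{P}^{\le d}(R)={}^{\perp_1}\mathcal{D}.$$
By Eklof's Lemma (see \cite{GT12}), any class of the form ${}^{\perp_1}\mathcal{Y}$ is closed under transfinite extensions. Hence every $\mathcal{C}$-filtered module lies in ${}^{\perp_1}\mathcal{D}=\mathsf{P}^{\le d}(R)$, and so do its direct summands.

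I could also avoid Eklof's Lemma by a direct route. Every $d$-th cosyzygy is $\mathsf{FP}_n^{\le d}$-injective: for $K$ with $\pd_R(K)\le d$ we get $\Ext^1_R(K,\Omega^{-d}N)\cong\Ext^{d+1}_R(K,N)=0$. This is exactly the remark after \cite{GD09} in the paper. So $\mathcal{D}\subseteq \mathsf{FP}_n^{\le d}\text{-}\mathsf{Inj}(R)$. Taking left orthogonals reverses the inclusion, which gives
$$\mathsf{FP}_n^{\le d}\text{-}\mathsf{Proj}(R)={}^{\perp_1}\bigl(\mathsf{FP}_n^{\le d}\text{-}\mathsf{Inj}(R)\bigr)\subseteq{}^{\perp_1}\mathcal{D}=\mathsf{P}^{\le d}(R).$$
This second argument is cleaner, and I will present it as the proof.

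The only point needing care is the identification ${}^{\perp_1}\mathcal{D}=\mathsf{P}^{\le d}(R)$. It needs $\Ext^1_R(X,\Omega^{-d}N)\cong\Ext^{d+1}_R(X,N)$ for all $X$, which holds by dimension shifting along the injective coresolution of $N$ (with $d\ge1$). It also needs the standard criterion that $\Ext^{d+1}_R(X,-)=0$ implies $\pd_R(X)\le d$. I do not expect a real obstacle beyond stating these correctly.
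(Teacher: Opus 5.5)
Your proof is correct, and the argument you actually present takes a different, lighter route than the paper's.

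\textbf{The paper's route.} The paper argues through filtrations. By the Eklof--Trlifaj description \cite[Cor. 3.2.4]{GT12}, every $\mathsf{FP}_n^{\le d}$-projective module is a direct summand of a module filtered by modules in $\mathsf{FP}_n^{\le d}(R)$. Auslander's lemma \cite[Prop. 3]{A55} then says that $\mathsf{P}^{\le d}(R)$ is closed under transfinite extensions, and closure under direct summands finishes the proof. Your first sketch is essentially this argument, with Eklof's lemma applied to $\mathsf{P}^{\le d}(R)={}^{\perp_1}\mathcal{D}$ playing the role of Auslander's lemma.

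\textbf{Your route.} Your second argument avoids filtrations altogether. For $\pd_R(K)\le d$ you have $\Ext^1_R(K,\Omega^{-d}N)\cong\Ext^{d+1}_R(K,N)=0$, so every $d$-th cosyzygy lies in $\mathsf{FP}_n^{\le d}\text{-}\mathsf{Inj}(R)$. Hence any $\mathsf{FP}_n^{\le d}$-projective $M$ satisfies $\Ext^{d+1}_R(M,N)\cong\Ext^1_R(M,\Omega^{-d}N)=0$ for all $N$, which gives $\pd_R(M)\le d$.

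\textbf{Comparison.} Your argument uses only the definition of $\mathsf{FP}_n^{\le d}\text{-}\mathsf{Proj}(R)$ as a left $\Ext^1$-orthogonal. It needs neither the completeness of the cotorsion pair, nor that $\mathcal{C}$ is a set containing $R$, nor any deconstructibility. It also proves the more general fact that ${}^{\perp_1}(\mathcal{X}^{\perp_1})\subseteq\mathsf{P}^{\le d}(R)$ for every class $\mathcal{X}\subseteq\mathsf{P}^{\le d}(R)$. The paper's route relies on heavier results and yields the filtration picture as a by-product, but that extra information is not needed for the bound on projective dimension. Finally, you only use the inclusion ${}^{\perp_1}\mathcal{D}\subseteq\mathsf{P}^{\le d}(R)$, not the full equality.
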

\begin{proof}
The result follows from \cite[Prop. 3]{A55}, noting that every 
$\mathsf{FP}_n^{\le d}$-projective $R$-module $M$ admits a filtration 
by finitely $n$-presented $R$-modules of projective dimension at most $d$.
\end{proof}

When $d\ge \gD(R)$, the following result recovers the equivalence between
conditions (1) and (2) in \cite[Thm. 2.1]{ODL14}.

\begin{cor}\label{cor: rel fpn y fpn proj}
Let $R$ be a ring, $n\in\mathbb{N}^*$ and $d\in\mathbb{N}^*\cup\{\infty\}$. If $M$ is a finitely generated $R$-module, then $M\in\mathsf{FP}_n^{\le d}\text{-}\mathsf{Proj}(R)$ if and only if $M\in\mathsf{FP}_n^{\le d}(R)$.    
\end{cor}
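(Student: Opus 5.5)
We prove the two implications separately; all the work is in showing that a finitely generated $\mathsf{FP}_n^{\le d}$-projective module lies in $\mathsf{FP}_n^{\le d}(R)$.

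\emph{If $M\in\mathsf{FP}_n^{\le d}(R)$, then $M$ is $\mathsf{FP}_n^{\le d}$-projective.} This is immediate from the definitions. By definition $\Ext^1_R(M,E)=0$ for every $E\in\mathsf{FP}_n^{\le d}\text{-}\mathsf{Inj}(R)$. Hence $M\in{}^{\perp_1}(\mathsf{FP}_n^{\le d}\text{-}\mathsf{Inj}(R))=\mathsf{FP}_n^{\le d}\text{-}\mathsf{Proj}(R)$.

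\emph{If $M$ is finitely generated and $\mathsf{FP}_n^{\le d}$-projective, then $M\in\mathsf{FP}_n^{\le d}(R)$.} We use the structure of the projective class given in Remark \ref{rmk: complete}(b). Let $\mathcal{C}$ be the set of representatives of $\mathsf{FP}_n^{\le d}(R)$. By \cite[Cor. 3.2.4]{GT12}, $M$ is a direct summand of a $\mathcal{C}$-filtered module $X$. Write $X=M\oplus M'$, with inclusion $\iota\colon M\to X$ and projection $\pi\colon X\to M$, so that $\pi\iota=\id_M$.

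The key step is to replace $X$ by a submodule $N$ with $M\subseteq N\subseteq X$ such that $N$ has a \emph{finite} $\mathcal{C}$-filtration. For this we apply the Hill Lemma \cite[Thm. 7.10]{GT12} to the $\mathcal{C}$-filtration of $X$, taking $\kappa=\aleph_0$; this is allowed because every module in $\mathcal{C}$ is finitely presented, i.e.\ $<\aleph_0$-presented.
\begin{itemize}
\item The Hill Lemma yields a family $\mathcal{H}$ of submodules of $X$. Every finitely generated submodule of $X$ is contained in some $N\in\mathcal{H}$ that is filtered by finitely many members of $\mathcal{C}$.
\item A more elementary alternative is also available: since $M$ is finitely generated, its generators lie in the union of finitely many steps of the continuous filtration. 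However, a plain initial segment $X_\alpha$ need not be a finite extension when $\alpha\ge\omega$. This is exactly why the Hill Lemma is needed.
\end{itemize}

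Applying this to the finitely generated submodule $M$ gives $N$ with $M\subseteq N\subseteq X$ and a finite chain $0=N_0\subseteq N_1\subseteq\cdots\subseteq N_k=N$ whose factors lie in $\mathcal{C}$. Since $\mathsf{FP}_n^{\le d}(R)$ is closed under extensions (see the table of closure properties), induction on $k$ gives $N\in\mathsf{FP}_n^{\le d}(R)$.

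It remains to show that $M$ is a direct summand of $N$. The restriction $\pi|_N\colon N\to M$ satisfies $\pi|_N\circ\iota=\id_M$, so it splits the inclusion $M\hookrightarrow N$. Since $\mathsf{FP}_n^{\le d}(R)$ is closed under direct summands, $M\in\mathsf{FP}_n^{\le d}(R)$. For finite $d$, the bound $\pd_R(M)\le d$ could also be read off from Proposition \ref{prop: fpn-proj acotada dim}, but the summand argument already gives it, and it also covers $d=\infty$.

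The main obstacle is the Hill Lemma step: we must check that its hypotheses hold and extract that $N$ is a finite iterated extension of modules in $\mathcal{C}$, not merely $\aleph_0$-presented. If this becomes delicate, the fallback is the standard fact that a finitely generated (more generally, countably generated) direct summand of a $\mathcal{C}$-filtered module, with $\mathcal{C}$ consisting of finitely presented modules, is a summand of a finitely $\mathcal{C}$-filtered module. This is the same argument used to show that finitely generated modules in ${}^{\perp_1}(\mathcal{C}^{\perp_1})$ are summands of finite $\mathcal{C}$-extensions.
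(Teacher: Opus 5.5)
Your proof is correct, but it takes a different route from the paper's.

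\textbf{The paper's route.} The paper never looks inside a filtration. Since $\mathsf{FP}_n\text{-}\mathsf{Inj}(R)\subseteq\mathsf{FP}_n^{\le d}\text{-}\mathsf{Inj}(R)$, a finitely generated $\mathsf{FP}_n^{\le d}$-projective module is $\mathsf{FP}_n$-projective. It is therefore finitely $n$-presented by the known case \cite[Thm. 2.1]{ODL14}. For finite $d$, the bound $\pd_R(M)\le d$ is supplied separately by Proposition \ref{prop: fpn-proj acotada dim}, i.e.\ Auslander's lemma applied to filtrations.

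\textbf{Your route.} You get both properties at once. Using the Hill Lemma with $\kappa=\aleph_0$, you realize $M$ as a direct summand of a finitely $\mathcal{C}$-filtered submodule $N\subseteq X$. After that, only closure of $\mathsf{FP}_n^{\le d}(R)$ under extensions and direct summands is needed. This makes the argument self-contained, with no appeal to \cite{ODL14}. It is uniform in $d$, including $d=\infty$. It also works verbatim for any set of finitely presented modules that contains $R$ and is closed under extensions and direct summands. The paper's version is shorter only because it outsources the finiteness part.

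\textbf{The point you flagged.} It is easy to close.
\begin{enumerate}
\item Conditions (H3) and (H4), applied with the submodule $0\in\mathcal{H}$, give a $P\in\mathcal{H}$ containing $M$ that is finitely presented and $\mathcal{C}$-filtered, say by $(P_\alpha)_{\alpha\le\tau}$.
\item The least $\beta$ with $P_\beta=P$ cannot be a limit ordinal, by continuity of the filtration and finite generation of $P$. So either $P=0$ or $\beta=\gamma+1$.
\item In the latter case, $P_\gamma$ is finitely generated, being the kernel of $P\twoheadrightarrow P_{\gamma+1}/P_\gamma$, which is finitely presented. Now repeat the argument with $P_\gamma$.
\item The ordinals strictly decrease, so this stops after finitely many steps. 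The result is a finite chain from $0$ to $P$ with factors in $\mathcal{C}$, which is exactly what your induction on extensions needs.
\end{enumerate}
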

\begin{proof}
This follows from Proposition \ref{prop: fpn-proj acotada dim} and \cite[Thm. 2.1]{ODL14}.
\end{proof}


\subsection{$\mathsf{FP}_n^{\le d}$-injective and $\mathsf{FP}_n^{\le d}$-projective modules over $(n,d)$-coherent rings}  
We begin by characterizing left $(n,d)$-coherent rings in terms of $\mathsf{FP}_{n+1}^{\le d}$-injective modules.

\begin{theorem}
Let $R$ be a ring, $n\in\mathbb{N}^*$ and $d\in\mathbb{N}^*\cup\{\infty\}$. Then the following statements are equivalent:
\begin{enumerate}
\item[(1)] $R$ is left $(n,d)$-coherent.
\item[(15)] $\Ext_R^1(M,N)=0$ for all $M\in \mathsf{FP}_n^{\le d}(R)$ and all $N\in \mathsf{FP}_{n+1}^{\le d}\text{-}\mathsf{Inj}(R)$.
\end{enumerate}
\end{theorem}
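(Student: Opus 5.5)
The plan is to prove the two implications separately. The direction (1)$\Rightarrow$(15) follows from Proposition \ref{prop:equiv}. The converse (15)$\Rightarrow$(1) comes from a test-module argument using the complete cotorsion pair of Remark \ref{rmk: complete}(b), applied with $n+1$ in place of $n$.

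\textbf{(1)$\Rightarrow$(15).} Assume $R$ is left $(n,d)$-coherent. By Proposition \ref{prop:equiv}, $\mathsf{FP}_n^{\le d}(R)=\mathsf{FP}_{n+1}^{\le d}(R)$. Hence every $M\in\mathsf{FP}_n^{\le d}(R)$ lies in $\mathsf{FP}_{n+1}^{\le d}(R)$. By the definition of $\mathsf{FP}_{n+1}^{\le d}$-injectivity, $\Ext_R^1(M,N)=0$ for every $N\in\mathsf{FP}_{n+1}^{\le d}\text{-}\mathsf{Inj}(R)$.

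\textbf{(15)$\Rightarrow$(1).} Let $M\in\mathsf{FP}_n^{\le d}(R)$. Condition (15) says exactly that $M\in{}^{\perp_1}\bigl(\mathsf{FP}_{n+1}^{\le d}\text{-}\mathsf{Inj}(R)\bigr)$, that is, $M$ is $\mathsf{FP}_{n+1}^{\le d}$-projective in the sense of Remark \ref{rmk: complete}(b). Since $M$ is finitely generated, Corollary \ref{cor: rel fpn y fpn proj} applied with $n+1$ gives $M\in\mathsf{FP}_{n+1}^{\le d}(R)$. Thus $\mathsf{FP}_n^{\le d}(R)\subseteq\mathsf{FP}_{n+1}(R)$, which is the definition of left $(n,d)$-coherence.

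\textbf{Main obstacle.} The whole weight of (15)$\Rightarrow$(1) rests on Corollary \ref{cor: rel fpn y fpn proj}, which is borrowed from \cite[Thm. 2.1]{ODL14}. If that citation is unavailable in the required generality (for instance when $d=\infty$), I would argue directly as follows.
\begin{itemize}
\item By Remark \ref{rmk: complete}(b) and \cite[Cor. 3.2.4]{GT12}, $M$ is a direct summand of a module $X$ filtered by modules in $\mathsf{FP}_{n+1}^{\le d}(R)$.
\item Since $M$ is finitely generated, I would try to show that $M$ is a direct summand of a finite stage of this filtration, using the Hill lemma to pass to a small subfiltration. A finite stage is a finite extension of modules in $\mathsf{FP}_{n+1}^{\le d}(R)$, so it lies in that class, which is closed under extensions.
\item Closure of $\mathsf{FP}_{n+1}^{\le d}(R)$ under direct summands then gives $M\in\mathsf{FP}_{n+1}^{\le d}(R)$.
\end{itemize}
This reduction from ``finitely generated summand of a filtered module'' to ``summand of a finite extension'' is the delicate point. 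As an alternative route, one could write $M=F/K$ with $F$ finitely generated free and $K\in\mathsf{FP}_{n-1}^{\le d-1}(R)$. One would then test with $\mathsf{FP}_{n+1}^{\le d}$-injective modules built from direct limits, which is possible since this class is closed under direct limits when $n+1\ge 2$ by Remark \ref{rmk: complete}(b). The goal would be to show that $K$ is finitely $n$-presented, via the Ext–direct limit characterization (13).
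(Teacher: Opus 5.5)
Your proof is correct and follows the paper's route. For (15)$\Rightarrow$(1) the paper also treats $M\in\mathsf{FP}_n^{\le d}(R)$ as a finitely generated relatively projective module and applies \cite[Thm. 2.1]{ODL14}; the only difference is that it first uses $\mathsf{FP}_{n+1}\text{-}\mathsf{Inj}(R)\subseteq\mathsf{FP}_{n+1}^{\le d}\text{-}\mathsf{Inj}(R)$ to see that $M$ is $\mathsf{FP}_{n+1}$-projective, whereas you go through Corollary \ref{cor: rel fpn y fpn proj}, which is itself derived from that same citation. So the citation is only needed in its unbounded form, and your worry about $d=\infty$ and the Hill-lemma fallback are unnecessary, although the fallback does go through with $\kappa=\aleph_0$.
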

\begin{proof} ~\
\begin{itemize}
\item (15)$\Rightarrow$(1): Suppose that $\Ext_R^1(M,N)=0$ for all $M\in\mathsf{FP}_n^{\le d}(R)$ and all $N\in\mathsf{FP}_{n+1}^{\le d}\text{-}\mathsf{Inj}(R)$. Let $M\in\mathsf{FP}_n^{\le d}(R)$. Since 
$\mathsf{FP}_{n+1}\text{-}\mathsf{Inj}(R)\subseteq \mathsf{FP}_{n+1}^{\le d}\text{-}\mathsf{Inj}(R)$,
we also have $\Ext_R^1(M,N)=0$ for each $N\in \mathsf{FP}_{n+1}\text{-}\mathsf{Inj}(R)$.
Thus $M\in \mathsf{FP}_{n+1}\text{-}\mathsf{Proj}(R)$, and by \cite[Thm. 2.1]{ODL14} it follows that $M$ is finitely $(n+1)$-presented. Hence, $R$ is left $(n,d)$-coherent.
\item (1)$\Rightarrow$(15): Immediate.
\end{itemize}
\end{proof}

When $d\ge\gD(R)$, we recover \cite[Cor. 4.7]{BP19}.

\begin{rmk}\label{rmk: coh prom}
For $n\in\mathbb{N}^*$, by \cite[Prop. 3.3]{Zhou04}, a ring $R$ is left $n$-coherent
if and only if $\Ext_R^{n+1}(M,N)=0$ for every finitely $n$-presented $R$-module $M$
and every $\mathsf{FP}_1$-injective $R$-module $N$. Consequently, if the class of $\mathsf{FP}_1$-injective modules has injective dimension at most $n$, then $R$ is left $n$-coherent. In our context, if $\Ext_R^{n+1}(M,N)=0$ for all $M\in\mathsf{FP}_n^{\le d}(R)$ and all $N\in\mathsf{FP}_1^{\le d}\text{-}\mathsf{Inj}(R)$, then $R$ is left $(n,d)$-coherent. However, whether the converse holds remains an open question: does left $(n,d)$-coherence of $R$ imply $\Ext_R^{n+1}(M,N)=0$ for all such $M$ and $N$?
\end{rmk}

We now prove the following theorem by adapting the arguments in \cite[Thm. 1]{Lee02}. 

\begin{theorem}\label{thm:n-coh 1}
Let $R$ be a ring, $n\in\mathbb{N}^*$ and $d\in\mathbb{N}^* \cup \{\infty\}$. Then the following statements are equivalent:
\begin{enumerate}
\item[(1)] $R$ is left $(n,d)$-coherent.
\item[(16)] $\Ext_R^2(M,N)=0$ for every finitely $n$-presented $R$-module $M$ with $\pd_R(M)\le d$ and every
$\mathsf{FP}_n^{\le d}$-injective $R$-module $N$.
\end{enumerate}
\end{theorem}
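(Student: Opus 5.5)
The plan is to prove both implications by dimension shifting along a finite free presentation $K\rightarrowtail F\twoheadrightarrow M$ of a module $M\in\mathsf{FP}_n^{\le d}(R)$, where $F$ is finitely generated free and $K$ is finitely $(n-1)$-presented. Since $F$ is projective, applying $\Hom_R(-,N)$ gives
$$\Ext_R^2(M,N)\cong\Ext_R^1(K,N)\quad\text{for every $R$-module } N.$$
Moreover $\pd_R(K)\le d-1$, as in the proof of Proposition \ref{prop: car1}; for $d=\infty$ there is nothing to check. The whole argument reduces to comparing the vanishing of $\Ext^1_R(K,-)$ with the membership $K\in\mathsf{FP}_n(R)$.

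For (1)$\Rightarrow$(16), assume $R$ is left $(n,d)$-coherent and let $M\in\mathsf{FP}_n^{\le d}(R)$.
\begin{enumerate}
\item By Proposition \ref{prop:equiv}, $M$ is finitely $(n+1)$-presented, so the syzygy $K$ is finitely $n$-presented.
\item Since $\pd_R(K)\le d-1\le d$, we get $K\in\mathsf{FP}_n^{\le d}(R)$.
\item For every $\mathsf{FP}_n^{\le d}$-injective module $N$ this gives $\Ext^1_R(K,N)=0$, hence $\Ext_R^2(M,N)=0$ by the isomorphism above.
\end{enumerate}

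For (16)$\Rightarrow$(1), take $M\in\mathsf{FP}_n^{\le d}(R)$ and the same sequence $K\rightarrowtail F\twoheadrightarrow M$. It suffices to show that $K$ is finitely $n$-presented, because then $M$ is finitely $(n+1)$-presented.
\begin{enumerate}
\item Let $N$ be any $\mathsf{FP}_n$-injective module. We have $\mathsf{FP}_n\text{-}\mathsf{Inj}(R)\subseteq\mathsf{FP}_n^{\le d}\text{-}\mathsf{Inj}(R)$, so (16) applies to $N$.
\item Hence $\Ext^1_R(K,N)\cong\Ext_R^2(M,N)=0$.
\item Therefore $K$ is $\mathsf{FP}_n$-projective, that is, $\mathsf{FP}_n^{\le\infty}$-projective.
\item Since $K$ is finitely generated, Corollary \ref{cor: rel fpn y fpn proj} with $d=\infty$ (equivalently \cite[Thm. 2.1]{ODL14}) yields $K\in\mathsf{FP}_n(R)$.
\end{enumerate}
This mirrors the proof of the preceding theorem (condition (15)).

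The only delicate point is this last step. One cannot argue directly with $\mathsf{FP}_n^{\le d}$-projectivity of $K$, because Corollary \ref{cor: rel fpn y fpn proj} would then only return $K\in\mathsf{FP}_n^{\le d}(R)$ if we already knew $K\in\mathsf{FP}_n(R)$. This is why the plan passes to the larger class $\mathsf{FP}_n\text{-}\mathsf{Inj}(R)\subseteq\mathsf{FP}_n^{\le d}\text{-}\mathsf{Inj}(R)$ and uses the unrestricted ($d=\infty$) characterization of finitely generated $\mathsf{FP}_n$-projective modules.
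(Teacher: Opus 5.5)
Your proposal is correct and follows essentially the same route as the paper. Both arguments dimension-shift along $K\rightarrowtail F\twoheadrightarrow M$: for (1)$\Rightarrow$(16) they use $K\in\mathsf{FP}_n^{\le d-1}(R)$, and for the converse they restrict to $\mathsf{FP}_n$-injective test modules and invoke \cite[Thm. 2.1]{ODL14} to get $K\in\mathsf{FP}_n(R)$ (your bound $\pd_R(K)\le d-1$ is also the right one, where the paper's text slips to $K\in\mathsf{FP}_{n-1}^{\le d}(R)$).
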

\begin{proof} ~\
\begin{itemize}
\item (1)$\Rightarrow$(16): Assume that $R$ is left $(n,d)$-coherent.
Let $M\in\mathsf{FP}_n^{\le d}(R)$ and let $N$ be $\mathsf{FP}_n^{\le d}$-injective.
By Proposition \ref{prop: car1}, $M$ is finitely $(n+1)$-presented, so there exists
a short exact sequence $H\rightarrowtail F\twoheadrightarrow M $, with $F$ finitely generated free and $H \in \mathsf{FP}_n^{\le d-1}(R)$. Since $N$ is $\mathsf{FP}_n^{\le d}$-injective, $\Ext_R^1(H,N)=0$, and hence $\Ext_R^2(M,N)=0$. 
\item (16)$\Rightarrow$(1): Conversely, assume that $\Ext_R^2(M,N)=0$ for all such $M$ and $N$. Let $M$ be a finitely $n$-presented $R$-module with $\pd_R(M)\le d$, and consider a short exact sequence
$K\rightarrowtail F\twoheadrightarrow M$, where $F$ is a finitely generated free $R$-module and $K\in\mathsf{FP}_{n-1}^{\le d}(R)$. Applying $\Hom_R(-,N)$ yields the exact sequence $0=\Ext_R^1(F,N)\to \Ext_R^1(K,N)\to\Ext_R^2(M,N)=0$, which implies $\Ext_R^1(K,N)=0$ for all $\mathsf{FP}_{n}^{\le d}$-injective $N$, and in particular for all $\mathsf{FP}_n$-injective $N$.  By \cite[Thm. 2.1]{ODL14}, it follows that $K$ is finitely $n$-presented. Consequently, $M$ is finitely $(n+1)$-presented, and by Proposition \ref{prop: car1}, it follows that $R$ is left $(n,d)$-coherent.
\end{itemize}
\end{proof}

When $n=1$, we recover \cite[Thm. 1]{Lee02}.

\begin{cor}\label{cor:n-d-coh-equiv 1}
Let $R$ be a ring,  $n\in\mathbb{N}^*$ and $d\in\mathbb{N}^*\cup \{\infty\}$.
Then the following statements are equivalent:
\begin{enumerate}
\item $R$ is left $(n,d)$-coherent.
\item[(17)] For every short exact sequence $ K \rightarrowtail P \twoheadrightarrow C $
with $C\in \mathsf{FP}_n^{\le d}(R)$ and $P$ projective,
the module $K$ is $\mathsf{FP}_n^{\le d}$-projective.
\item[(18)] $\Ext_R^{j+1}(C,N)=0$ for all $j \ge 0$,
all $C \in \mathsf{FP}_n^{\le d}(R)$, and all
$N \in \mathsf{FP}_n^{\le d}\text{-}\mathsf{Inj}(R)$.
\item[(19)] The class $\mathsf{FP}_n^{\le d}\text{-}\mathsf{Inj}(R)$
is closed under cokernels of monomorphisms.
\item[(20)] For every $N \in \mathsf{FP}_n^{<d}\text{-}\mathsf{Inj}(R)$,
the quotient $E(N)/N$ is again
$\mathsf{FP}_n^{<d}\text{-}\mathsf{Inj}(R)$.
\item[(21)] The cotorsion pair
$\big(
\mathsf{FP}_n^{\le d}\text{-}\mathsf{Proj}(R),\,
\mathsf{FP}_n^{\le d}\text{-}\mathsf{Inj}(R)
\big)$ 
is hereditary.
\item[(22)]  $\Ext_R^{j+1}(P,N)=0$ for all $j \ge 1$,
all $P \in \mathsf{FP}_n^{<d}\text{-}\mathsf{Proj}(R)$, and all
$N \in \mathsf{FP}_n^{<d}\text{-}\mathsf{Inj}(R)$.
\item[(23)] The class $\mathsf{FP}_n^{\le d}\text{-}\mathsf{Proj}(R)$
is closed under kernels of epimorphisms.
\item[(24)] Every $\mathsf{FP}_{n+1}^{\le d}$-injective $R$-module is
$\mathsf{FP}_{n}^{\le d}$-injective. 
\item[(25)] Every $\mathsf{FP}_n^{\le d}$-projective $R$-module is
$\mathsf{FP}_{n+1}^{\le d}$-projective.
\end{enumerate}
\end{cor}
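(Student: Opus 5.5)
The plan is to route every condition through condition (1), using the two characterizations already proved: Theorem \ref{thm:n-coh 1}, i.e.\ (1)$\Leftrightarrow$(16), and the theorem characterizing (1) by (15). Throughout I will use the following consequence of Proposition \ref{prop:equiv}. If $R$ is left $(n,d)$-coherent and $C\in\mathsf{FP}_n^{\le d}(R)$, then $C\in\mathsf{FP}_\infty^{\le d}(R)$. Hence every syzygy $\Omega^jC$ computed from a resolution by finitely generated free modules lies in $\mathsf{FP}_\infty(R)$ and has projective dimension at most $d-j\le d$. So $\Omega^jC\in\mathsf{FP}_n^{\le d}(R)$ for all $j\ge0$.

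\textbf{Ext-vanishing conditions.} For (1)$\Rightarrow$(18), dimension shifting gives
$$\Ext_R^{j+1}(C,N)\cong\Ext^1_R(\Omega^jC,N)=0$$
for every $\mathsf{FP}_n^{\le d}$-injective $N$. Conversely, (18)$\Rightarrow$(16) is the case $j=1$, and (16)$\Rightarrow$(1) is Theorem \ref{thm:n-coh 1}.

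For (18)$\Rightarrow$(19), let $A\rightarrowtail B\twoheadrightarrow D$ be exact with $A,B$ $\mathsf{FP}_n^{\le d}$-injective, and let $M\in\mathsf{FP}_n^{\le d}(R)$. The long exact sequence gives $\Ext^1_R(M,B)\to\Ext^1_R(M,D)\to\Ext^2_R(M,A)$, whose outer terms vanish. For (19)$\Rightarrow$(18) with $j=1$, embed $N$ into an injective $E$. Then $E/N$ is $\mathsf{FP}_n^{\le d}$-injective, so $\Ext^2_R(C,N)\cong\Ext^1_R(C,E/N)=0$, and Theorem \ref{thm:n-coh 1} closes the loop.

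The cotorsion pair in (21) is complete by Remark \ref{rmk: complete}(b). For a complete cotorsion pair, being hereditary, closure of the right class under cokernels of monomorphisms, and closure of the left class under kernels of epimorphisms are equivalent by the standard Garc\'ia Rozas lemma. This yields (19)$\Leftrightarrow$(21)$\Leftrightarrow$(23). Condition (22) is the $\Ext^{\ge2}$-orthogonality that characterizes heredity; I will connect it through the same dimension-shifting argument used for (18).

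\textbf{Syzygies and the $n\to n+1$ comparisons.} For (1)$\Rightarrow$(17), let $K\rightarrowtail P\twoheadrightarrow C$ be exact and compare it with a presentation $K'\rightarrowtail F\twoheadrightarrow C$ with $F$ finitely generated free. By Schanuel's lemma, $K\oplus F\cong K'\oplus P$. Here $K'\in\mathsf{FP}_n^{\le d}(R)$, which is $\mathsf{FP}_n^{\le d}$-projective by Corollary \ref{cor: rel fpn y fpn proj}, and $P$ is projective. Closure of $\mathsf{FP}_n^{\le d}\text{-}\mathsf{Proj}(R)$ under direct sums and summands then gives that $K$ is $\mathsf{FP}_n^{\le d}$-projective. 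Conversely, (17) gives $\Ext^2_R(C,N)\cong\Ext^1_R(K,N)=0$, which is (16).

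For (24) and (25), first note that $\mathsf{FP}_{n+1}^{\le d}(R)\subseteq\mathsf{FP}_n^{\le d}(R)$. Hence $\mathsf{FP}_n^{\le d}\text{-}\mathsf{Inj}\subseteq\mathsf{FP}_{n+1}^{\le d}\text{-}\mathsf{Inj}$ always, and dually $\mathsf{FP}_{n+1}^{\le d}\text{-}\mathsf{Proj}\subseteq\mathsf{FP}_n^{\le d}\text{-}\mathsf{Proj}$. So (24) says the two injective classes are equal, and (25) says the two projective classes are equal; since the classes form cotorsion pairs, (24)$\Leftrightarrow$(25). Next, (24)$\Rightarrow$(15) is immediate, and (15)$\Rightarrow$(1) is the earlier theorem. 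Finally, (1)$\Rightarrow$(24) holds because $\mathsf{FP}_n^{\le d}(R)=\mathsf{FP}_{n+1}^{\le d}(R)$ by Proposition \ref{prop:equiv}.

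\textbf{Main obstacle.} The hard step is fitting (20) and (22), which are stated with the superscript ${<}d$, into the cycle. I would first read ${<}d$ as $\le d-1$. Under that reading, (1) implies left $(n,d-1)$-coherence by Remark \ref{rmk: 1}(a), so the arguments for (19) and (21) apply with $d-1$ in place of $d$. For (20)$\Rightarrow$(1), I would dimension shift with $E(N)/N$ as above to get $\Ext^2_R(C,N)=0$. The difficulty is that this vanishing is obtained only for $\mathsf{FP}_n^{\le d-1}$-injective $N$, whereas Theorem \ref{thm:n-coh 1} needs it for $\mathsf{FP}_n^{\le d}$-injective $N$. I expect to settle this by checking whether the intended reading is simply $\le d$. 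In that case (20) reduces to the $E(N)/N$ argument already used for (19)$\Rightarrow$(18), and (22) to the heredity argument of (21).
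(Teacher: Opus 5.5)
Your proof is correct once ${<}d$ is read as ${\le}d$. It reaches the equivalences by a more hands-on route than the paper.

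\textbf{Comparison with the paper.} The paper gets the whole block (1)$\Leftrightarrow$(17)$\Leftrightarrow\cdots\Leftrightarrow$(22) by citing \cite[Thm. 1]{Zhu2017} together with Theorem~\ref{thm:n-coh 1}. It then proves (22)$\Rightarrow$(23) by a long exact sequence. Both (23)$\Rightarrow$(1) and (25)$\Rightarrow$(1) come from Corollary~\ref{cor: rel fpn y fpn proj}: a finitely generated $\mathsf{FP}_n^{\le d}$-projective module lies in $\mathsf{FP}_n^{\le d}(R)$. This is applied to the syzygy of $M$ in the first case, and to $M$ itself, viewed in $\mathsf{FP}_{n+1}^{\le d}\text{-}\mathsf{Proj}(R)$, in the second.

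You instead use (16) as a hub and prove the block by hand:
\begin{itemize}
\item dimension shifting along syzygies, which stay in $\mathsf{FP}_n^{\le d}(R)$ by Proposition~\ref{prop:equiv};
\item Schanuel's lemma for (17);
\item the Garc\'ia Rozas lemma for (19)$\Leftrightarrow$(21)$\Leftrightarrow$(23), where completeness is not actually needed, since any cotorsion pair in $\lMod R$ will do;
\item for (24)$\Leftrightarrow$(25), the fact that both are cotorsion pairs with nested classes, followed by (24)$\Rightarrow$(15)$\Rightarrow$(1).
\end{itemize}
Your version avoids both the external citation and Corollary~\ref{cor: rel fpn y fpn proj}, and with it Proposition~\ref{prop: fpn-proj acotada dim}. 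The paper's version is shorter. Both ultimately rest on \cite[Thm. 2.1]{ODL14}, through Theorem~\ref{thm:n-coh 1} and the theorem giving (15).

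\textbf{On your ``main obstacle''.} The ${\le}d$ reading is forced, not just convenient. The paper's own step (22)$\Rightarrow$(23) applies (22) to $\mathsf{FP}_n^{\le d}$-projective and $\mathsf{FP}_n^{\le d}$-injective modules. Under either literal reading the corollary is false:
\begin{itemize}
\item Take $d=\infty$ with $\mathsf{FP}_n^{<\infty}$ as defined in Section~2, and the ring of Example~\ref{ex: 1}. There $\mathsf{FP}_1^{<\infty}(R)=\mathsf{proj}(R)$, so the corresponding injective class is all of $\lMod R$ and the projective class is $\mathsf{Proj}(R)$. Hence (20) and (22) hold trivially, yet $R$ is not left coherent, so (1) fails.
\item Take finite $d$ with ${<}d$ read as ${\le}d-1$, and the ring of Remark~\ref{rmk: 1}(d) with $d=n+1$. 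It is left $(n,n)$-coherent, as every ring is, because $\mathsf{FP}_n^{\le n}(R)\subseteq\mathsf{FP}_\infty(R)$. So your own arguments give (20) and (22) at level $\le n=\le d-1$, but the ring is not left $(n,n+1)$-coherent.
\end{itemize}
So the obstacle is not something to resolve inside the proof: the statement is a typo, and with ${\le}d$ your sketch for (20) via $E(N)/N$ and for (22) via heredity is complete.
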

\begin{proof} ~\
\begin{itemize}
\item (1)$\Leftrightarrow$(17)$\Leftrightarrow$(18)$\Leftrightarrow$(19)$\Leftrightarrow$(20)$\Leftrightarrow$(21)$\Leftrightarrow$(22): These equivalences follow from \cite[Thm.1]{Zhu2017} together with Theorem \ref{thm:n-coh 1}.
\item (22)$\Rightarrow$(23): Let $ A\rightarrowtail B\twoheadrightarrow C$ be an exact sequence of $R$-modules with $B,C\in \mathsf{FP}_n^{\le d}\text{-}\mathsf{Proj}(R)$. For every $\mathsf{FP}_n^{\le d}$-injective $R$-module $N$, applying $\Hom_R(-,N)$ yields the corresponding long exact sequence.  Since $B$ and $C$ are $\mathsf{FP}_n^{\le d}$-projective, we have $\Ext_R^1(B,N)=0$ and $\Ext_R^2(C,N)=0$, and hence $\Ext_R^1(A,N)=0$. Therefore, $A\in \mathsf{FP}_n^{\le d}\text{-}\mathsf{Proj}(R)$. 
\item (23)$\Rightarrow$(1): This follows from Proposition \ref{prop: car1} and Corollary \ref{cor: rel fpn y fpn proj}. 
\item (1)$\Rightarrow$(24)$\Rightarrow$(25): This is clear.
\item (25)$\Rightarrow$(1): Let $M\in \mathsf{FP}_{n}^{\le d}(R)$. Then $M$ belongs to $\mathsf{FP}_{n}^{\le d}\text{-}\mathsf{Proj}(R)$, and by assumption we have $M\in \mathsf{FP}_{n+1}^{\le d}\text{-}\mathsf{Proj}(R)$.
Hence, by Corollary \ref{cor: rel fpn y fpn proj},
$M\in \mathsf{FP}_{n+1}^{\le d}(R)$. Therefore, every finitely $n$-presented $R$-module of projective dimension
at most $d$ is finitely $(n+1)$-presented, and consequently, $R$ is left $(n,d)$-coherent.
\end{itemize}
\end{proof}

When $d\ge\gD(R)$, the above equivalences recover
the equivalence between conditions (1), (5), and (6)
in \cite[Thm. 4.1]{MD06}, as well as the equivalence between
conditions (1), (5), and (6) in \cite[Thm. 5.5]{BP}.

\begin{rmk} Let $R$ be a ring, $n\in\mathbb{N}^*$ and $d\in\mathbb{N}^*$. 
\begin{enumerate}\renewcommand{\labelenumi}{(\alph{enumi})}
\item Assume that $n\ge 2$. Recall that an $R$-module $T$ is called an \newterm{$d$-tilting module} if it has  projective dimension at most $d$, $\Ext_R^{i}(T,T^{(\lambda)})=0$ for every cardinal $\lambda$ and every $i>0$, and $R$ admits a finite coresolution  whose terms belong to $\Add (T)$, where $\Add (T)$ denotes the class of all direct  summands of direct sums of copies of $T$. The cotorsion pair generated by $T$ is called an \newterm{$d$-tilting cotorsion pair}. By \cite[Lem. 1.13]{PT}, a cotorsion pair 
$(\mathcal{A},\mathcal{B})$ is $d$-tilting if and only if $\mathcal{A}\subseteq \mathsf{P}^{\le d}(R)$, $\mathcal{B}$ is closed under  arbitrary direct sums, and $(\mathcal{A},\mathcal{B})$ is hereditary. Consequently, the cotorsion pair $(\mathsf{FP}_n^{\le d}\text{-}\mathsf{Proj}(R),\mathsf{FP}_n^{\le d}\text{-}\mathsf{Inj}(R))$ is $d$-tilting if and only if $R$ is a left $(n,d)$-coherent ring.
\item Following Bazzoni and Tarantino \cite[Prop. 3.2]{BT19} (to which we also 
refer for the corresponding notation), assume that $R$ is a left $(n,d)$-coherent 
ring. By Proposition \ref{prop: fpn-proj acotada dim}, for every 
$\mathsf{FP}_n^{\le d}$-projective $R$-module $M$ and every acyclic complex $Y$ 
with terms in $\mathsf{FP}_n^{\le d}\text{-}\mathsf{Inj}(R)$, the cycles 
$\mathsf{Z}^j(Y)$ belong to $M^{\perp}$. It follows that 
$Y\in \widehat{\mathsf{FP}_n^{\le d}\text{-}\mathsf{Inj}(R)}$,
and hence $\mathsf{ex}\,\mathsf{FP}_n^{\le d}\text{-}\mathsf{Inj}(R)=
\widehat{\mathsf{FP}_n^{\le d}\text{-}\mathsf{Inj}(R)}.$
In particular, in the abelian model structure associated with the cotorsion pair
$\big( \mathsf{FP}_n^{\le d}\text{-}\mathsf{Proj}(R),\mathsf{FP}_n^{\le d}\text{-}\mathsf{Inj}(R) \big)$; see \cite[Cor. 3.1]{BT19}, the class  $\mathsf{dw}\,\mathsf{FP}_n^{\le d}\text{-}\mathsf{Inj}(R)$ coincides with the 
class of fibrant objects. Moreover, if $n\ge 2$, then by item (a) 
and \cite[Prop. 3.3]{BT19} 
$$\mathsf{dw}\,\mathsf{FP}_n^{\le d}\text{-}\mathsf{Inj}(R)=
\mathsf{dg}\,\mathsf{FP}_n^{\le d}\text{-}\mathsf{Inj}(R).$$
Hence, there exists a model structure on $\mathsf{Ch}(R)$ in which the fibrant 
objects are precisely the complexes with components in the $d$-tilting class 
$\mathsf{FP}_n^{\le d}\text{-}\mathsf{Inj}(R)$ and the trivial objects are the 
acyclic complexes.
\end{enumerate}
\end{rmk}

Mao and Ding \cite[Def. 3.1]{MD07} introduced the notion of an 
\newterm{$(n,d)$-injective} module. Let $n\in\mathbb{N}^*$ and 
$d\in\mathbb{N}^*\cup\{\infty\}$. An $R$-module $M$ is said to be 
\newterm{$(n,d)$-injective} if $\Ext_R^n(K,M)=0$ for every finitely $n$-presented $R$-module $K$ with  $\pd_R(K)\le d$. The following proposition establishes the relationship between $(n,d)$-injective and $\mathsf{FP}_n^{\le d}$-injective modules. Moreover, when $d\ge\gD(R)$, the equivalence of conditions (1) and (2) in the proposition below is a consequence of \cite[Lem. 3.4(5)]{MD06}.

\begin{prop}
Let $R$ be a left $(n,d)$-coherent ring with $n\in\mathbb{N}^*$ and $d\in\mathbb{N}^*\cup\{\infty\}$. For an $R$-module $M$, the following statements are equivalent:
\begin{enumerate}\renewcommand{\labelenumi}{(\alph{enumi})}
\item $M$ is $(n,d)$-injective.
\item There exists an exact sequence $M\rightarrowtail E_0 \to E_1\to\cdots\twoheadrightarrow E_{n-1}$, where each $E_i$ is $\mathsf{FP}_n^{\le d}$-injective for all $0 \le i < n-1$.
\item $\Ext_R^{n}(Y,M)=0$ for every $\mathsf{FP}_n^{\le d}$-projective $Y$.
\end{enumerate}
\end{prop}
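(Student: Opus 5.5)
The plan is to shift dimension along an injective coresolution of $M$. This turns $\Ext^n_R(-,M)$ into $\Ext^1_R(-,C)$ for a suitable cosyzygy $C$. The hypothesis that $R$ is left $(n,d)$-coherent enters only through Corollary \ref{cor:n-d-coh-equiv 1}, condition (18): $\Ext^{j}_R(K,N)=0$ for all $j\ge 1$, all $K\in\mathsf{FP}_n^{\le d}(R)$ and all $N\in\mathsf{FP}_n^{\le d}\text{-}\mathsf{Inj}(R)$. I read (b) as an exact sequence $0\to M\to E_0\to\cdots\to E_{n-1}\to 0$ in which every $E_i$, including $E_{n-1}$, is $\mathsf{FP}_n^{\le d}$-injective.

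\textbf{Step 1: (a)$\Rightarrow$(b).} Fix an injective coresolution $M\rightarrowtail I^0\to I^1\to\cdots$ and let $C^{k}$ be the $k$-th cosyzygy, so that $C^{0}=M$. Put $E_i=I^i$ for $0\le i\le n-2$ and $E_{n-1}=C^{n-1}$. Injective dimension shifting gives
$$\Ext^1_R(K,C^{n-1})\cong\Ext^n_R(K,M)\quad\text{for every } R\text{-module } K.$$
If $M$ is $(n,d)$-injective, the right-hand side vanishes for all $K\in\mathsf{FP}_n^{\le d}(R)$. Hence $C^{n-1}$ is $\mathsf{FP}_n^{\le d}$-injective. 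Injective modules are trivially $\mathsf{FP}_n^{\le d}$-injective, so this sequence has the form required in (b).

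\textbf{Step 2: (b)$\Rightarrow$(a).} Split the sequence into short exact sequences $D^{i}\rightarrowtail E_i\twoheadrightarrow D^{i+1}$ with $D^0=M$ and $D^{n-1}=E_{n-1}$. Let $K\in\mathsf{FP}_n^{\le d}(R)$. By (18), $\Ext^j_R(K,E_i)=0$ for all $j\ge 1$. The long exact sequences then give $\Ext^{j}_R(K,D^{i+1})\cong\Ext^{j+1}_R(K,D^{i})$ for $j\ge1$. Iterating yields
$$\Ext^n_R(K,M)\cong\Ext^1_R(K,D^{n-1})=\Ext^1_R(K,E_{n-1})=0.$$
This is the main obstacle. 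Without $(n,d)$-coherence the higher Ext groups $\Ext^{j}_R(K,E_i)$ for $j\ge 2$ need not vanish, since the $E_i$ are merely $\mathsf{FP}_n^{\le d}$-injective. If one prefers to avoid citing (18), the fallback is Proposition \ref{prop:equiv}. It shows that the syzygies of $K$ are finitely $\infty$-presented of projective dimension at most $d$, hence lie again in $\mathsf{FP}_n^{\le d}(R)$. One then shifts on the first variable instead, using that $\Ext^1_R(-,E_i)$ kills them.

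\textbf{Step 3: (a)$\Leftrightarrow$(c).} For (c)$\Rightarrow$(a), note that $\mathsf{FP}_n^{\le d}(R)\subseteq \mathsf{FP}_n^{\le d}\text{-}\mathsf{Proj}(R)$ by definition of the cotorsion pair (see also Corollary \ref{cor: rel fpn y fpn proj}). For (a)$\Rightarrow$(c), Step 1 gives $\Ext^n_R(Y,M)\cong\Ext^1_R(Y,C^{n-1})$ for every $Y$, with $C^{n-1}\in\mathsf{FP}_n^{\le d}\text{-}\mathsf{Inj}(R)$. The right-hand side vanishes for every $\mathsf{FP}_n^{\le d}$-projective $Y$ by definition of ${}^{\perp_1}(\mathsf{FP}_n^{\le d}\text{-}\mathsf{Inj}(R))$. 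Hence $\Ext^n_R(Y,M)=0$.
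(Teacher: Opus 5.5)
Your proof is correct and follows the paper's argument: (a)$\Rightarrow$(b) takes the $(n-1)$-st cosyzygy of an injective coresolution and needs no coherence, and (b)$\Rightarrow$(a) is dimension shifting, using the vanishing of all higher $\Ext^{j}_R(K,E_i)$ that $(n,d)$-coherence supplies through condition (18). The one difference is how you reach (c): you link it directly to (a) via $\Ext^n_R(Y,M)\cong\Ext^1_R(Y,C^{n-1})$ and the cotorsion pair, which shows (a)$\Leftrightarrow$(c) holds over any ring, whereas the paper links (c) to (b) and invokes coherence at that step.
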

\begin{proof} ~\
\begin{itemize}
\item (b)$\Rightarrow$(a): Assume there exists an exact sequence as in (b). Since $R$ is left $(n,d)$-coherent, dimension shifting gives $\Ext_R^n(K,M)\cong\Ext_R^1(K,E_{n-1})$ for every finitely $n$-presented $R$-module $K$ with $\pd_R(K)\le d$. As $E_{n-1}$ is $\mathsf{FP}_n^{\le d}$-injective, $\Ext_R^1(K,E_{n-1})=0$; hence $M$ is $(n,d)$-injective.
\item (a)$\Rightarrow$(b): The implication follows by taking an injective coresolution of $M$ and applying dimension shifting; note that the assumption that $R$ is left $(n,d)$-coherent is not required in this case.
\item (b)$\Leftrightarrow$(c): Follows from dimension shifting, using the left $(n,d)$-coherence of $R$, and arguing as in the proof of $(a)\Leftrightarrow(b)$.
\end{itemize}
\end{proof}

The proof of the following result is analogous to that of \cite[Prop. 5.2]{PA25} ans is omitted. 

\begin{cor}
Let $R$ be a left $(n,d)$-coherent ring with $n\in\mathbb{N}^*$ and $d\in\mathbb{N}^*\cup\{\infty\}$. Then:
\begin{enumerate}\renewcommand{\labelenumi}{(\alph{enumi})}
\item $\mathsf{FP}_n^{\le d}\text{-}\mathsf{Inj}(R)\cap \left( \mathsf{FP}_n^{\le d}\text{-}\mathsf{Inj}(R)\right )^{\perp_{1}}=\mathsf{Inj}(R)$.
\item $\mathsf{FP}_n^{\le d}\text{-}\mathsf{Proj}(R)\cap {}^{{\perp}_1}\left(\mathsf{FP}_n^{\le d}\text{-}\mathsf{Proj}(R)\right )=\mathsf{Proj}(R)$.
\end{enumerate}    
\end{cor}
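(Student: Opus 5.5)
We prove (a) and (b) separately. Both rest on the fact that, since $R$ is left $(n,d)$-coherent, the cotorsion pair $\bigl(\mathsf{FP}_n^{\le d}\text{-}\mathsf{Proj}(R),\mathsf{FP}_n^{\le d}\text{-}\mathsf{Inj}(R)\bigr)$ is complete (Remark \ref{rmk: complete}(b)) and hereditary (Corollary \ref{cor:n-d-coh-equiv 1}, condition (21)).

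For (a), the inclusion $\mathsf{Inj}(R)\subseteq \mathsf{FP}_n^{\le d}\text{-}\mathsf{Inj}(R)\cap(\mathsf{FP}_n^{\le d}\text{-}\mathsf{Inj}(R))^{\perp_1}$ is clear, since injective modules are $\mathsf{FP}_n^{\le d}$-injective and satisfy $\Ext^1_R(-,E)=0$. Conversely, let $M$ lie in the intersection and take a short exact sequence $M\rightarrowtail E(M)\twoheadrightarrow E(M)/M$. By condition (19) of Corollary \ref{cor:n-d-coh-equiv 1}, the class $\mathsf{FP}_n^{\le d}\text{-}\mathsf{Inj}(R)$ is closed under cokernels of monomorphisms. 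Since $E(M)$ and $M$ both belong to it, so does $E(M)/M$. As $M\in(\mathsf{FP}_n^{\le d}\text{-}\mathsf{Inj}(R))^{\perp_1}$, we have $\Ext^1_R(E(M)/M,M)=0$. Hence the sequence splits, and $M$ is a direct summand of $E(M)$, hence injective. The coherence hypothesis enters only through (19).

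For (b), the argument is dual. Projective modules lie in both classes, because $\Ext^1_R(P,-)=0$. Conversely, let $M$ lie in the intersection and take a free presentation $K\rightarrowtail F\twoheadrightarrow M$ with $F$ projective. By condition (23) of Corollary \ref{cor:n-d-coh-equiv 1}, $\mathsf{FP}_n^{\le d}\text{-}\mathsf{Proj}(R)$ is closed under kernels of epimorphisms. Since $F$ and $M$ belong to it, so does $K$. Then $M\in{}^{\perp_1}(\mathsf{FP}_n^{\le d}\text{-}\mathsf{Proj}(R))$ gives $\Ext^1_R(M,K)=0$. So the sequence splits, and $M$ is a summand of $F$, hence projective.

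The only point needing care is the identification of the closure properties (19) and (23) with the $(n,d)$-coherence hypothesis. These are exactly the equivalences in Corollary \ref{cor:n-d-coh-equiv 1}, so I expect no real obstacle.
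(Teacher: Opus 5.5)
Your proof is correct and is essentially the argument the paper has in mind. The paper omits the proof as analogous to \cite[Prop. 5.2]{PA25}, and observes that (a) is the instance of \cite[Lem. 6.3]{G26} for a class closed under cokernels of monomorphisms; that is exactly your splitting argument via condition (19), with (b) its dual via condition (23) of Corollary \ref{cor:n-d-coh-equiv 1} (your mention of completeness of the cotorsion pair is harmless but never actually used).
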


We note that part (a) of the preceding corollary is a particular case of a
more general result: whenever a class $\mathcal{C}$ is closed under cokernels
of monomorphisms, the intersection $\mathcal{C}\cap\mathcal{C}^{\perp_1}$
coincides with the class of injective modules; see \cite[Lem. 6.3]{G26}.\\

In the commutative $n$-coherent regular case, the following result is known; see \cite[Prop. 5.14]{PA25}.

\begin{prop}\label{prop: pair projectivo}
Let $R$ be a commutative $(n,d)$-coherent ring with $n\in\mathbb{N}^*$ and $d\in\mathbb{N}^*$. Then 
$\mathsf{FP}_n^{\le d}\text{-}\mathsf{Inj}(R)\bigcap\big(\mathsf{FP}_n^{\le d}(R)\setminus\mathsf{proj}(R)\big) = \emptyset.$
\end{prop}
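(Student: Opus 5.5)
The plan is to show that any $M\in\mathsf{FP}_n^{\le d}(R)$ that is also $\mathsf{FP}_n^{\le d}$-injective must be projective. We do this by computing $\Ext_R^p(M,M)$ for $p=\pd_R(M)$ in two ways: it vanishes by the injectivity hypothesis, but it is nonzero when $p\ge 1$ by a support argument. Commutativity is used only in the support argument.

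\emph{Step 1 (finite resolution).} By Proposition~\ref{prop:equiv}, $(n,d)$-coherence gives $\mathsf{FP}_n^{\le d}(R)=\mathsf{FP}_\infty^{\le d}(R)$. So $M$ has a resolution by finitely generated projectives, and since $p:=\pd_R(M)\le d$ it can be truncated to
$$0\to P_p\xrightarrow{f_p}P_{p-1}\to\cdots\to P_0\twoheadrightarrow M$$
with every $P_i$ finitely generated projective. Suppose $p\ge1$ and aim for a contradiction.

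\emph{Step 2 (vanishing).} By the equivalence (1)$\Leftrightarrow$(18) of Corollary~\ref{cor:n-d-coh-equiv 1}, $\Ext_R^{j}(C,N)=0$ for all $j\ge1$, all $C\in\mathsf{FP}_n^{\le d}(R)$ and all $N\in\mathsf{FP}_n^{\le d}\text{-}\mathsf{Inj}(R)$. Taking $C=N=M$ and $j=p$ gives $\Ext_R^p(M,M)=0$.

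\emph{Step 3 (nonvanishing).} Write $(-)^*=\Hom_R(-,R)$. Because the resolution has length $p$ and consists of finitely generated projectives, for every module $N$,
$$\Ext_R^p(M,N)=\Coker\bigl(\Hom_R(P_{p-1},N)\to\Hom_R(P_p,N)\bigr)\cong\Coker\bigl(P_{p-1}^*\otimes_RN\to P_p^*\otimes_RN\bigr)\cong \Ext_R^p(M,R)\otimes_R N.$$
The last isomorphism is right exactness of $-\otimes_R N$.

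Next, $E:=\Ext_R^p(M,R)=\Coker(f_p^*)$ is finitely generated and nonzero. Indeed, if $f_p^*$ were surjective, then since finitely generated projectives are reflexive, $f_p$ would be a split monomorphism. Then $\Img(f_p)$ would be a direct summand of $P_{p-1}$, hence projective, and $\pd_R(M)\le p-1$, a contradiction.

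Commutativity now enters. If $\mathfrak p\notin\mathsf{Supp}(M)$, then $M_{\mathfrak p}=0$, so $E_{\mathfrak p}=\Ext^p_{R_{\mathfrak p}}(M_{\mathfrak p},R_{\mathfrak p})=0$, using that localization commutes with $\Ext$ for modules in $\mathsf{FP}_\infty$. Hence $\mathsf{Supp}(E)\subseteq\mathsf{Supp}(M)$. Choose $\mathfrak p\in\mathsf{Supp}(E)\ne\emptyset$. Both $E_{\mathfrak p}$ and $M_{\mathfrak p}$ are nonzero finitely generated $R_{\mathfrak p}$-modules, so by Nakayama
$$(E\otimes_R M)_{\mathfrak p}\otimes_{R_{\mathfrak p}}\kappa(\mathfrak p)\cong (E_{\mathfrak p}\otimes\kappa(\mathfrak p))\otimes_{\kappa(\mathfrak p)}(M_{\mathfrak p}\otimes\kappa(\mathfrak p))\neq0.$$
Therefore $\Ext_R^p(M,M)\cong E\otimes_RM\neq0$, contradicting Step 2. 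Hence $p=0$, so $M$ is finitely generated projective, i.e. $M\in\mathsf{proj}(R)$, and the intersection in the statement is empty.

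The main obstacle is Step 3, specifically the identification $\Ext_R^p(M,-)\cong\Ext_R^p(M,R)\otimes_R-$ together with the nonvanishing of $E\otimes_RM$. This is the only place commutativity is used, and it requires the resolution by finitely generated projectives supplied by $(n,d)$-coherence in Step 1. Step 2 is immediate from the earlier corollary.
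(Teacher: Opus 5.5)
Your proof is correct and follows the paper's route: $\Ext^p_R(M,M)=0$ from condition (18) of Corollary~\ref{cor:n-d-coh-equiv 1}, played against the nonvanishing of the top self-extension $\Ext^{\pd_R(M)}_R(M,M)$ over a commutative ring. The only difference is that the paper cites \cite[Lem. 1.2]{PT} for $\Ext^{s}_R(M,M)\neq 0$, whereas you prove it inline via $\Ext^p_R(M,-)\cong\Ext^p_R(M,R)\otimes_R-$ and Nakayama, using Proposition~\ref{prop:equiv} to get the finite resolution by finitely generated projectives that this argument needs. (One small wording fix: the split monomorphism $f_p$ shows that $\Coker(f_p)$ is projective, not $\Img(f_p)$, and that is what gives $\pd_R(M)\le p-1$.)
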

\begin{proof}
Suppose there exists a non-projective $R$-module $M\in\mathsf{FP}_n^{\le d}(R)$ such that $M \in\mathsf{FP}_n^{\le d}\text{-}\mathsf{Inj}(R)$. Then $\Ext_R^1(M,M)=0$, and consequently $\Ext_R^k(M,M)=0$ for all $k\ge 2$. Since $M$ is finitely $n$-presented and non-projective, we have $1\le\pd_R(M)=s\le d$. However, by \cite[Lem. 1.2]{PT}, this implies $\Ext_R^s(M,M)\neq 0$, a contradiction.
\end{proof}

The following proposition complements Theorem \ref{theo: mah}.

\begin{prop}\label{prop: von neumman}
Let $R$ be a left $(n,d)$-coherent ring with $n\in\mathbb{N}^*$ and $d\in\mathbb{N}^*\cup\{\infty\}$. The following statements are equivalent:
\begin{enumerate}\renewcommand{\labelenumi}{(\alph{enumi})}
\item $\mathsf{FP}_n^{\le d}\text{-}\mathsf{Proj}(R) \subseteq \mathsf{FP}_n^{\le d}\text{-}\mathsf{Inj}(R)$.
\item $\mathsf{FP}_n^{\le d}\text{-}\mathsf{Inj}(R)=\lMod R$.
\item $\Ext^1_R(X,Y)=0$ for all $X,Y\in \mathsf{FP}_n^{\le d}\text{-}\mathsf{Proj}(R)$.
\item $\mathsf{Proj}(R)=\mathsf{FP}_n^{\le d}\text{-}\mathsf{Proj}(R)$.
\item $\mathsf{proj}(R)=\mathsf{FP}_n^{\le d}(R)$.
\item The left annihilator of every finitely generated proper ideal of $R^{\op}$ is nonzero.
\end{enumerate}
\end{prop}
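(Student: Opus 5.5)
The plan is to organize the equivalences around (e), using Theorem \ref{theo: mah} to get (e)$\Leftrightarrow$(f). By that theorem, since $R$ is already assumed left $(n,d)$-coherent, $\mathsf{FP}_n^{\le d}(R)=\mathsf{proj}(R)$ holds exactly when the annihilator condition holds. (If $d=\infty$ the theorem as stated needs $d\in\mathbb{N}^*$; there I would argue via the convention $\mathsf{FP}_n^{\le\infty}(R)=\mathsf{FP}_n(R)$ and the cited result of Bass directly, but I expect the main case of interest to be finite $d$.)

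For the remaining conditions I would proceed as follows.
\begin{itemize}
\item (e)$\Rightarrow$(b): if every $K\in\mathsf{FP}_n^{\le d}(R)$ is projective, then $\Ext^1_R(K,-)=0$, so every module is $\mathsf{FP}_n^{\le d}$-injective.
\item (b)$\Rightarrow$(d): since $\mathsf{FP}_n^{\le d}$-projectives are ${}^{\perp_1}(\lMod R)=\mathsf{Proj}(R)$, we get (d).
\item (d)$\Rightarrow$(e): by Corollary \ref{cor: rel fpn y fpn proj}, every $M\in\mathsf{FP}_n^{\le d}(R)$ is $\mathsf{FP}_n^{\le d}$-projective. Hence it is projective and finitely generated, so $M\in\mathsf{proj}(R)$. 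The reverse inclusion $\mathsf{proj}(R)\subseteq\mathsf{FP}_n^{\le d}(R)$ is trivial.
\item (b)$\Rightarrow$(a) is trivial.
\item (a)$\Rightarrow$(c): for $X,Y$ both $\mathsf{FP}_n^{\le d}$-projective, (a) makes $Y$ also $\mathsf{FP}_n^{\le d}$-injective. Then $\Ext^1_R(X,Y)=0$ by the cotorsion pair.
\end{itemize}

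The step I expect to need the most care is (c)$\Rightarrow$(e) (or (c)$\Rightarrow$(d)). Take $M\in\mathsf{FP}_n^{\le d}(R)$. Left $(n,d)$-coherence and Proposition \ref{prop:equiv} give $M\in\mathsf{FP}_{n+1}(R)$, so there is a short exact sequence $K\rightarrowtail F\twoheadrightarrow M$ with $F$ finitely generated free and $K$ finitely $n$-presented with $\pd_R(K)\le d-1$. Hence $K\in\mathsf{FP}_n^{\le d}(R)$, and $K$ is $\mathsf{FP}_n^{\le d}$-projective by Corollary \ref{cor: rel fpn y fpn proj}. Condition (c) applied to $X=M$, $Y=K$ gives $\Ext^1_R(M,K)=0$. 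So the sequence splits and $M$ is a direct summand of $F$, hence $M\in\mathsf{proj}(R)$. This gives (e) and closes the cycle: (e)$\Rightarrow$(b)$\Rightarrow$(a)$\Rightarrow$(c)$\Rightarrow$(e), together with (b)$\Rightarrow$(d)$\Rightarrow$(e) and (e)$\Leftrightarrow$(f).

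The only real subtlety is making sure the syzygy $K$ lies in $\mathsf{FP}_n^{\le d}(R)$, which is exactly where $(n,d)$-coherence is used.
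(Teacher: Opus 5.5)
For $d\in\mathbb{N}^*$ your argument is correct, and it takes a different route from the paper. The paper proves (a)$\Leftrightarrow$(b)$\Leftrightarrow$(c)$\Leftrightarrow$(d) by applying the general criterion \cite[Cor.~4.6]{HMP21} to the cotorsion pair $(\mathsf{FP}_n^{\le d}\text{-}\mathsf{Proj}(R),\mathsf{FP}_n^{\le d}\text{-}\mathsf{Inj}(R))$. That pair is hereditary by Corollary~\ref{cor:n-d-coh-equiv 1}, and this is where coherence enters. The paper then adds (d)$\Rightarrow$(e)$\Rightarrow$(b), and gets (e)$\Leftrightarrow$(f) from Theorem~\ref{theo: mah}.

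You instead route everything through (e) and use coherence at exactly one point, (c)$\Rightarrow$(e). There it puts the first syzygy $K$ of $M$ into $\mathsf{FP}_n^{\le d}(R)$, so that $\Ext^1_R(M,K)=0$ splits $K\rightarrowtail F\twoheadrightarrow M$. All your other implications among (a)--(e) hold over any ring. Your version is self-contained and shows clearly where coherence is used. The paper's version is shorter but depends on the external result for hereditary cotorsion pairs. (You never need Corollary~\ref{cor: rel fpn y fpn proj}: the inclusion $\mathsf{FP}_n^{\le d}(R)\subseteq\mathsf{FP}_n^{\le d}\text{-}\mathsf{Proj}(R)$ is immediate from $\mathsf{FP}_n^{\le d}\text{-}\mathsf{Inj}(R)=(\mathsf{FP}_n^{\le d}(R))^{\perp_1}$.)

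The real problem is your hedge for $d=\infty$. The implication (f)$\Rightarrow$(e) cannot be proved ``via Bass directly'', because it is false. Take $R=\mathbb{Z}/4\mathbb{Z}$.
\begin{itemize}
\item $R$ is Noetherian, hence left $n$-coherent, i.e.\ left $(n,\infty)$-coherent.
\item Its finitely generated proper ideals are $0$ and $2R$, with annihilators $R$ and $2R$, so (f) holds.
\item $R/2R$ has the periodic resolution $\cdots\to R\xrightarrow{2}R\xrightarrow{2}R\twoheadrightarrow R/2R$. So $R/2R\in\mathsf{FP}_\infty(R)\subseteq\mathsf{FP}_n^{\le\infty}(R)$, yet it is not projective, and (e) (equivalently (b)) fails.
\end{itemize}
Bass's annihilator condition only forces modules with a \emph{finite} resolution by finitely generated projectives to be projective. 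That is exactly why the proof of Theorem~\ref{theo: mah} needs $d<\infty$. The condition says nothing about the modules of infinite projective dimension that $\mathsf{FP}_n^{\le\infty}(R)=\mathsf{FP}_n(R)$ contains.

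The paper's own proof has the same defect, since it invokes Theorem~\ref{theo: mah}, which is stated only for $d\in\mathbb{N}^*$. What is true for $d=\infty$ is the following:
\begin{itemize}
\item (a)--(e) are equivalent; your cycle works verbatim there.
\item They imply (f): the converse half of the proof of Theorem~\ref{theo: mah} still goes through, because the modules $Q/P$ used there lie in $\mathsf{FP}_\infty^{\le 1}(R)$.
\item (f) does not imply (e).
\end{itemize}
So you should restrict the equivalence with (f) to $d\in\mathbb{N}^*$, rather than promise an argument for $d=\infty$.
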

\begin{proof} ~\
\begin{itemize}
\item (a)$\Leftrightarrow$(b)$\Leftrightarrow$(c)$\Leftrightarrow$(d): These equivalences follow from combining \cite[Cor. 4.6]{HMP21} with Corollary \ref{cor:n-d-coh-equiv 1}. 
\item (d)$\Rightarrow$(e) $\Rightarrow$(b): Clear.
\item (e)$\Leftrightarrow$(f): This equivalence follows from Theorem \ref{theo: mah}.
\end{itemize}
\end{proof}

\begin{rmk}
Observe that if $n\in\mathbb{N}^*$ and $d\ge\gD(R)$, 
Proposition \ref{prop: von neumman} provides a characterization of 
classical left \newterm{$n$-von Neumann regular rings}, namely, those rings for which 
every finitely $n$-presented $R$-module is flat (equivalently, projective); 
see \cite[Thm. 3.9]{Zhu} and \cite[Prop. 3.2 and Cor. 5.4]{gp2}.
\end{rmk}

A ring $R$ is called left \newterm{self $\mathsf{FP}_n^{\le d}$-injective} if $R$ is $\mathsf{FP}_n^{\le d}$-injective as a module over itself.

\begin{prop}\label{prop: coh + self}
Let $R$ be a left $(n,d)$-coherent and left self $\mathsf{FP}_n^{\le d}$-injective ring with $n\in\mathbb{N}^*$ and $d\in\mathbb{N}^*\cup\{\infty\}$. Then the following hold:
\begin{enumerate}\renewcommand{\labelenumi}{(\alph{enumi})}
    \item If $d=\infty$, then 
    $\mathsf{FP}_n^{\le d}(R)=\mathsf{proj}(R)\cup\{M \in \mathsf{FP}_n(R) \mid \pd_R(M)=\infty\}.$
    \item If $d<\infty$, then $\mathsf{FP}_n^{\le d}(R)= \mathsf{proj}(R).$
\end{enumerate}
\end{prop}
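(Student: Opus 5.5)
Let $M\in\mathsf{FP}_n^{\le d}(R)$ with $\pd_R(M)=s<\infty$. The plan is to show that $s=0$, which gives (b) and also describes the finite projective dimension part of $\mathsf{FP}_n^{\le\infty}(R)=\mathsf{FP}_n(R)$ in (a). The inclusion $\mathsf{proj}(R)\subseteq\mathsf{FP}_n^{\le d}(R)$ is trivial. In (a) the modules of infinite projective dimension in $\mathsf{FP}_n(R)$ lie on both sides by definition, so (a) reduces to the same claim.

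\emph{Step 1: $\Ext^k_R(M,F)=0$ for all $k\ge1$ and every finitely generated free $F$.} Since $R$ is left self $\mathsf{FP}_n^{\le d}$-injective, $R$ lies in $\mathsf{FP}_n^{\le d}\text{-}\mathsf{Inj}(R)$. The class $\mathsf{FP}_n^{\le d}\text{-}\mathsf{Inj}(R)$ is closed under direct products and direct summands (Remark~\ref{rmk: complete}(b)), so every finitely generated free module $R^m$ is $\mathsf{FP}_n^{\le d}$-injective. Because $R$ is left $(n,d)$-coherent, condition (18) of Corollary~\ref{cor:n-d-coh-equiv 1} gives $\Ext^{j+1}_R(M,R^m)=0$ for all $j\ge0$.

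\emph{Step 2: a finitely generated projective resolution.} By Proposition~\ref{prop:equiv}, $M\in\mathsf{FP}_\infty^{\le d}(R)$. Since $\pd_R(M)=s<\infty$, we may truncate and obtain a resolution
\[
0\to P_s\to P_{s-1}\to\cdots\to P_0\to M\to 0
\]
with all $P_i$ finitely generated projective. In the truncated resolution, the last syzygy $P_s$ is projective because $\pd_R(M)=s$, and it is finitely generated because $M\in\mathsf{FP}_\infty(R)$, via Schanuel-type arguments.

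\emph{Step 3: $s=0$.} Suppose $s\ge1$. Choose a finitely generated projective $Q$ with $P_s\oplus Q$ free. By Step~1, $\Ext^s_R(M,P_s)=0$, since it is a direct summand of $\Ext^s_R(M,P_s\oplus Q)=0$. On the other hand, the standard fact used in Proposition~\ref{prop: pair projectivo} (\cite[Lem.~1.2]{PT}) gives the following. If $\pd_R(M)=s$ and the $s$-th syzygy is $P_s$, then $\Ext^s_R(M,P_s)\neq0$. Otherwise, the inclusion $P_s\rightarrowtail P_{s-1}$ would split, since $\Ext^s_R(M,P_s)\cong\Ext^1_R(\Omega^{s-1}M,P_s)$. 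Then $\Omega^{s-1}M$ would be projective and $\pd_R(M)\le s-1$, which is a contradiction. This argument needs no commutativity, so it can be checked directly. Hence $s=0$ and $M$ is projective and finitely generated, i.e.\ $M\in\mathsf{proj}(R)$.

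The main point requiring care is Step~1: it uses the full strength of the hereditary characterization (18) to pass from $\Ext^1$-vanishing against $R$ to vanishing in all degrees up to $s$. The splitting argument in Step~3 must also be applied to the correct syzygy $\Omega^{s-1}M$. With these, (b) follows directly, and in (a) every module of finite projective dimension in $\mathsf{FP}_n(R)$ is in $\mathsf{proj}(R)$, giving the stated decomposition.
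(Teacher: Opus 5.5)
Your proof is correct, but it reaches the contradiction through a different witness than the paper does.

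\textbf{The paper's route.} For a non-projective $M$ with $k=\pd_R(M)\ge 1$, the paper takes an arbitrary $N$ with $\Ext^k_R(M,N)\neq 0$ and a projective presentation $L\rightarrowtail P\twoheadrightarrow N$. Since $\Ext^{k+1}_R(M,L)=0$, there is a surjection $\Ext^k_R(M,P)\twoheadrightarrow\Ext^k_R(M,N)$, and condition (18) of Corollary \ref{cor:n-d-coh-equiv 1} kills the source. This never needs a finitely generated resolution of $M$. It does need every projective module, possibly infinitely generated, to be $\mathsf{FP}_n^{\le d}$-injective, i.e. closure of $\mathsf{FP}_n^{\le d}\text{-}\mathsf{Inj}(R)$ under arbitrary direct sums. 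Remark \ref{rmk: complete}(b) records this only for $n\ge 2$. For $n=1$ it still holds, because the class is closed under pure submodules and $\bigoplus E_i$ is pure in $\prod E_i$, but the paper does not say so.

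\textbf{Your route.} You use $(n,d)$-coherence up front, via Proposition \ref{prop:equiv}, to get a finite resolution by finitely generated projectives. You then take the specific witness $N=P_s$ and prove its nonvanishing by the splitting argument. This only requires finitely generated projectives to be $\mathsf{FP}_n^{\le d}$-injective, which follows from closure under finite products and direct summands. So your version is self-contained for every $n\ge 1$, at the cost of Step 2.

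\textbf{Step 1 is more than you need.} Since $\Omega^{s-1}M$ has the resolution $P_s\rightarrowtail P_{s-1}$ by finitely generated projectives, it lies in $\mathsf{FP}_\infty^{\le 1}(R)\subseteq\mathsf{FP}_n^{\le d}(R)$. Hence $\Ext^1_R(\Omega^{s-1}M,P_s)=0$ follows directly from self $\mathsf{FP}_n^{\le d}$-injectivity, without the hereditary characterization (18). Coherence is then used only to produce the finitely generated resolution.
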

\begin{proof}
It suffices to show that every finitely $n$-presented $R$-module of finite projective dimension at most $d\in \mathbb{N}^*$ is projective. Let $M\in\mathsf{FP}_n^{\le d}(R)$ be a non-projective module, and assume that $\pd_R(M)=k<\infty$. Since $M$ is not projective, we have $k\ge 1$. Hence, there exists an $R$-module $N$ such that $\Ext_R^k(M,N)\ne 0$. Consider a short exact sequence $L\rightarrowtail P\twoheadrightarrow N $, where $P$ is projective. Consider $\Ext_R^{k}(M,P) \to \Ext_R^{k}(M,N) \to \Ext_R^{k+1}(M,L)=0$. Since $R$ is self $\mathsf{FP}_n^{\le d}$-injective, every projective module is $\mathsf{FP}_n^{\le d}$-injective. By Corollary \ref{cor:n-d-coh-equiv 1}, it follows that $\Ext_R^{k}(M,P)=0$. Consequently, $\Ext_R^{k}(M,N)=0$, a contradiction. Therefore $M$ must be projective. 
\end{proof}

\begin{cor}
Let $R$ be a left $(n,d)$-coherent ring with $n\in\mathbb{N}^*$ and
$d\in\mathbb{N}^*$. Then $R$ is left self $\mathsf{FP}_n^{\le d}$-injective
if and only if the left annihilator of every finitely generated proper ideal of $R^{\op}$ is nonzero.
\end{cor}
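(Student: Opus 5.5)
The plan is to prove the two implications separately. The reverse direction uses Theorem \ref{theo: mah}; the forward direction uses Proposition \ref{prop: coh + self}(b) together with Theorem \ref{theo: mah}.

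For $(\Leftarrow)$, assume the annihilator condition. Since $R$ is left $(n,d)$-coherent by hypothesis, Theorem \ref{theo: mah} gives $\mathsf{FP}_n^{\le d}(R)=\mathsf{proj}(R)$. Now $R$ is $\mathsf{FP}_n^{\le d}$-injective as a module over itself, because $\Ext^1_R(P,R)=0$ for every finitely generated projective $P$. So $R$ is left self $\mathsf{FP}_n^{\le d}$-injective. (Alternatively, Proposition \ref{prop: von neumman}(f)$\Rightarrow$(b) gives $\mathsf{FP}_n^{\le d}\text{-}\mathsf{Inj}(R)=\lMod R$, which in particular contains $R$.)

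For $(\Rightarrow)$, assume $R$ is left self $\mathsf{FP}_n^{\le d}$-injective. Since $d\in\mathbb{N}^*$ is finite, Proposition \ref{prop: coh + self}(b) applies and yields $\mathsf{FP}_n^{\le d}(R)=\mathsf{proj}(R)$. Theorem \ref{theo: mah} then gives the annihilator condition. Equivalently, one can invoke Proposition \ref{prop: von neumman}(e)$\Rightarrow$(f).

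The only step with real content is the forward one, and it is already packaged in Proposition \ref{prop: coh + self}. That proposition relies on Corollary \ref{cor:n-d-coh-equiv 1}(18) to get $\Ext^k_R(M,P)=0$ for $k\ge1$ whenever $P$ is projective and hence $\mathsf{FP}_n^{\le d}$-injective. The single point to check is that $\mathsf{FP}_n^{\le d}$-injectivity passes from $R$ to arbitrary projectives. This holds for free modules of any rank: for $M\in\mathsf{FP}_n^{\le d}(R)$, $M$ is finitely $(n+1)$-presented by coherence, so $\Ext^1_R(M,-)$ commutes with direct sums (this is Proposition \ref{prop:equiv}(8) combined with condition (13)). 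It then passes to projectives by taking direct summands.
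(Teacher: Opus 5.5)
Your proof is correct and follows the paper's route. The forward direction is Proposition \ref{prop: coh + self}(b) followed by Theorem \ref{theo: mah} (equivalently (e)$\Rightarrow$(f) of Proposition \ref{prop: von neumman}), and the reverse direction reduces to $\mathsf{FP}_n^{\le d}(R)=\mathsf{proj}(R)$, i.e.\ Proposition \ref{prop: von neumman}; your extra check that self $\mathsf{FP}_n^{\le d}$-injectivity passes to all projectives is also valid: by coherence $\mathsf{FP}_n^{\le d}(R)\subseteq\mathsf{FP}_2(R)$, so $\Ext^1_R(M,-)$ commutes with direct sums, and this covers $n=1$, a step the paper only asserts in the proof of Proposition \ref{prop: coh + self}.
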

\begin{proof}
If $R$ is left self $\mathsf{FP}_n^{\le d}$-injective, the result follows
from Propositions \ref{prop: von neumman} and  \ref{prop: coh + self}. Conversely, assume that left annihilator of every finitely generated proper ideal of $R^{\op}$ is nonzero. By Proposition \ref{prop: von neumman},
$\mathsf{FP}_n^{\le d}\text{-}\mathsf{Inj}(R)=\lMod R$, and hence $R$ is left self $\mathsf{FP}_n^{\le d}$-injective.
\end{proof}

Recall that a cotorsion pair $(\mathcal{A},\mathcal{B})$ in $\lMod R$ is called \newterm{projective} if it is hereditary, complete, and $\mathcal{A}\cap\mathcal{B}=\mathsf{Proj}(R)$. Equivalently,  $(\mathcal{A},\mathcal{B})$ is projective if and only if it is complete, $\mathcal{B}$ contains all projective modules, and $\mathcal{B}$ satisfies the 2-out-of-3 property in $\lMod R$. In this setting, we obtain the following characterization. Note that when $d\ge\gD(R)$, this recovers \cite[Prop. 7.10]{PA25}.

\begin{prop} Let $R$ be a ring, $n\in\mathbb{N}^*$ and $d\in\mathbb{N}^*\cup\{\infty\}$. The following statements are equivalent:
\begin{enumerate}\renewcommand{\labelenumi}{(\alph{enumi})}
\item The cotorsion pair $\left(\mathsf{FP}_n^{\le d}\text{-}\mathsf{Proj}(R), \mathsf{FP}_n^{\le d}\text{-}\mathsf{Inj}(R) \right)$ is projective.
\item Every projective $R$-module is $\mathsf{FP}_n^{\le d}$-injective, and the class $\mathsf{FP}_n^{\le d}\text{-}\mathsf{Inj}(R)$ satisfies the 2-out-of-3 property in $\lMod R$.
\item The ring $R$ is left self $\mathsf{FP}_n^{\le d}$-injective and left $(n,d)$-coherent, and the class $\mathsf{FP}_n^{\le d}\text{-}\mathsf{Inj}(R)$ is closed under kernels of epimorphisms. 
\end{enumerate}
\end{prop}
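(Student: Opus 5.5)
The plan is to prove (a)$\Rightarrow$(b)$\Rightarrow$(c)$\Rightarrow$(a), relying on the characterization of projective cotorsion pairs recalled just before the statement and on Corollary \ref{cor:n-d-coh-equiv 1}.

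\textbf{(a)$\Rightarrow$(b).} Use the equivalent description of projective cotorsion pairs: such a pair is complete, its right class contains all projective modules, and that right class satisfies the 2-out-of-3 property. Applied to $\bigl(\mathsf{FP}_n^{\le d}\text{-}\mathsf{Proj}(R),\mathsf{FP}_n^{\le d}\text{-}\mathsf{Inj}(R)\bigr)$, this gives (b) directly.

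\textbf{(b)$\Rightarrow$(c).} Three things must be checked.
\begin{itemize}
\item Self $\mathsf{FP}_n^{\le d}$-injectivity: $R$ is projective, so it is $\mathsf{FP}_n^{\le d}$-injective by hypothesis.
\item Closure under kernels of epimorphisms: this is one instance of the 2-out-of-3 property.
\item $(n,d)$-coherence: by 2-out-of-3, $\mathsf{FP}_n^{\le d}\text{-}\mathsf{Inj}(R)$ is closed under cokernels of monomorphisms. This is condition (19) of Corollary \ref{cor:n-d-coh-equiv 1}, so $R$ is left $(n,d)$-coherent.
\end{itemize}

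\textbf{(c)$\Rightarrow$(a).} By Remark \ref{rmk: complete}(b), the pair is complete. Since $R$ is left $(n,d)$-coherent, condition (21) of Corollary \ref{cor:n-d-coh-equiv 1} shows the pair is hereditary. It remains to show $\mathsf{FP}_n^{\le d}\text{-}\mathsf{Inj}(R)$ contains all projectives and satisfies 2-out-of-3.

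\emph{Projectives.} Let $M\in\mathsf{FP}_n^{\le d}(R)$. Since $M$ is finitely generated, $\Ext^1_R(M,-)$ commutes with direct sums; this follows from $M\in\mathsf{FP}_{n+1}(R)$ by $(n,d)$-coherence, together with $n\ge1$ and condition (12) with $i=1$. Hence $R^{(I)}$ is $\mathsf{FP}_n^{\le d}$-injective. The class is closed under direct summands, so every projective module lies in it.

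\emph{2-out-of-3.} The class is closed under extensions (it is a right class of a cotorsion pair), under cokernels of monomorphisms (by (19)), and under kernels of epimorphisms (by hypothesis). So it satisfies 2-out-of-3, and by the recalled equivalence the pair is projective.

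The main obstacle is showing that arbitrary projective modules are $\mathsf{FP}_n^{\le d}$-injective when only $R$ itself is assumed to be. The argument above handles this through finite presentability and Ext commuting with direct sums. If that step fails, I would instead use Proposition \ref{prop: coh + self}: for $d<\infty$ it gives $\mathsf{FP}_n^{\le d}(R)=\mathsf{proj}(R)$, which makes every module $\mathsf{FP}_n^{\le d}$-injective. For $d=\infty$, the same proposition shows every module in $\mathsf{FP}_n^{\le d}(R)$ is either projective or of infinite projective dimension, and I would handle that case separately.
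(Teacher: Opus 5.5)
Your proof is correct and follows essentially the same route as the paper. The implications (a)$\Leftrightarrow$(b)$\Rightarrow$(c) come from the recalled characterization of projective cotorsion pairs together with condition (19) of Corollary \ref{cor:n-d-coh-equiv 1}. For (c)$\Rightarrow$(a), you put the projectives into $\mathsf{FP}_n^{\le d}\text{-}\mathsf{Inj}(R)$ starting from $R$, and you get the 2-out-of-3 property from closure under extensions, (19), and the kernel hypothesis.

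Your argument that every projective is $\mathsf{FP}_n^{\le d}$-injective uses that $\Ext^1_R(M,-)$ commutes with direct sums. This holds because $M$ is finitely presented, not merely finitely generated. That argument is more careful than the paper's appeal to Remark \ref{rmk: complete}, which only asserts closure under coproducts for $n\ge 2$. So your fallback via Proposition \ref{prop: coh + self} is not needed.
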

\begin{proof} ~\
\begin{itemize}
\item (a)$\Leftrightarrow$(b)$\Rightarrow$(c): These implications follow immediately from the definition of projective cotorsion pairs, Corollary \ref{cor:n-d-coh-equiv 1}, and Remark \ref{rmk: complete}.
\item (c)$\Rightarrow$(a): Since $R$ is left self-$\mathsf{FP}_n^{\le d}$-injective, Remark \ref{rmk: complete} implies that every projective $R$-module is $\mathsf{FP}_n^{\le d}$-injective. On the other hand, left $(n,d)$-coherence of $R$ yields that $\mathsf{FP}_n^{\le d}\text{-}\mathsf{Inj}(R)$ is closed under cokernels of epimorphisms (again by Corollary \ref{cor:n-d-coh-equiv 1}). As this class is always closed under extensions, it satisfies the 2-out-of-3 property in $\lMod R$.
\end{itemize}
\end{proof}


\section{$\mathsf{FP}_n^{\le d}$-flat and $\mathsf{FP}_n^{\le d}$-cotorsion modules}\label{s:planos y cotorsion relativos} 
We begin with the following.

\begin{dfn}
Let $R$ be a ring, $n\in\mathbb{N}^*$ and $d\in\mathbb{N}^*\cup\{\infty\}$. An $R^{\op}$-module $F$ is said to be \newterm{$\mathsf{FP}_n^{\le d}$-flat} if $\Tor_1^R(F,K)=0$ for every $K\in\mathsf{FP}_n^{\le d}(R)$.
\end{dfn}

\begin{rmk} Let $R$ be a ring. 
\begin{enumerate}\renewcommand{\labelenumi}{(\alph{enumi})}
\item An $R^{\op}$-module $M$ is $\mathsf{FP}_n^{\le d}$-flat if and only if for every finitely generated
projective $R$-module $P$ and every finitely $(n-1)$-presented submodule
$K\subseteq P$ with $\pd_R(K)\le d-1$, the canonical map $M\otimes_R K\to M\otimes_R P$ is injective. When $d\ge\gD(R)$, this recovers \cite[Thm. 2.13]{Zhu}.
\item Denote by $\mathsf{FP}_n^{\le d}\text{-}\mathsf{Flat}(R^{\op})$ the class of $\mathsf{FP}_n^{\le d}$-flat $R^{\op}$-modules. When $n=1$, this class coincides with the class of $d$-flat modules introduced by Lee \cite{Lee02}. For each $n\in\mathbb{N}^*$, we have the inclusion $\mathsf{FP}_n\text{-}\mathsf{Flat}(R^{\op}) 
\subseteq \mathsf{FP}_n^{\le d}\text{-}\mathsf{Flat}(R^{\op})$, where $\mathsf{FP}_n\text{-}\mathsf{Flat}(R^{\op})$ denotes the class of all $R^{\op}$-modules $F$ such that $\Tor_R^1(F,K)=0$ for every  $K\in\mathsf{FP}_n(R)$. This class was introduced by Zhou in \cite{Zhou04}. By standard properties of $\Tor$, the class $\mathsf{FP}_n^{\le d}\text{-}\mathsf{Flat}(R^{\op})$ is closed under extensions, direct summands and direct limits. If $n\ge 2$, it is also closed under direct products.
\item By \cite[Lem. 2.1]{PPT}, the class
$\varinjlim \mathsf{FP}_n^{\leq d}(R)$ is closed under arbitrary direct
sums, direct limits, pure submodules, pure epimorphic images, and pure
extensions. Moreover, if $n\ge 2$, $n=\infty$, or $d\leq n$, then
$\varinjlim \mathsf{FP}_n^{\leq d}(R)=\bigl[\mathsf{FP}_n^{\leq d}\text{-}\mathsf{Flat}(R^{\op})\bigr]^{\top_1}$.
\end{enumerate}
\end{rmk}

By the \newterm{character module} of an $R$-module (respectively, an $R^{\op}$-module) $M$, we mean the $R^{\op}$-module (respectively, the $R$-module) $M^{\flat}=\Hom_{\mathbb{Z}}(M,\mathbb{Q}/\mathbb{Z})$. Recall that an $R$-module $M$ is flat if and only if its character module $M^{\flat}$ is injective. In our context, we obtain the following characterization.

\begin{prop}\label{prop: carct 2}
Let $R$ be a ring, $n\in\mathbb{N}^*$ and $d\in\mathbb{N}^*\cup\{\infty\}$, and let $M$ be an 
$R^{\op}$-module. Then $M$ is $\mathsf{FP}_n^{\le d}$-flat if and only if its character module $M^{\flat}$ is 
$\mathsf{FP}_n^{\le d}$-injective.
\end{prop}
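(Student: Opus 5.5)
The plan is to use the standard Ext–Tor duality for character modules. For any $R^{\op}$-module $M$, any $R$-module $K$, and every $i\ge 0$, there is a natural isomorphism
$$\Ext_R^i(K,M^{\flat})\cong \Tor_i^R(M,K)^{\flat}.$$
It comes from the adjunction isomorphism $\Hom_R(K,\Hom_{\mathbb{Z}}(M,\mathbb{Q}/\mathbb{Z}))\cong \Hom_{\mathbb{Z}}(M\otimes_R K,\mathbb{Q}/\mathbb{Z})$, natural in $K$. We apply it to a projective resolution $P_\bullet\twoheadrightarrow K$. Since $\mathbb{Q}/\mathbb{Z}$ is an injective cogenerator of abelian groups, the functor $(-)^{\flat}$ is exact. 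It therefore commutes with taking homology, which identifies $H^i(\Hom_R(P_\bullet,M^{\flat}))$ with $H_i(M\otimes_R P_\bullet)^{\flat}$. No hypothesis on $K$ is needed here, so in particular no finite presentation of $K$ is required.

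With this isomorphism the proof takes two steps. First, $M$ is $\mathsf{FP}_n^{\le d}$-flat exactly when $\Tor_1^R(M,K)=0$ for every $K\in\mathsf{FP}_n^{\le d}(R)$. Second, because $\mathbb{Q}/\mathbb{Z}$ is a cogenerator, an abelian group $A$ vanishes if and only if $A^{\flat}=0$. Taking $i=1$ and letting $K$ run over $\mathsf{FP}_n^{\le d}(R)$, we get that $\Tor_1^R(M,K)=0$ for all such $K$ if and only if $\Ext_R^1(K,M^{\flat})=0$ for all such $K$. The latter is precisely the definition of $M^{\flat}$ being $\mathsf{FP}_n^{\le d}$-injective. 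The case $d=\infty$ is covered verbatim, since the argument never uses a bound on $d$.

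The only point that needs care is the naturality of the adjunction isomorphism in the resolution variable. This is what allows us to pass from the degree-zero isomorphism to the derived functors. It is standard; see for instance the usual proof that $M$ is flat if and only if $M^{\flat}$ is injective, which is exactly the special case where $K$ ranges over all cyclic modules. I expect no genuine obstacle: unlike the direction $\Tor_i^R(M^{\flat},K)\cong\Ext_R^i(K,M)^{\flat}$, which needs $K$ finitely presented (here $\mathsf{FP}_{n}$ with $i\le n-1$), the direction used here holds for arbitrary modules.
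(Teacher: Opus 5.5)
Your proposal is correct and takes essentially the paper's approach: the paper also derives the equivalence directly from the natural isomorphism $\Ext_R^1(K,M^{\flat})\cong\Tor_1^R(M,K)^{\flat}$ for $K\in\mathsf{FP}_n^{\le d}(R)$, together with the fact that $\mathbb{Q}/\mathbb{Z}$ is an injective cogenerator. You are also right that this direction needs no finiteness hypothesis on $K$, unlike the reverse duality used in Proposition \ref{prop: carct 1}, which is why that result requires $n\ge 2$.
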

\begin{proof}
This follows from the standard isomorphism $\Ext_R^1(K,M^{\flat})\cong\Tor_1^R(M,K)^{\flat}$
for every $K\in \mathsf{FP}_n^{\le d}(R)$.
\end{proof}

When $d\ge\gD(R)$, this recovers part (1) of \cite[Thm. 2.15]{Zhu}. For $n\ge 2$, the following 
result allows us to recover part (2) of that theorem.

\begin{prop}\label{prop: carct 1}
Let $R$ be a ring, $n\ge 2$ and $d\in\mathbb{N}^*\cup\{\infty\}$, and let $M$ be an $R$-module. 
Then $M$ is $\mathsf{FP}_n^{\le d}$-injective if and only if  its character module $M^{\flat}$ is $\mathsf{FP}_n^{\le d}$-flat.
\end{prop}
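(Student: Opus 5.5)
The plan is to reduce the statement to a natural isomorphism between $\Tor_1^R(M^{\flat},K)$ and $\Ext_R^1(K,M)^{\flat}$, valid for every $K\in\mathsf{FP}_n^{\le d}(R)$ once $n\ge 2$. Granting it, the equivalence is immediate because $\mathbb{Q}/\mathbb{Z}$ is an injective cogenerator of abelian groups, so an abelian group $A$ vanishes if and only if $A^{\flat}=0$. Hence $\Tor_1^R(M^{\flat},K)=0$ for all such $K$ exactly when $\Ext_R^1(K,M)=0$ for all such $K$, which is the claim.

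To produce the isomorphism, I will fix $K\in\mathsf{FP}_n^{\le d}(R)$ and use $n\ge 2$ to choose a finite $2$-presentation
$$F_2\to F_1\to F_0\twoheadrightarrow K$$
by finitely generated free modules. For a finitely generated free (or projective) $R$-module $F$ there is the natural isomorphism
$$M^{\flat}\otimes_R F\cong \Hom_R(F,M)^{\flat},$$
sending $f\otimes x$ to the functional $g\mapsto f(g(x))$. It is an isomorphism for $F=R$ and is additive in $F$, so it holds for finitely generated free modules. I will apply both functors to the truncated complex $F_2\to F_1\to F_0$. The homology of $M^{\flat}\otimes_R F_\bullet$ in degree $1$ computes $\Tor_1^R(M^{\flat},K)$. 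On the other side, $\Hom_R(F_\bullet,M)$ has cohomology $\Ext_R^1(K,M)$ in degree $1$, and only $F_0,F_1,F_2$ are needed for this. Since $(-)^{\flat}$ is exact, it commutes with taking (co)homology, and naturality of the isomorphism yields
$$\Tor_1^R(M^{\flat},K)\cong\Ext_R^1(K,M)^{\flat}.$$

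The only delicate point is confirming that the hypothesis $n\ge 2$ is exactly what is needed: degree-$1$ homology requires finitely generated free terms through $F_2$. This is also why the companion statement, Proposition \ref{prop: carct 2}, needs no restriction while this direction does. The constraint $\pd_R(K)\le d$ plays no role beyond restricting which $K$ are tested. Finally, I will double-check naturality, namely that the isomorphism commutes with the differentials induced by the maps $F_{i+1}\to F_i$. This is a routine verification from the explicit formula.
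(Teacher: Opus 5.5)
Your proposal is correct and follows essentially the same route as the paper: reduce to the natural isomorphism $\Tor_1^R(M^{\flat},K)\cong\Ext_R^1(K,M)^{\flat}$ for finitely $2$-presented $K$, then use that $\mathbb{Q}/\mathbb{Z}$ is an injective cogenerator. The only difference is that the paper cites this isomorphism from \cite[Lem.~2.7]{CD96}, whereas you prove it directly from a finite $2$-presentation, the isomorphism $M^{\flat}\otimes_R F\cong\Hom_R(F,M)^{\flat}$ for finitely generated free $F$, and exactness of $(-)^{\flat}$.
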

\begin{proof}
Let $K\in\mathsf{FP}_n^{\leq d}(R)$. Since $n\ge 2$, the module $K$ is in particular 
finitely $2$-presented, and hence \cite[Lem. 2.7]{CD96} yields a natural 
isomorphism $\Tor_1^R(M^{\flat},K)\cong\Ext_R^1(K,M)^{\flat}$. 
\end{proof}

Recall that an $R$-module $Q$ is called \newterm{pure-injective} if the functor $\Hom_R(-,Q)$ is exact on pure exact sequences; equivalently, every pure monomorphism $Q\rightarrowtail M$ splits.

\begin{cor}
Let $R$ be a ring, $n\ge 2$, and $d\in\mathbb{N}^* \cup\{\infty\}$. An $R^{\op}$-module $Q$ is $\mathsf{FP}_n^{\le d}$-flat and pure-injective if and only if $Q$ is a direct summand of $M^{\flat}$ for some $\mathsf{FP}_n^{\le d}$-injective $R$-module $M$.
\end{cor}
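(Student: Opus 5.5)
The plan is to combine Propositions \ref{prop: carct 2} and \ref{prop: carct 1} with two standard facts about character modules. First, for any $R$-module $M$ the character module $M^{\flat}$ is a pure-injective $R^{\op}$-module. Second, for any $R^{\op}$-module $Q$ the canonical evaluation map $Q\to Q^{\flat\flat}$ is a pure monomorphism. Both directions then reduce to checking closure properties of the class $\mathsf{FP}_n^{\le d}\text{-}\mathsf{Flat}(R^{\op})$ and the pure-injectivity of double duals.

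\emph{($\Leftarrow$).} Suppose $Q$ is a direct summand of $M^{\flat}$ with $M$ an $\mathsf{FP}_n^{\le d}$-injective $R$-module.

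Since $n\ge 2$, Proposition \ref{prop: carct 1} shows that $M^{\flat}$ is $\mathsf{FP}_n^{\le d}$-flat. The class $\mathsf{FP}_n^{\le d}\text{-}\mathsf{Flat}(R^{\op})$ is closed under direct summands, because $\Tor_1^R(-,K)$ is additive. Hence $Q$ is $\mathsf{FP}_n^{\le d}$-flat.

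Moreover, $M^{\flat}$ is pure-injective, and pure-injectivity passes to direct summands. So $Q$ is pure-injective.

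\emph{($\Rightarrow$).} Suppose $Q$ is $\mathsf{FP}_n^{\le d}$-flat and pure-injective.

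Set $M:=Q^{\flat}$, which is a left $R$-module. By Proposition \ref{prop: carct 2}, valid for every $n$, the flatness of $Q$ gives that $M$ is $\mathsf{FP}_n^{\le d}$-injective.

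The canonical map $\iota\colon Q\to M^{\flat}=Q^{\flat\flat}$ is a pure monomorphism. Since $Q$ is pure-injective, $\iota$ splits, so $Q$ is a direct summand of $M^{\flat}$.

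The only step needing care is the purity of $\iota$. I would quote it as a standard fact: for a right module $Q$ and a left module $X$, the map $\iota\otimes X$ is split by the dual of the evaluation map $X\otimes Q^{\flat}\to\dots$, or equivalently $(\iota)^{\flat}$ is a split epimorphism. Note that the hypothesis $n\ge 2$ is used only in the ($\Leftarrow$) direction, through Proposition \ref{prop: carct 1}.
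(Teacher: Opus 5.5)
Your proposal is correct and follows the paper's proof essentially step for step. The forward direction uses that $Q^{\flat}$ is $\mathsf{FP}_n^{\le d}$-injective (Proposition~\ref{prop: carct 2}) and splits the pure monomorphism $Q\rightarrowtail Q^{\flat\flat}$; the converse uses Proposition~\ref{prop: carct 1}, closure under direct summands, and pure-injectivity of character modules. You are right that $n\ge 2$ is needed only for the converse; the paper cites both propositions in the forward direction, which is more than necessary. Your justification of purity via $(\iota_Q)^{\flat}\circ\iota_{Q^{\flat}}=\mathrm{id}_{Q^{\flat}}$ is the right one, but drop the garbled first phrasing claiming that $\iota\otimes X$ is split.
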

\begin{proof}
Assume that $Q$ is $\mathsf{FP}_n^{\le d}$-flat and pure-injective. By Propositions \ref{prop: carct 1} and \ref{prop: carct 2}, the module $Q^{\flat\flat}$ is $\mathsf{FP}_n^{\le d}$-flat. The canonical monomorphism $Q\rightarrowtail Q^{\flat\flat}$ is pure. Since $Q$ is pure-injective, it splits. Hence, $Q$ is a direct summand of $Q^{\flat\flat}=(Q^{\flat})^{\flat}$. Conversely, suppose that $Q$ is a direct summand of $M^{\flat}$ for some $\mathsf{FP}_n^{\le d}$-injective $R$-module $M$, say $M^{\flat}=Q\oplus L$. By Proposition \ref{prop: carct 1}, $M^{\flat}$ is $\mathsf{FP}_n^{\le d}$-flat. Since this class is closed under direct summands, $Q$ is $\mathsf{FP}_n^{\le d}$-flat. Finally, $Q$ is pure-injective by \cite[Ch. I, Ex. 42(iii), p. 48]{St74}.
\end{proof}

Recall that a pair of classes $\mathcal{M}\subseteq\lMod{R^{\op}}$ and $\mathcal{C}\subseteq \lMod R$ form a \newterm{duality pair} $(\mathcal{M},\mathcal{C})$ if the following conditions hold:
\begin{itemize}
\item $M\in\mathcal{M}$ if and only if $M^{\flat}\in\mathcal{C}$;
\item $\mathcal{C}$ is closed under direct summands and finite direct sums.
\end{itemize}

\begin{prop}\label{prop: pureza}
Let $R$ be a ring, $n\ge 2$, and $d\in\mathbb{N}^*\cup\{\infty\}$. Then:
\begin{enumerate}\renewcommand{\labelenumi}{(\alph{enumi})}
\item The pair 
$\bigl(\mathsf{FP}_n^{\le d}\text{-}\mathsf{Flat}(R^{\op}),\mathsf{FP}_n^{\le d}\text{-}\mathsf{Inj}(R) \bigr)$
is a duality pair. Moreover, the class 
$\mathsf{FP}_n^{\le d}\text{-}\mathsf{Flat}(R^{\op})$ 
is closed under pure submodules, pure quotients, and pure extensions, and it is preenveloping and covering.
\item The pair $\bigl( \mathsf{FP}_n^{\le d}\text{-}\mathsf{Inj}(R), 
\mathsf{FP}_n^{\le d}\text{-}\mathsf{Flat}(R^{\op}) \bigr)$
is a duality pair. Moreover, the class 
$\mathsf{FP}_n^{\le d}\text{-}\mathsf{Inj}(R)$ 
is closed under pure submodules, pure quotients, and pure extensions, and it is preenveloping and covering.
\end{enumerate}
\end{prop}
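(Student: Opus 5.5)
The plan is to deduce both parts from Propositions \ref{prop: carct 2} and \ref{prop: carct 1} together with the general theory of duality pairs of Holm and J{\o}rgensen. That theory says: if $(\mathcal{M},\mathcal{C})$ is a duality pair, then $\mathcal{M}$ is closed under pure submodules, pure quotients and pure extensions. If moreover $\mathcal{M}$ is closed under arbitrary direct sums, then $\mathcal{M}$ is covering. If instead $\mathcal{M}$ is closed under direct products, then it is preenveloping.

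\textbf{Part (a).} First I verify that $\bigl(\mathsf{FP}_n^{\le d}\text{-}\mathsf{Flat}(R^{\op}),\mathsf{FP}_n^{\le d}\text{-}\mathsf{Inj}(R)\bigr)$ is a duality pair. The first condition, $M$ flat in this sense if and only if $M^{\flat}$ is $\mathsf{FP}_n^{\le d}$-injective, is exactly Proposition \ref{prop: carct 2}. The second condition holds because $\mathsf{FP}_n^{\le d}\text{-}\mathsf{Inj}(R)$ is closed under direct summands and finite direct sums; this follows from additivity of $\Ext^1_R(K,-)$ (see also Remark \ref{rmk: complete}(b)).

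The Holm--J{\o}rgensen theorem then gives closure of the flat class under pure submodules, pure quotients and pure extensions. For the approximation properties I need closure under products and sums. The flat class is closed under direct limits, hence under direct sums, because $\Tor$ commutes with direct limits; this gives covering. For products I use $n\ge 2$: each $K\in\mathsf{FP}_n^{\le d}(R)$ is finitely $2$-presented, so $\Tor_1^R(\prod F_i,K)\cong\prod\Tor_1^R(F_i,K)$, as already noted in the remark following the definition. Thus the class is closed under products, hence preenveloping.

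\textbf{Part (b).} Here the duality pair condition ($M$ injective in this sense if and only if $M^{\flat}$ is flat in this sense) is Proposition \ref{prop: carct 1}, which uses $n\ge2$. The other condition is closure of $\mathsf{FP}_n^{\le d}\text{-}\mathsf{Flat}(R^{\op})$ under summands and finite sums, which again follows from additivity of $\Tor$.

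By Remark \ref{rmk: complete}(b), $\mathsf{FP}_n^{\le d}\text{-}\mathsf{Inj}(R)$ is closed under direct products. Since $n\ge 2$, it is also closed under direct limits, in particular under direct sums. Applying Holm--J{\o}rgensen yields the purity closures, covering (from sums), and preenveloping (from products).

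\textbf{Main obstacle.} The delicate point is making sure the duality-pair theorem applies in both directions, since it is usually stated for pairs with $\mathcal{M}\subseteq\lMod R$ and $\mathcal{C}\subseteq\lMod{R^{\op}}$. This is only a matter of switching sides, replacing $R$ by $R^{\op}$. The real input is that both character-module equivalences hold precisely when $n\ge2$, which is exactly the hypothesis of the statement.
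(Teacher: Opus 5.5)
Your proposal is correct and follows the paper's proof: you get the two duality pairs from Propositions \ref{prop: carct 2} and \ref{prop: carct 1}, and the purity closures and approximation properties from Holm--J{\o}rgensen's Theorem 3.1. You are slightly more explicit than the paper's one-line argument, since for each class you check closure under direct sums (giving covering) and under direct products (giving preenveloping), whereas the paper names only products for the flat class and only sums for the injective class.
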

\begin{proof}
The duality assertions follow from Propositions \ref{prop: carct 1} and \ref{prop: carct 2}. The closure properties follow from \cite[Thm. 3.1]{HJ08}. Since $\mathsf{FP}_n^{\le d}\text{-}\mathsf{Flat}(R^{\op})$ is closed under direct products and  $\mathsf{FP}_n^{\le d}\text{-}\mathsf{Inj}(R)$ is closed under direct sums, it follows again from \cite[Thm. 3.1]{HJ08} that  $\mathsf{FP}_n^{\le d}\text{-}\mathsf{Flat}(R^{\op})$ is preenveloping and covering and that  $\mathsf{FP}_n^{\le d}\text{-}\mathsf{Inj}(R)$ is preenveloping and covering. 
\end{proof}

When $n\ge 2$ and $d\ge\gD(R)$, the previous result recovers \cite[Cor. 3.7 and Prop. 3.10(4)]{BP} and \cite[Props. 3.4,3.5]{BEI18}.

\begin{prop}\label{cor:perfectpair}
Let $R$ be a ring, $n\geq 2$ and $d\in\mathbb{N}^*\cup\{\infty\}$. Then $R$ is left self $\mathsf{FP}_n^{\le d}$-injective if and only if the pair $\bigl( \mathsf{FP}_n^{\le d}\text{-}\mathsf{Inj}(R),\mathsf{FP}_n^{\le d}\text{-}\mathsf{Inj}(R)^{\perp_1} \bigr)$
is a perfect cotorsion pair.
\end{prop}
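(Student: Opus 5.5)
The plan is to invoke the standard criterion of Holm and Jørgensen \cite[Thm. 3.1]{HJ08}. For a duality pair $(\mathcal{M},\mathcal{C})$, if $\mathcal{M}$ is closed under direct sums and contains $R$, then $(\mathcal{M},\mathcal{M}^{\perp_1})$ is a perfect cotorsion pair. By Proposition \ref{prop: pureza}(b), for $n\ge 2$ the pair $\bigl(\mathsf{FP}_n^{\le d}\text{-}\mathsf{Inj}(R),\mathsf{FP}_n^{\le d}\text{-}\mathsf{Flat}(R^{\op})\bigr)$ is a duality pair. By Remark \ref{rmk: complete}(b), the class $\mathsf{FP}_n^{\le d}\text{-}\mathsf{Inj}(R)$ is closed under direct limits when $n\ge 2$, and in particular under arbitrary direct sums.

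For the implication from left self $\mathsf{FP}_n^{\le d}$-injectivity to perfectness, the only missing hypothesis is that the class contains $R$, and this is exactly the self-injectivity assumption. Applying \cite[Thm. 3.1]{HJ08} (part (c) there, for duality pairs whose first class is closed under coproducts and contains $R$) then yields that $\bigl(\mathsf{FP}_n^{\le d}\text{-}\mathsf{Inj}(R),\mathsf{FP}_n^{\le d}\text{-}\mathsf{Inj}(R)^{\perp_1}\bigr)$ is a perfect cotorsion pair. Since everything else is already established, I expect no serious obstacle in this direction.

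For the converse, suppose the pair is a perfect cotorsion pair. Then it is in particular complete, so $R$ has a special $\mathsf{FP}_n^{\le d}\text{-}\mathsf{Inj}(R)$-precover $B\rightarrowtail A\twoheadrightarrow R$ with $A\in\mathsf{FP}_n^{\le d}\text{-}\mathsf{Inj}(R)$. Since $R$ is projective, this epimorphism splits, so $R$ is a direct summand of $A$. The class $\mathsf{FP}_n^{\le d}\text{-}\mathsf{Inj}(R)$ is closed under direct summands by Remark \ref{rmk: complete}(b), so $R\in\mathsf{FP}_n^{\le d}\text{-}\mathsf{Inj}(R)$; that is, $R$ is left self $\mathsf{FP}_n^{\le d}$-injective. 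More generally, the left half of any cotorsion pair that is complete on the precover side contains every projective, because a special precover of a projective module splits.

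The main point to check is that the version of \cite[Thm. 3.1]{HJ08} being cited indeed requires only closure under coproducts together with $R\in\mathcal{M}$ in order to conclude perfectness, rather than also closure under extensions. This extra closure holds here anyway, by Remark \ref{rmk: complete}(b), so the argument goes through either way.
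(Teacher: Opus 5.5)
Your proposal is correct and follows essentially the same route as the paper, which adapts \cite[Thm. 4.4]{BP}: the duality pair of Proposition \ref{prop: pureza}(b), together with the closure of $\mathsf{FP}_n^{\le d}\text{-}\mathsf{Inj}(R)$ under extensions and direct sums from Remark \ref{rmk: complete}(b), lets one apply \cite[Thm. 3.1]{HJ08} once $R$ lies in the class. For the converse, completeness and splitting are not needed: since $\Ext_R^1(R,-)=0$, the module $R$ lies in ${}^{\perp_1}\bigl(\mathsf{FP}_n^{\le d}\text{-}\mathsf{Inj}(R)^{\perp_1}\bigr)$, and this class equals $\mathsf{FP}_n^{\le d}\text{-}\mathsf{Inj}(R)$ as soon as the pair is a cotorsion pair.
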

\begin{proof}
The proof is analogous to that of \cite[Thm. 4.4]{BP}, with Proposition \ref{prop: pureza} and Remark \ref{rmk: complete} replacing the corresponding results.
\end{proof}

We now study the $\mathsf{FP}_{n}^{\le d}$-flatness and $\mathsf{FP}_{n}^{\le d}$-injectivity of $\Hom$-modules.
The techniques used in this section are inspired by the work of Bo and Zhongkui \cite{BZ12}.

\begin{theorem}\label{thm:4.1}
Let $R$ be a commutative ring, $n\ge 2$ and $d\in\mathbb{N}^*\cup\{\infty\}$. Then the following statements hold:
\begin{enumerate}\renewcommand{\labelenumi}{(\alph{enumi})}
\item $\Hom_R(A,B)$ is $\mathsf{FP}_{n}^{\le d}$-flat for every $\mathsf{FP}_{n}^{\le d}$-injective $R$-module $A$ and every injective $R$-modules $B$.
\item $\Hom_R(A,B)$ is $\mathsf{FP}_{n}^{\le d}$-flat for every projective $R$-module $A$ and every $\mathsf{FP}_{n}^{\le d}$-flat $R$-module $B$.
\end{enumerate}
\end{theorem}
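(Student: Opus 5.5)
For part (a), the plan is to use the standard Hom–tensor duality for finitely presented modules over a commutative ring. Fix $K\in\mathsf{FP}_n^{\le d}(R)$. Since $n\ge 2$, $K$ is finitely $2$-presented, so for injective $B$ there is a natural isomorphism
$$\Tor_1^R\bigl(\Hom_R(A,B),K\bigr)\cong\Hom_R\bigl(\Ext_R^1(K,A),B\bigr).$$
This is the isomorphism used in \cite{BZ12}, of the same type as \cite[Lem. 2.7]{CD96}, which the paper already invokes in Proposition \ref{prop: carct 1}. To justify it, I would take a finite $2$-presentation $F_2\to F_1\to F_0\to K\to 0$ by finitely generated free modules. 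There is the natural identification $\Hom_R(A,B)\otimes_R F\cong\Hom_R(\Hom_R(F,A),B)$ for $F$ finitely generated free. Moreover, $\Hom_R(-,B)$ is exact because $B$ is injective. Computing $\Tor_1$ from the truncated resolution, which only needs $F_0,F_1,F_2$, therefore gives $\Tor_1$ on the left and $\Hom_R(\Ext^1_R(K,A),B)$ on the right, using that $\Ext^1_R(K,A)$ is computed from $F_0\to F_1\to F_2$ after applying $\Hom_R(-,A)$. Since $A$ is $\mathsf{FP}_n^{\le d}$-injective, $\Ext_R^1(K,A)=0$, so the $\Tor$ vanishes and $\Hom_R(A,B)$ is $\mathsf{FP}_n^{\le d}$-flat.

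For part (b), I would first reduce to the free case. If $A=R^{(I)}$, then $\Hom_R(A,B)\cong B^I$. The class $\mathsf{FP}_n^{\le d}\text{-}\mathsf{Flat}$ is closed under direct products when $n\ge 2$, by the earlier remark on $\mathsf{FP}_n^{\le d}$-flat modules. Hence $B^I$ is $\mathsf{FP}_n^{\le d}$-flat. For a general projective $A$, write $A\oplus A'=R^{(I)}$; then $\Hom_R(A,B)$ is a direct summand of $B^I$. Since the class is closed under direct summands, $\Hom_R(A,B)$ is $\mathsf{FP}_n^{\le d}$-flat.

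The main obstacle is verifying in part (a) that the Hom–tensor isomorphism holds in degree $1$ using only a $2$-presentation, including its naturality, and that commutativity of $R$ makes $\Hom_R(A,B)$ a module of the correct side. If that becomes delicate, the fallback is to cite \cite[Lem. 2.7]{CD96} directly, or the analogous lemma in \cite{BZ12}.
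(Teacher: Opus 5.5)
Your proposal is correct and follows essentially the paper's route. In part (a) the paper applies the Hom–tensor duality $\Hom_R(A,B)\otimes_R M\cong\Hom_R(\Hom_R(M,A),B)$ (finitely presented $M$, injective $B$) to a sequence $Q\rightarrowtail F\twoheadrightarrow K$ whose kernel $Q$ is finitely presented because $n\ge 2$, whereas you package the same computation as $\Tor_1^R(\Hom_R(A,B),K)\cong\Hom_R(\Ext^1_R(K,A),B)$ via a finite $2$-presentation; part (b) is the same product/direct-summand argument as in the paper.
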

\begin{proof} ~\
\begin{itemize}
\item For part \textbf{(a)}, let $K\in\mathsf{FP}_n^{\le d}(R)$, and fix an exact sequence
$Q\rightarrowtail F\twoheadrightarrow K$ with $F$ finitely generated and free.  Since $A$ is an $\mathsf{FP}_n^{\le d}$-injective $R$-module, applying $\Hom_R(-, A)$ yields the exact sequence
$\Hom_R(K,A)\rightarrowtail \Hom_R(F,A)\twoheadrightarrow \Hom_R(Q,A)$. 
Now let $B$ be an injective $R$-module. Applying $\Hom_R(-,B)$ gives an exact sequence 
$\Hom_R(\Hom_R(Q,A),B)\rightarrowtail \Hom_R(\Hom_R(F,A),B)\twoheadrightarrow \Hom_R(\Hom_R(K,A),B)$. 
By \cite[Thm. 2.6.13]{WK16}, and using that $n\ge 2$, this exactness is
equivalent to the exactness of the sequence
$\Hom_R(A,B)\otimes_R Q\rightarrowtail \Hom_R(A,B)\otimes_R F\twoheadrightarrow \Hom_R(A,B)\otimes_R K$. Therefore, $\Hom_R(A,B)$ is $\mathsf{FP}_{n}^{\le d}$-flat.
\item For part \textbf{(b)}, let $A$ be a projective $R$-module. Then there exists a projective
$R$-module $Q$ such that $A\oplus Q\cong\bigoplus_{i\in I} R$
for some index set $I$. For an $\mathsf{FP}_n^{\le d}$-flat $R$-module $B$, we have
$\Hom_R(A,B)\oplus\Hom_R(Q,B)\cong\prod_{i\in I}B$. Since $\mathsf{FP}_n^{\le d}$ flat $R$-modules are closed under products and direct summands, $\Hom_R(A,B)$ is $\mathsf{FP}_n^{\le d}$-flat.
\end{itemize}
\end{proof}

\begin{cor}\label{cor:4.2}
Let $R$ be a commutative ring, $n\ge 2$, and $d\in\mathbb{N}^*\cup\{\infty\}$. Then:
\begin{enumerate}\renewcommand{\labelenumi}{(\alph{enumi})}
\item $\Hom_R(A,B)$ is $\mathsf{FP}_{n}^{\le d}$-flat for all injective $R$-modules $A$ and $B$.
\item $\Hom_R(A,B)$ is $\mathsf{FP}_{n}^{\le d}$-flat for all projective $R$-modules $A$ and $B$.
\end{enumerate}
\end{cor}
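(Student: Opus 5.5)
Both parts will be deduced directly from Theorem \ref{thm:4.1}, by checking that the modules $A$ satisfy the hypotheses there.

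For part (a), let $A$ and $B$ be injective $R$-modules. An injective module satisfies $\Ext_R^1(K,A)=0$ for every $R$-module $K$, and in particular for every $K\in\mathsf{FP}_n^{\le d}(R)$. Hence $A$ is $\mathsf{FP}_n^{\le d}$-injective; equivalently, $\mathsf{Inj}(R)\subseteq\mathsf{FP}_n^{\le d}\text{-}\mathsf{Inj}(R)$. Since $B$ is injective, Theorem \ref{thm:4.1}(a) applies to the pair $(A,B)$ and shows that $\Hom_R(A,B)$ is $\mathsf{FP}_n^{\le d}$-flat.

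For part (b), let $A$ and $B$ be projective $R$-modules. We need $B$ to be $\mathsf{FP}_n^{\le d}$-flat. Every projective module is flat, so $\Tor_1^R(B,K)=0$ for every $R$-module $K$, and in particular for every $K\in\mathsf{FP}_n^{\le d}(R)$. Here $R$ is commutative, so there is no left/right distinction. Thus $B$ is $\mathsf{FP}_n^{\le d}$-flat, and Theorem \ref{thm:4.1}(b), applied with $A$ projective, gives that $\Hom_R(A,B)$ is $\mathsf{FP}_n^{\le d}$-flat.

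The argument has no real obstacle; it consists only of the two inclusions $\mathsf{Inj}(R)\subseteq\mathsf{FP}_n^{\le d}\text{-}\mathsf{Inj}(R)$ and $\mathsf{Flat}(R)\subseteq\mathsf{FP}_n^{\le d}\text{-}\mathsf{Flat}(R)$. The hypothesis $n\ge 2$ enters only through Theorem \ref{thm:4.1}. In part (a) it is needed for the $\Hom$–tensor evaluation isomorphism on finitely $2$-presented modules. In part (b) it is needed for the closure of $\mathsf{FP}_n^{\le d}$-flat modules under direct products.
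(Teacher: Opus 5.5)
Your proof is correct and matches the paper's route: the corollary is stated without a separate proof as a direct consequence of Theorem~\ref{thm:4.1}. The only inputs are the inclusions you identify, namely that injective modules are $\mathsf{FP}_n^{\le d}$-injective and projective modules are flat, hence $\mathsf{FP}_n^{\le d}$-flat.
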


Recall that, for any ring $R$, the class of flat $R$-modules coincides 
with the class of $\mathsf{FP}_1$-flat modules. Moreover, if $R$ is 
commutative, coherence of $R$ is characterized by the property that 
$\Hom_R(A,B)$ is flat for all injective $R$-modules $A$ and $B$.  In particular, when $d\ge\gD(R)$, Corollary \ref{cor:4.2} yields $\mathsf{FP}_2$-flat modules that need not be $\mathsf{FP}_1$-flat over non-coherent commutative rings. 

\begin{cor}\label{cor:4.3}
Let $R$ be a commutative ring, $n\ge 2$, and $d\in\mathbb{N}^{*}\cup\{\infty\}$.
For an $R$-module $M$, the following statements are equivalent:
\begin{enumerate}\renewcommand{\labelenumi}{(\alph{enumi})}
\item $M$ is $\mathsf{FP}_n^{\le d}(R)$-injective.
\item $\Hom_R(M,E)$ is $\mathsf{FP}_n^{\le d}(R)$-flat for every injective $R$-module $E$.
\item $M\otimes_R E$ is $\mathsf{FP}_n^{\le d}(R)$-injective for every flat $R$-module $E$.
\end{enumerate}
\end{cor}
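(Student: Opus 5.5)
The plan is to prove (a)$\Leftrightarrow$(b) and (a)$\Leftrightarrow$(c) separately, both by reducing to $\Ext$–$\Tor$ duality with respect to an injective cogenerator.

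For (a)$\Leftrightarrow$(b), the direction (a)$\Rightarrow$(b) is exactly Theorem \ref{thm:4.1}(a). For the converse, specialize (b) to $E=\mathbb{Q}/\mathbb{Z}$ made into an injective $R$-module. Since $R$ is commutative, I would take $E=\Hom_{\mathbb{Z}}(R,\mathbb{Q}/\mathbb{Z})$, which is injective. Adjunction gives $\Hom_R(M,E)\cong\Hom_{\mathbb{Z}}(M,\mathbb{Q}/\mathbb{Z})=M^{\flat}$. Thus $M^{\flat}$ is $\mathsf{FP}_n^{\le d}$-flat, and Proposition \ref{prop: carct 1} (which needs $n\ge 2$) yields that $M$ is $\mathsf{FP}_n^{\le d}$-injective.

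For (a)$\Rightarrow$(c), fix $K\in\mathsf{FP}_n^{\le d}(R)$ and a flat module $E$. Since $K$ is finitely $2$-presented, I will use the natural isomorphism
\[
\Ext_R^1(K,M\otimes_R E)\cong \Ext_R^1(K,M)\otimes_R E,
\]
valid for $E$ flat and $K\in\mathsf{FP}_2(R)$. To justify it, take a finite $2$-presentation $F_2\to F_1\to F_0\to K$ with finitely generated free terms. The canonical map $\Hom_R(F_i,M)\otimes_R E\to\Hom_R(F_i,M\otimes_R E)$ is an isomorphism for finitely generated free $F_i$. Exactness of $-\otimes_R E$ then identifies the first cohomology of the two complexes, using only $F_0,F_1,F_2$. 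With this isomorphism, (a) gives $\Ext_R^1(K,M\otimes_R E)=0$, so $M\otimes_R E$ is $\mathsf{FP}_n^{\le d}$-injective. For (c)$\Rightarrow$(a), take $E=R$, which is flat, so $M\cong M\otimes_R R$ is $\mathsf{FP}_n^{\le d}$-injective.

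The main point requiring care is the isomorphism for $\Ext^1$. One must use $F_2$ as well as $F_1,F_0$: the map $\Hom(F_1,M)\to\Hom(F_2,M)$ is needed to compute the kernel in degree one, and tensoring with the flat module $E$ preserves both that kernel and the image. This is precisely where the hypothesis $n\ge 2$ enters, alongside its use in Proposition \ref{prop: carct 1}. Commutativity is used so that $\Hom_R(M,E)$ and $M\otimes_R E$ are again $R$-modules, and so that $\Hom_{\mathbb{Z}}(R,\mathbb{Q}/\mathbb{Z})$ is an injective $R$-module cogenerator.
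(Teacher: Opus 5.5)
Your proof is correct, but it is organized differently from the paper's. The paper proves the cycle (a)$\Rightarrow$(b)$\Rightarrow$(c)$\Rightarrow$(a). Its (a)$\Rightarrow$(b) is Theorem \ref{thm:4.1} and its (c)$\Rightarrow$(a) takes $E=R$, both exactly as you do. Its (b)$\Rightarrow$(c) uses the identity $(M\otimes_R E)^{\flat}\cong\Hom_R(M,E^{\flat})$, the fact that $E^{\flat}$ is injective, and then Proposition \ref{prop: carct 1}.

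Your (b)$\Rightarrow$(a) is this same character-module argument specialized to $E=R$, since $\Hom_R(M,R^{\flat})\cong M^{\flat}$. So the genuinely different ingredient is your direct proof of (a)$\Rightarrow$(c) via the flat base-change isomorphism $\Ext_R^1(K,M\otimes_R E)\cong\Ext_R^1(K,M)\otimes_R E$ for $K\in\mathsf{FP}_2(R)$. You justify it correctly from a finite $2$-presentation, using naturality of $\Hom_R(F,M)\otimes_R E\cong\Hom_R(F,M\otimes_R E)$ for finitely generated free $F$ and exactness of $-\otimes_R E$.

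Each route has its advantage.
\begin{itemize}
\item Your route is more elementary and self-contained for that implication: it needs neither character modules nor Theorem \ref{thm:4.1}. It also gives a slightly stronger, $d$-independent fact, namely that flat base change commutes with $\Ext^1_R(K,-)$ for each individual $K\in\mathsf{FP}_2(R)$.
\item The paper's cycle is shorter, needing three implications instead of four. Its single duality identity yields (c) for every flat $E$ at once from (b).
\end{itemize}

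One minor inaccuracy in your closing remark: $\Hom_{\mathbb{Z}}(R,\mathbb{Q}/\mathbb{Z})$ is an injective cogenerator over any ring. Commutativity is really needed only so that $\Hom_R(M,E)$, $M\otimes_R E$, and $\Ext^1_R(K,M)\otimes_R E$ make sense as $R$-modules.
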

\begin{proof} ~\
\begin{itemize}
\item (a)$\Rightarrow$(b): This implication follows from Theorem \ref{thm:4.1}.
\item (b)$\Rightarrow$(c): For any flat $R$-module $E$, we have $(M \otimes_R E)^{\flat}\cong\Hom_R(M, E^{\flat}),$ and since $E^{\flat}$ is injective, $\Hom_R(M, E^{\flat})$ is $\mathsf{FP}_n^{\le d}(R)$-flat by (b). Hence, by Proposition \ref{prop: carct 1}, $M \otimes_R E$ is $\mathsf{FP}_n^{\le d}(R)$-injective.
\item (c)$\Rightarrow$(a): Apply (c) with $E=R$.
\end{itemize}
\end{proof}

\begin{theorem}\label{thm:4.5}
Let $R$ be a commutative ring, $n \ge 2$, and $d\in\mathbb{N}^*\cup \{\infty\}$. The following statements are equivalent:
\begin{enumerate}\renewcommand{\labelenumi}{(\alph{enumi})}
\item $R$ is self $\mathsf{FP}_n^{\le d}(R)$-injective.
\item Every injective $R$-module is $\mathsf{FP}_n^{\le d}(R)$-flat.
\item Every flat $R$-module is $\mathsf{FP}_n^{\le d}(R)$-injective.
\item $\Hom_R(A,B)$ is $\mathsf{FP}_n^{\le d}(R)$-injective for all free $R$-modules $A$ and $B$.
\item $\Hom_R(A,B)$ is $\mathsf{FP}_n^{\le d}(R)$-injective for all projective $R$-modules $A$ and $B$.
\item $\Hom_R(A,B)$ is $\mathsf{FP}_n^{\le d}(R)$-injective for all projective $R$-modules $A$ and flat $R$-modules $B$.
\item $\Hom_R(A,B)$ is $\mathsf{FP}_n^{\le d}(R)$-flat for all flat $R$-modules $A$ and injective $R$-modules $B$.
\end{enumerate}
\end{theorem}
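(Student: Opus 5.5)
We prove the equivalences by running a cycle built from the closure properties of Remark \ref{rmk: complete} and Proposition \ref{prop: pureza}, the character-module dualities of Propositions \ref{prop: carct 2} and \ref{prop: carct 1}, and Corollary \ref{cor:4.3}. Throughout, $n\ge 2$. So $\mathsf{FP}_n^{\le d}\text{-}\mathsf{Inj}(R)$ is closed under direct sums, direct products, direct summands, direct limits and pure submodules. Likewise, $\mathsf{FP}_n^{\le d}\text{-}\mathsf{Flat}(R)$ is closed under products, summands and direct limits.

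\textbf{(a)$\Rightarrow$(c).} If $R$ is $\mathsf{FP}_n^{\le d}$-injective, then every free module $R^{(I)}$ is $\mathsf{FP}_n^{\le d}$-injective by closure under direct sums. By Lazard's theorem a flat module is a direct limit of finitely generated free modules, so closure under direct limits gives (c).

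\textbf{(c)$\Rightarrow$(b).} Let $E$ be injective. Then $E^{\flat}$ is flat, hence $\mathsf{FP}_n^{\le d}$-injective by (c). By Proposition \ref{prop: carct 1}, $E^{\flat\flat}$ is $\mathsf{FP}_n^{\le d}$-flat. Since $E\rightarrowtail E^{\flat\flat}$ is a pure monomorphism and $E$ is injective, it splits. So $E$ is a direct summand of an $\mathsf{FP}_n^{\le d}$-flat module, and (b) follows.

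\textbf{(b)$\Rightarrow$(a).} Let $E$ be an injective cogenerator, say $E=R^{\flat}$, which is injective. By (b), $R^{\flat}$ is $\mathsf{FP}_n^{\le d}$-flat. By Proposition \ref{prop: carct 1} this is equivalent to $R$ being $\mathsf{FP}_n^{\le d}$-injective.

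\textbf{The Hom conditions (d)--(f).} We use the chain (c)$\Rightarrow$(f)$\Rightarrow$(e)$\Rightarrow$(d)$\Rightarrow$(a).
\begin{itemize}
\item[--] (c)$\Rightarrow$(f): write $A\oplus Q\cong R^{(I)}$. Then $\Hom_R(A,B)$ is a direct summand of $\Hom_R(R^{(I)},B)\cong B^{I}$. This product is $\mathsf{FP}_n^{\le d}$-injective because $B$ is flat and the class is closed under products.
\item[--] (f)$\Rightarrow$(e)$\Rightarrow$(d): these are trivial.
\item[--] (d)$\Rightarrow$(a): take $A=B=R$, so that $\Hom_R(R,R)\cong R$.
\end{itemize}

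\textbf{(g).} For (a)$\Rightarrow$(g), let $A$ be flat and $B$ injective. By (c), $A$ is $\mathsf{FP}_n^{\le d}$-injective. Theorem \ref{thm:4.1}(a) then gives that $\Hom_R(A,B)$ is $\mathsf{FP}_n^{\le d}$-flat. For (g)$\Rightarrow$(b), take $A=R$, so that $\Hom_R(R,B)\cong B$.

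\textbf{Main obstacle.} The delicate point is (c)$\Rightarrow$(b). It needs the pure embedding $E\rightarrowtail E^{\flat\flat}$ to split, which holds because injectives are pure-injective. It also needs the duality of Proposition \ref{prop: carct 1}, which requires $n\ge 2$. The step using Lazard's theorem relies on closure of $\mathsf{FP}_n^{\le d}\text{-}\mathsf{Inj}(R)$ under direct limits, which also holds only for $n\ge2$. Both steps need checking to confirm the hypothesis $n\ge 2$ is used consistently.
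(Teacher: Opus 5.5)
\textbf{The error.} Your step (c)$\Rightarrow$(b) relies on a false claim: for an injective $R$-module $E$, the character module $E^{\flat}$ need not be flat. The duality valid over every ring goes the other way ($M$ flat $\iff$ $M^{\flat}$ injective). In fact, ``$E^{\flat}$ is flat for every injective $E$'' forces $R$ to be coherent:

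\begin{itemize}
\item Take flat modules $F_i$. Then $M=\bigoplus_i F_i^{\flat}$ is absolutely pure, hence pure in its injective envelope $E(M)$.
\item So $M^{\flat}\cong\prod_i F_i^{\flat\flat}$ is a direct summand of $E(M)^{\flat}$, and it would be flat.
\item Then its pure submodule $\prod_i F_i$ would be flat, and Chase's theorem would make $R$ coherent.
\end{itemize}

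The hypothesis $n\ge 2$ exists precisely to cover non-coherent rings, so you cannot feed $E^{\flat}$ into (c). The splitting of $E\rightarrowtail E^{\flat\flat}$, which you flagged as the delicate point, is fine. It is the sentence before it that fails.

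\textbf{Why the proof survives.} The error is not fatal, because the step is redundant. All your other implications are correct:
\begin{itemize}
\item (a)$\Rightarrow$(c), via Lazard and closure of $\mathsf{FP}_n^{\le d}\text{-}\mathsf{Inj}(R)$ under direct sums and direct limits (for $n\ge 2$);
\item (c)$\Rightarrow$(f)$\Rightarrow$(e)$\Rightarrow$(d)$\Rightarrow$(a);
\item (a)$\Rightarrow$(g), via (a)$\Rightarrow$(c) and Theorem~\ref{thm:4.1}(a);
\item (g)$\Rightarrow$(b), with $A=R$;
\item (b)$\Rightarrow$(a), via the injective module $R^{\flat}$ and Proposition~\ref{prop: carct 1}.
\end{itemize}
So (a)$\Rightarrow$(g)$\Rightarrow$(b)$\Rightarrow$(a) already closes the loop, and deleting (c)$\Rightarrow$(b) leaves a complete proof. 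Alternatively, obtain (c)$\Rightarrow$(b) as (c)$\Rightarrow$(a) (trivial, since $R$ is flat) followed by Theorem~\ref{thm:4.1}(a) with $A=R$. That is exactly how the paper gets (a)$\Rightarrow$(b), through Corollary~\ref{cor:4.3}.

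\textbf{Comparison with the paper.} The paper proves (b)$\Rightarrow$(c) by applying (b) to $M^{\flat}$ for flat $M$ (using the correct direction of the duality) and then Proposition~\ref{prop: carct 1}. It proves (b)$\Rightarrow$(g) from the fact that $\Hom_R(A,B)$ is injective for $A$ flat and $B$ injective. Your (a)$\Rightarrow$(c) via Lazard and your (b)$\Rightarrow$(a) via $R^{\flat}$ are valid alternatives. They trade part of the character-module argument for closure of $\mathsf{FP}_n^{\le d}\text{-}\mathsf{Inj}(R)$ under direct limits.
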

\begin{proof} ~\
\begin{itemize}
\item (a)$\Rightarrow$(b): This implications follows by Corollary \ref{cor:4.3}.
\item (b)$\Rightarrow$(c): Let $M$ be a flat $R$-module. Then $M^{\flat}$ is injective and hence $\mathsf{FP}_n^{\le d}(R)$-flat by (b). Proposition \ref{prop: carct 1} then implies that $M$ is $\mathsf{FP}_n^{\le d}(R)$-injective. 
\item (c)$\Rightarrow$ (f): Let $A$ be projective and $B$ flat. There exists a projective module $C$ such that $A\oplus C\cong\bigoplus_{i\in I} R$. Then $\Hom_R(A,B)\oplus\Hom_R(C,B)\cong\prod_{i\in I}B$. Since $B$ is $\mathsf{FP}_n^{\le d}(R)$-injective by (c), so is $\Hom_R(A,B)$. 
\item (f)$\Rightarrow$(e)$\Rightarrow$(d): Immediate.
\item (d)$\Rightarrow$(a): This follows by taking $A=B=R$.
\item (g)$\Rightarrow$(b): This follows by taking $A=R$. 
\item (b)$\Rightarrow$(g): If $A$ is flat and $B$ injective, then $\Hom_R(A,B)$ is injective by \cite[Thm. 2.5.5]{WK16},and hence $\mathsf{FP}_n^{\le d}(R)$-flat by (b).
\end{itemize}
\end{proof}

\begin{prop}
Let $R$ be a commutative ring, $n\in\mathbb{N}^*$, and $d\in\mathbb{N}^*\cup\{\infty\}$. For an $R$-module $M$, the following statements are equivalent:
\begin{enumerate}\renewcommand{\labelenumi}{(\alph{enumi})}
\item $M$ is $\mathsf{FP}_n^{\le d}(R)$-flat.
\item $\Hom_R(M,E)$ is $\mathsf{FP}_n^{\le d}(R)$-injective for every injective $R$-module $E$.
\item $M\otimes_R E$ is $\mathsf{FP}_n^{\le d}(R)$-flat for every flat $R$-module $E$.
\end{enumerate}
\end{prop}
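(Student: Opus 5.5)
The plan is to prove the cycle (a)$\Rightarrow$(b)$\Rightarrow$(c)$\Rightarrow$(a) using Hom–tensor adjunction and the character-module criterion of Proposition \ref{prop: carct 2}. Note that $n\ge 1$ is allowed here, so Proposition \ref{prop: carct 1} is unavailable; we must argue only with the direction that holds for all $n$. The key tool is the natural isomorphism, valid for any injective $E$ and any $K$,
$$\Ext_R^1\bigl(K,\Hom_R(M,E)\bigr)\cong \Hom_R\bigl(\Tor_1^R(M,K),E\bigr),$$
which comes from adjunction together with exactness of $\Hom_R(-,E)$. Since $R$ is commutative, there is no left/right distinction.

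\textbf{(a)$\Rightarrow$(b).} Let $K\in\mathsf{FP}_n^{\le d}(R)$. By (a), $\Tor_1^R(M,K)=0$. The displayed isomorphism then gives $\Ext_R^1(K,\Hom_R(M,E))=0$, so $\Hom_R(M,E)$ is $\mathsf{FP}_n^{\le d}$-injective.

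\textbf{(b)$\Rightarrow$(c).} Let $E$ be flat. By Proposition \ref{prop: carct 2}, it suffices to show that $(M\otimes_R E)^{\flat}$ is $\mathsf{FP}_n^{\le d}$-injective. By adjunction, $(M\otimes_R E)^{\flat}\cong \Hom_R(M,E^{\flat})$. Since $E$ is flat, $E^{\flat}$ is injective, so (b) applies directly.

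\textbf{(c)$\Rightarrow$(a).} Take $E=R$; then $M\otimes_R R\cong M$ and the conclusion is immediate.

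As a cross-check, (b)$\Rightarrow$(a) can also be obtained directly by taking $E=\mathbb{Q}/\mathbb{Z}$-dual cogenerators. Concretely, put $E$ equal to an injective cogenerator, for instance $R^{\flat}$. Then $\Hom_R(\Tor_1^R(M,K),E)=0$ forces $\Tor_1^R(M,K)=0$.

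The only delicate point is justifying the displayed isomorphism for $\Ext^1$, rather than just $\Hom$. To do this, take a projective resolution $P_\bullet\to K$. Then
$$\Hom_R\bigl(P_\bullet,\Hom_R(M,E)\bigr)\cong \Hom_R\bigl(M\otimes_R P_\bullet,E\bigr),$$
and since $E$ is injective, $\Hom_R(-,E)$ commutes with homology, which yields the isomorphism. Commutativity is used so that $\Hom_R(M,E)$ and $M\otimes_R E$ are again $R$-modules. With this, none of the steps requires $n\ge 2$.
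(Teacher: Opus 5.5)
Your proposal is correct and follows the paper's proof essentially step by step. You use the adjunction isomorphism $\Ext_R^1(K,\Hom_R(M,E))\cong\Hom_R(\Tor_1^R(M,K),E)$ for (a)$\Rightarrow$(b), the identification $(M\otimes_R E)^{\flat}\cong\Hom_R(M,E^{\flat})$ together with Proposition \ref{prop: carct 2} for (b)$\Rightarrow$(c), and $E=R$ for (c)$\Rightarrow$(a), exactly as the paper does; your extra justification of the $\Ext^1$ isomorphism and your remark that the injective cogenerator $R^{\flat}$ gives (b)$\Rightarrow$(a) directly are both correct, though not needed.
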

\begin{proof} ~\
\begin{itemize}
\item (a)$\Rightarrow$(b): Let $E$ be an injective $R$-module and let 
$K\in\mathsf{FP}_n^{\le d}(R)$. By the standard adjunction, 
$\Ext_R^1(K,\Hom_R(M,E))\cong\Hom_R(\Tor_1^R(M,K),E)$.
By (a), $\Tor_1^R(M,K)=0$, and hence $\Hom_R(M,E)$ is
$\mathsf{FP}_n^{\le d}$-injective.
\item  (b)$\Rightarrow$(c): Let $E$ be a flat $R$-module. By Proposition \ref{prop: carct 2}, 
it suffices to show that $(M\otimes_R E)^{\flat}$ is $\mathsf{FP}_n^{\le d}(R)$-injective. Since $(M\otimes_R E)^{\flat}\cong\Hom_R(M,E^{\flat})$ and $E^{\flat}$ is injective, this follows from (b). 
\item (c)$\Rightarrow$(a): This follows by taking $E=R$.
\end{itemize}
\end{proof}

\begin{dfn} Let $R$ be a ring, and $n\in\mathbb{N}^{*}$ and $d\in\mathbb{N}^{*}\cup\{\infty\}$. 
An $R^{\op}$-module $M$ is said to be \newterm{$\mathsf{FP}_n^{\le d}$-cotorsion} if $\Ext_R^1(F, M)=0$ for every $\mathsf{FP}_n^{\le d}$-flat $R^{\op}$-module $F$. 
\end{dfn}

We denote this class by $\mathsf{FP}_n^{\le d}\text{-}\mathsf{Cot}(R^{\op})$. When $d\geq\gD(R)$, these modules coincide with the \newterm{$(n,0)$-cotorsion} modules studied by Zhu in \cite{Zhu17}. More generally, taking $\mathcal{C}=\mathsf{FP}_n^{\le d}(R)$, the $\mathsf{FP}_n^{\le d}$-cotorsion modules are precisely the \newterm{$\mathcal{C}$-cotorsion} modules considered by Zhu in \cite{Zhu21}. In particular, \cite[Thm. 1]{Zhu21} gives the following characterization.

\begin{prop}
Let $R$ be a ring, $n\in\mathbb{N}^*$, $d\in\mathbb{N}^*\cup \{\infty\}$, and let $M$ be an $R^{\op}$-module. Then the following statements are equivalent:
\begin{enumerate}\renewcommand{\labelenumi}{(\alph{enumi})}
\item $M$ is $\mathsf{FP}_n^{\le d}$-cotorsion.
\item $M$ is injective with respect to every short exact sequence $K\rightarrowtail P\twoheadrightarrow F$ of $R^{\op}$-modules with $F\in\mathsf{FP}_n^{\le d}\text{-}\mathsf{Flat}(R^{\op})$.
\item Every $F\in\mathsf{FP}_n^{\le d}\text{-}\mathsf{Flat}(R^{\op})$ is projective with respect to every short exact sequence $M\rightarrowtail M'\twoheadrightarrow M''$ of $R^\op$-modules.
\end{enumerate}
Moreover, if the injective envelope $E(M)$ belongs to $\mathsf{FP}_n^{\le d}\text{-}\mathsf{Flat}(R^{\op})$, then these conditions are also equivalent to:
\begin{enumerate}
\item[(e)] For every exact sequence $M\rightarrowtail F \twoheadrightarrow L$ with $F\in\mathsf{FP}_n^{\le d}\text{-}\mathsf{Flat}(R^{\op})$, the morphism $F \to L$ is an $\mathsf{FP}_n^{\le d}$-flat precover of $L$.
\item[(f)] $M$ is the kernel of some $\mathsf{FP}_n^{\le d}$-flat precover $E\to L$, where $E$ is injective.
\end{enumerate}
\end{prop}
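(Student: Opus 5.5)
The plan is to apply Zhu's general theory of $\mathcal{C}$-cotorsion modules \cite{Zhu21} with $\mathcal{C}=\mathsf{FP}_n^{\le d}(R)$, and to check that its hypotheses hold in our setting. Everything else then reduces to standard long exact sequence arguments for $\Ext$.

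\textbf{The equivalences (a)$\Leftrightarrow$(b)$\Leftrightarrow$(c).} These are formal.

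For (a)$\Rightarrow$(b), take a short exact sequence $K\rightarrowtail P\twoheadrightarrow F$ with $F$ $\mathsf{FP}_n^{\le d}$-flat (and $P$ projective). Applying $\Hom_R(-,M)$ gives the exact sequence $\Hom_R(P,M)\to\Hom_R(K,M)\to\Ext^1_R(F,M)=0$. So every map $K\to M$ extends to $P$.

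For (b)$\Rightarrow$(a), we read the same exact sequence backwards. Since $\Ext^1_R(P,M)=0$, the cokernel of $\Hom_R(P,M)\to\Hom_R(K,M)$ is exactly $\Ext^1_R(F,M)$, and it vanishes by the extension property.

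For (a)$\Leftrightarrow$(c), apply $\Hom_R(F,-)$ to $M\rightarrowtail M'\twoheadrightarrow M''$. Surjectivity of $\Hom_R(F,M')\to\Hom_R(F,M'')$ for all such sequences is equivalent to $\Ext^1_R(F,M)=0$. For the converse direction, use an injective $M'$, for which $\Ext^1_R(F,M')=0$.

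\textbf{The equivalences with (e) and (f).} Here I use the hypothesis $E(M)\in\mathsf{FP}_n^{\le d}\text{-}\mathsf{Flat}(R^{\op})$. I also need the facts from Remark (b) of this section: the class of $\mathsf{FP}_n^{\le d}$-flat modules is closed under extensions and direct limits, and it contains the projectives.

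For (a)$\Rightarrow$(e), let $M\rightarrowtail F\twoheadrightarrow L$ be exact with $F$ flat in our sense, and let $G\to L$ be a map with $G$ $\mathsf{FP}_n^{\le d}$-flat. The obstruction to lifting $G\to L$ through $F$ lies in $\Ext^1_R(G,M)$, which is $0$ by (a). So $F\to L$ is a precover.

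(e)$\Rightarrow$(f) follows by taking $F=E(M)$ and $L=E(M)/M$.

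For (f)$\Rightarrow$(a), suppose $M=\Ker(E\to L)$ with $E$ injective and $E\to L$ a precover. Take $G$ $\mathsf{FP}_n^{\le d}$-flat and consider the sequence $\Hom_R(G,E)\to\Hom_R(G,L')\to\Ext^1_R(G,M)\to\Ext^1_R(G,E)=0$, where $L'$ is the image of $E\to L$. I expect this to be the main obstacle. The precover property lets maps $G\to L$ lift, but it is stated for maps into $L$ rather than into the image $L'$. I would first reduce to the case where $E\to L$ is surjective, by noting that a precover $E\to L$ with $E$ injective has image $L'$ and that $E\to L'$ is still a precover. This works because maps $G\to L'$ compose into $L$ and lift through $E$. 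Surjectivity of $\Hom_R(G,E)\to\Hom_R(G,L')$ then forces $\Ext^1_R(G,M)=0$, since $\Ext^1_R(G,E)=0$ by injectivity of $E$.

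With these hypotheses verified, the statement is exactly \cite[Thm. 1]{Zhu21} specialized to $\mathcal{C}=\mathsf{FP}_n^{\le d}(R)$. I would cite it for bookkeeping, while the arguments above give the direct proof.
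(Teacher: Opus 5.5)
Your proposal is correct. It differs from the paper in that it actually supplies an argument. The paper gives no proof: it observes that $\mathsf{FP}_n^{\le d}$-cotorsion modules are Zhu's $\mathcal{C}$-cotorsion modules for $\mathcal{C}=\mathsf{FP}_n^{\le d}(R)$ and quotes \cite[Thm. 1]{Zhu21}. You instead check each implication directly via the long exact $\Hom$/$\Ext$ sequences, and every step is sound.

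What your route buys is that it shows the result is purely formal. None of the closure properties of $\mathsf{FP}_n^{\le d}\text{-}\mathsf{Flat}(R^{\op})$ that you list (extensions, direct limits) is ever used, and nothing depends on $n$ or $d$. The same argument works with an arbitrary class $\mathcal{F}$ in place of the $\mathsf{FP}_n^{\le d}$-flat modules. The hypothesis $E(M)\in\mathsf{FP}_n^{\le d}\text{-}\mathsf{Flat}(R^{\op})$ enters only in (e)$\Rightarrow$(f), where it makes $M\rightarrowtail E(M)\twoheadrightarrow E(M)/M$ an admissible sequence in (e). For the same reason, the opening claim that Zhu's hypotheses need to be ``checked'' is unnecessary: your direct proof stands on its own.

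Two small remarks:
\begin{itemize}
\item The ``main obstacle'' in (f)$\Rightarrow$(a) is not really one. Projective modules are $\mathsf{FP}_n^{\le d}$-flat, so any $\mathsf{FP}_n^{\le d}$-flat precover is automatically surjective. Your passage to the image is correct anyway.
\item In (a)$\Rightarrow$(b), the parenthetical restriction to projective $P$ is unnecessary. Your argument handles an arbitrary middle term, which is what (b) asks for. Projectivity of $P$ is needed only in (b)$\Rightarrow$(a), where you are free to choose such a sequence.
\end{itemize}
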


By Proposition \ref{prop: pureza}, $\mathsf{FP}_n^{\le d}\text{-}\mathsf{Inj}(R)$ is preenveloping for $n\ge 2$. If $\mathsf{FP}_n^{\le d}\text{-}\mathsf{Proj}(R)$ is closed under pure quotients, we show below that it is enveloping. For $d\ge\gD(R)$, this is know by \cite[Prop. 3.7]{BEI18}.

\begin{prop}
Let $R$ be a ring, $n\ge 2$ and $d\in\mathbb{N}^{*}\cup\{\infty\}$. The following statements are equivalent:
\begin{enumerate}\renewcommand{\labelenumi}{(\alph{enumi})}
\item $\mathsf{FP}_n^{\le d}\text{-}\mathsf{Proj}(R)$ is closed under pure quotients.
\item $\big(\mathsf{FP}_n^{\le d}\text{-}\mathsf{Proj}(R), \mathsf{FP}_n^{\le d}\text{-}\mathsf{Cot}(R^{\op})\big)$ is a duality pair.
\end{enumerate}
If these conditions hold, then $\mathsf{FP}_n^{\le d}\text{-}\mathsf{Inj}(R)$ is enveloping.
\end{prop}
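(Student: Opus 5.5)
The plan is to reduce both directions to one identification: under (a), the class $\mathsf{FP}_n^{\le d}\text{-}\mathsf{Proj}(R)$ equals $\bigl[\mathsf{FP}_n^{\le d}\text{-}\mathsf{Flat}(R^{\op})\bigr]^{\top_1}$. The basic tool is the standard isomorphism
$$\Ext^1_{R^{\op}}(F,M^{\flat})\cong \Tor_1^R(F,M)^{\flat}$$
for an $R^{\op}$-module $F$ and an $R$-module $M$. It shows that $M^{\flat}\in\mathsf{FP}_n^{\le d}\text{-}\mathsf{Cot}(R^{\op})$ if and only if $\Tor_1^R(F,M)=0$ for every $\mathsf{FP}_n^{\le d}$-flat $F$. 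In other words, $M^{\flat}$ is $\mathsf{FP}_n^{\le d}$-cotorsion exactly when $M\in\bigl[\mathsf{FP}_n^{\le d}\text{-}\mathsf{Flat}(R^{\op})\bigr]^{\top_1}$. For the second axiom of a duality pair, $\mathsf{FP}_n^{\le d}\text{-}\mathsf{Cot}(R^{\op})$ is a right $\Ext^1$-orthogonal class, so it is automatically closed under direct summands and finite direct sums. Hence (b) amounts precisely to the equality of classes above.

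For (b)$\Rightarrow$(a) I would simply invoke \cite[Thm. 3.1]{HJ08}: the left-hand class of any duality pair is closed under pure submodules, pure quotients and pure extensions. Pure quotients are exactly what (a) asks for.

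For (a)$\Rightarrow$(b) I would prove the two inclusions separately.

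The inclusion $\mathsf{FP}_n^{\le d}\text{-}\mathsf{Proj}(R)\subseteq[\mathsf{FP}_n^{\le d}\text{-}\mathsf{Flat}]^{\top_1}$ holds without (a). By Remark \ref{rmk: complete}(b), each $\mathsf{FP}_n^{\le d}$-projective module is a direct summand of a $\mathcal{C}$-filtered module. Each module in $\mathcal{C}$ is $\Tor_1$-orthogonal to every $\mathsf{FP}_n^{\le d}$-flat module by definition. The class $\{M:\Tor_1^R(F,M)=0\}$ is closed under extensions and direct summands. It is also closed under unions of continuous chains, since $\Tor$ commutes with direct limits; an induction along the filtration then finishes this inclusion.

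For the reverse inclusion I use that $n\ge 2$, so the remark on $\varinjlim\mathsf{FP}_n^{\le d}(R)$ gives
$$[\mathsf{FP}_n^{\le d}\text{-}\mathsf{Flat}(R^{\op})]^{\top_1}=\varinjlim \mathsf{FP}_n^{\le d}(R).$$
Every direct limit of modules in $\mathsf{FP}_n^{\le d}(R)$ is a pure quotient of a direct sum of such modules. That direct sum is $\mathsf{FP}_n^{\le d}$-projective, because the class contains $\mathsf{FP}_n^{\le d}(R)$ and is closed under direct sums. By (a), the direct limit is therefore $\mathsf{FP}_n^{\le d}$-projective. This closes the circle and establishes the equality.

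For the final assertion, I would observe that under (a) and (b) we now have $\mathsf{FP}_n^{\le d}\text{-}\mathsf{Proj}(R)=\varinjlim\mathsf{FP}_n^{\le d}(R)$. By the same remark, this class is closed under direct limits. The cotorsion pair $\bigl(\mathsf{FP}_n^{\le d}\text{-}\mathsf{Proj}(R),\mathsf{FP}_n^{\le d}\text{-}\mathsf{Inj}(R)\bigr)$ is complete by Remark \ref{rmk: complete}(b). A complete cotorsion pair whose left class is closed under direct limits is perfect (Enochs, Xu). In particular its right class $\mathsf{FP}_n^{\le d}\text{-}\mathsf{Inj}(R)$ is enveloping.

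The main obstacle is the inclusion $\varinjlim\mathsf{FP}_n^{\le d}(R)\subseteq\mathsf{FP}_n^{\le d}\text{-}\mathsf{Proj}(R)$. This is the only place where hypothesis (a) enters, and it depends on using the identification of $[\mathsf{FP}_n^{\le d}\text{-}\mathsf{Flat}]^{\top_1}$ with the direct-limit closure, which is available only because $n\ge 2$.
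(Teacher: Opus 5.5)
Your proof is correct, but you obtain the key inclusion $[\mathsf{FP}_n^{\le d}\text{-}\mathsf{Flat}(R^{\op})]^{\top_1}\subseteq\mathsf{FP}_n^{\le d}\text{-}\mathsf{Proj}(R)$ by a different, heavier route than the paper.

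The paper adapts the argument from the case $d\ge\gD(R)$. Suppose $M^{\flat}$ is $\mathsf{FP}_n^{\le d}$-cotorsion. Take a special precover $K\rightarrowtail P\twoheadrightarrow M$ from the complete cotorsion pair of Remark~\ref{rmk: complete}, with $P$ $\mathsf{FP}_n^{\le d}$-projective and $K$ $\mathsf{FP}_n^{\le d}$-injective. Since $n\ge 2$, Proposition~\ref{prop: carct 1} makes $K^{\flat}$ $\mathsf{FP}_n^{\le d}$-flat. Hence $M^{\flat}\rightarrowtail P^{\flat}\twoheadrightarrow K^{\flat}$ splits, the precover is pure, and (a) gives that $M$ is $\mathsf{FP}_n^{\le d}$-projective. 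The reverse inclusion follows directly from Proposition~\ref{prop: carct 2} via $\Ext^1_R(M,F^{\flat})\cong\Tor_1^R(F,M)^{\flat}$, so no filtration argument is needed.

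You instead go through the identification $[\mathsf{FP}_n^{\le d}\text{-}\mathsf{Flat}(R^{\op})]^{\top_1}=\varinjlim\mathsf{FP}_n^{\le d}(R)$ from part (c) of the first remark in Section~\ref{s:planos y cotorsion relativos}, and then write direct limits as pure quotients of direct sums. This is legitimate, since that remark is stated earlier in the paper. It is, however, a heavier input. The standard proof of that identification uses essentially the same purity argument, plus closure of $\varinjlim\mathsf{FP}_n^{\le d}(R)$ under extensions, which relies on modules in $\mathsf{FP}_n^{\le d}(R)$ being finitely $2$-presented. The precover argument would let you avoid the remark altogether.

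The same applies to the enveloping claim: you do not need the remark to see that the left class is closed under direct limits. Under (a) this is immediate, because a direct limit is a pure quotient of a direct sum.

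In exchange, your route gives a sharper conclusion. For $n\ge 2$ one always has $\mathsf{FP}_n^{\le d}\text{-}\mathsf{Proj}(R)\subseteq\varinjlim\mathsf{FP}_n^{\le d}(R)$. Moreover, (a) and (b) are each equivalent to equality, that is, to the $\mathsf{FP}_n^{\le d}$-projective modules being exactly the direct limits of modules in $\mathsf{FP}_n^{\le d}(R)$.
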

\begin{proof}
The proof is similar to that of \cite[Prop. 3.7]{BEI18}, replacing the corresponding statements with 
Remark \ref{rmk: complete} and Propositions \ref{prop: carct 2} and \ref{prop: carct 1}.
\end{proof}

By \cite[Prop. 3.2(2)]{BEI18}, for any class $\mathcal{B}$ of $R^{\op}$-modules, $\Ker\Tor_1^R(\mathcal{B}, -)$ is the first half of a perfect cotorsion pair in $\lMod R$. Taking $\mathcal{B}=\mathsf{FP}_n^{\le d}(R^{\op})$ gives the following consequence. For $d\ge\gD(R)$, this recovers \cite[Prop. 3.3]{BEI18}.

\begin{prop}
Let $R$ be a ring, $n\in \mathbb{N}^*$, and $d\in\mathbb{N}^*\cup\{\infty\}$. Then 
$\bigl(\mathsf{FP}_n^{\le d}\text{-}\mathsf{Flat}(R^{}),\mathsf{FP}_n^{\le d}\text{-}\mathsf{Cot}(R^{})\bigr)$
is a perfect cotorsion pair.
\end{prop}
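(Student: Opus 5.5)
The plan is to invoke \cite[Prop. 3.2(2)]{BEI18} directly. Before doing so, I will check that the class in the statement really has the form $\Ker\Tor_1^R(\mathcal{B},-)$ for a suitable class $\mathcal{B}$ of modules over the opposite ring.

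\textbf{Step 1: identify the flat class.} We need a class $\mathcal{B}\subseteq\lMod{R^{\op}}$ with
$$\mathsf{FP}_n^{\le d}\text{-}\mathsf{Flat}(R)=\{M\in\lMod R\mid \Tor_1^R(B,M)=0 \text{ for all } B\in\mathcal{B}\}=\mathcal{B}^{\top_1}.$$
Here the flat class sits on the left side (left $R$-modules). By the definition of $\mathsf{FP}_n^{\le d}$-flatness with the roles of $R$ and $R^{\op}$ swapped, the natural choice is $\mathcal{B}=\mathsf{FP}_n^{\le d}(R^{\op})$. Since $\mathsf{FP}_n^{\le d}$ has no smallness issue up to isomorphism, I will also replace $\mathcal{B}$ by a set of representatives. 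This is harmless, because $\Tor$ vanishing is invariant under isomorphism.

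\textbf{Step 2: the cotorsion pair.} By \cite[Prop. 3.2(2)]{BEI18}, $\mathcal{F}:=\mathcal{B}^{\top_1}$ is the left half of a perfect cotorsion pair $(\mathcal{F},\mathcal{F}^{\perp_1})$. The right half is, by definition, the class of $\mathsf{FP}_n^{\le d}$-cotorsion modules, since those are defined by the vanishing of $\Ext^1(F,-)$ over all $\mathsf{FP}_n^{\le d}$-flat $F$. This gives the statement.

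\textbf{Step 3: sanity check via the standard argument.} I would include a brief justification of why the cited result applies, in case a reader wants it self-contained. First, $\mathcal{F}$ contains the projectives. Second, it is closed under direct sums, direct limits and extensions, by the properties of $\Tor$ already noted in the remark above. Third, it is closed under pure submodules and pure quotients. This follows via character modules: $\Tor_1(B,M)^{\flat}\cong\Ext^1(B,M^{\flat})$ for any $B$, so $M\in\mathcal{F}$ iff $M^{\flat}\in\mathcal{B}^{\perp_1}$, giving the duality pair $(\mathcal{F},\mathcal{B}^{\perp_1})$ in the sense of \cite{HJ08}. Fourth, since $\mathcal{F}$ is closed under pure quotients and extensions and contains $R$, the theorem of Holm and J{\o}rgensen \cite[Thm. 3.1]{HJ08} yields that $(\mathcal{F},\mathcal{F}^{\perp_1})$ is a perfect cotorsion pair.

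The only step that requires real care is the duality, i.e.\ the identity $\Tor_1(B,M)^{\flat}\cong\Ext^1(B,M^{\flat})$. Unlike Proposition \ref{prop: carct 1}, this direction needs no finiteness hypothesis on $B$, so it holds for all $n\ge1$. This is consistent with the statement allowing $n\in\mathbb{N}^*$ rather than requiring $n\ge2$.
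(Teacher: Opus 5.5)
Your proposal is correct and takes the same approach as the paper, which simply applies \cite[Prop. 3.2(2)]{BEI18} with $\mathcal{B}=\mathsf{FP}_n^{\le d}(R^{\op})$ and notes that the cotorsion class is $\mathcal{F}^{\perp_1}$ by definition. Your Step 3 is a valid self-contained unpacking via the duality pair $(\mathcal{F},\mathcal{B}^{\perp_1})$ and \cite[Thm. 3.1]{HJ08}; note only that the inputs needed there are $R\in\mathcal{F}$ and closure under direct sums and extensions (which you verified), whereas closure under pure quotients is a consequence of the duality pair, not a hypothesis.
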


We next study $\mathsf{FP}_n^{\le d}$-flat and $\mathsf{FP}_n^{\le d}$-cotorsion modules over $(n,d)$-coherent rings.

\begin{theorem}\label{cor:n-d-coh-equiv 2}
Let $R$ be a ring, $n\ge 2$ and $d\in\mathbb{N}^{*}\cup\{\infty\}$. The following statements are equivalent:
\begin{enumerate}
\item $R$ is left $(n,d)$-coherent.
\item[(26)] $\Tor^{R}_{j+1}(M,C)=0$ for every integer $j\ge 0$, every $C\in\mathsf{FP}_{n}^{\le d}(R)$, and every  
 $\mathsf{FP}_{n}^{\le d}(R)$-flat  $R^{\op}$-module $M$.
 \item[(27)] $\Tor^{R}_{2}(M,C)=0$ for every $C\in\mathsf{FP}_{n}^{\le d}(R)$ and every  
$\mathsf{FP}_{n}^{\le d}(R)$-flat  $R^{\op}$-module $M$.
\item[(28)] Every $\mathsf{FP}_{n+1}^{\le d}$-flat $R^{\op}$-module is $\mathsf{FP}_{n}^{\le d}$-flat.
\item[(29)] Every \(\mathsf{FP}_n^{\le d}\)-cotorsion $R^{\op}$-module is \(\mathsf{FP}_{n+1}^{\le d}\)-cotorsion.
\item[(30)] The pair $(\mathsf{FP}_n^{\le d}\text{-}\mathsf{Flat}(R^{\op}), \mathsf{FP}_n^{\le d}\text{-}\mathsf{Cot}(R^{\op}))$ is a hereditary cotorsion pair. 
\end{enumerate}
\end{theorem}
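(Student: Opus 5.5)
The plan is to go through the duality between flatness and injectivity given by character modules (Propositions \ref{prop: carct 2} and \ref{prop: carct 1}, valid since $n\ge 2$), the isomorphism $\Tor_i^R(M,C)^{\flat}\cong\Ext_R^i(C,M^{\flat})$, and the already established list in Corollary \ref{cor:n-d-coh-equiv 1}. I will prove the chain (1)$\Rightarrow$(26)$\Rightarrow$(27)$\Rightarrow$(1), then (1)$\Leftrightarrow$(28), then (28)$\Leftrightarrow$(29) and (26)$\Leftrightarrow$(30).

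\textbf{(1)$\Rightarrow$(26).} Let $M$ be $\mathsf{FP}_n^{\le d}$-flat. By Proposition \ref{prop: carct 2}, $M^{\flat}$ is $\mathsf{FP}_n^{\le d}$-injective. For $C\in\mathsf{FP}_n^{\le d}(R)$ we have $\Tor_{j+1}^R(M,C)^{\flat}\cong\Ext_R^{j+1}(C,M^{\flat})$, and the right-hand side vanishes by (18) of Corollary \ref{cor:n-d-coh-equiv 1}. Since $\flat$ is faithful, this gives (26). The step (26)$\Rightarrow$(27) is trivial.

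\textbf{(27)$\Rightarrow$(1).} Take $C\in\mathsf{FP}_n^{\le d}(R)$ and a sequence $K\rightarrowtail F\twoheadrightarrow C$ with $F$ finitely generated free, so that $K\in\mathsf{FP}_{n-1}^{\le d-1}(R)$. Then $\Tor_1^R(M,K)\cong\Tor_2^R(M,C)=0$ for every $\mathsf{FP}_n^{\le d}$-flat $M$, and in particular for every $\mathsf{FP}_n$-flat $M$.

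This is the main obstacle: one must conclude that $K$ is finitely $n$-presented. I would first try the following route. For any $\mathsf{FP}_n$-injective $N$, the module $N^{\flat}$ is $\mathsf{FP}_n$-flat by the case $d=\infty$ of Proposition \ref{prop: carct 1}. Hence $\Ext^1_R(K,N^{\flat\flat})\cong\Tor_1^R(N^{\flat},K)^{\flat}=0$. Because $N\to N^{\flat\flat}$ is a pure monomorphism and $\mathsf{FP}_n$-injectives are closed under pure submodules, I would then try to descend from $N^{\flat\flat}$ to $N$, or else use a $\Tor$-analogue of \cite[Thm. 2.1]{ODL14} (a finitely generated module that is $\Tor_1$-orthogonal to all $\mathsf{FP}_n$-flat modules is finitely $n$-presented, since $\mathsf{FP}_n$-flats are closed under products for $n\ge2$). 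Once $K$ is shown to be finitely $n$-presented, $C\in\mathsf{FP}_{n+1}(R)$, and Proposition \ref{prop: car1} yields (1).

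\textbf{Equivalence with (28).} (1)$\Rightarrow$(28) is clear, because $\mathsf{FP}_n^{\le d}(R)=\mathsf{FP}_{n+1}^{\le d}(R)$. For (28)$\Rightarrow$(1), take an $\mathsf{FP}_{n+1}^{\le d}$-injective $N$. Then $N^{\flat}$ is $\mathsf{FP}_{n+1}^{\le d}$-flat by Proposition \ref{prop: carct 1}, hence $\mathsf{FP}_n^{\le d}$-flat by (28), and so $N$ is $\mathsf{FP}_n^{\le d}$-injective by Proposition \ref{prop: carct 1} again. This is (24), and Corollary \ref{cor:n-d-coh-equiv 1} gives (1).

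\textbf{Equivalence with (29).} (28)$\Leftrightarrow$(29) holds because the pairs $(\mathsf{FP}_k^{\le d}\text{-}\mathsf{Flat},\mathsf{FP}_k^{\le d}\text{-}\mathsf{Cot})$ are cotorsion pairs (the perfect cotorsion pair proposition above). Since passing to the right half of a cotorsion pair reverses inclusion, $\mathsf{FP}_{n+1}^{\le d}\text{-}\mathsf{Flat}\subseteq\mathsf{FP}_n^{\le d}\text{-}\mathsf{Flat}$ is equivalent to $\mathsf{FP}_n^{\le d}\text{-}\mathsf{Cot}\subseteq\mathsf{FP}_{n+1}^{\le d}\text{-}\mathsf{Cot}$.

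\textbf{Equivalence with (30).} Heredity amounts to $\mathsf{FP}_n^{\le d}\text{-}\mathsf{Flat}$ being closed under kernels of epimorphisms, which by the long $\Tor$ sequence is implied by (27). Conversely, assume heredity and let $M$ be $\mathsf{FP}_n^{\le d}$-flat. Choose $L\rightarrowtail P\twoheadrightarrow M$ with $P$ projective; then $L$ is $\mathsf{FP}_n^{\le d}$-flat by heredity, so $\Tor_2^R(M,C)\cong\Tor_1^R(L,C)=0$. This is (27).
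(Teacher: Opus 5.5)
Your arguments for (1)$\Rightarrow$(26)$\Rightarrow$(27), for (28) and (29), and for (27)$\Leftrightarrow$(30) are correct. The one nontrivial implication, (27)$\Rightarrow$(1), is left as a plan, though, and every route you give from (26), (27) or (30) back to (1) passes through it. So as written the equivalence is not established.

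The justification you sketch for the descent from $N^{\flat\flat}$ to $N$ is also not the relevant one. $N$ is already $\mathsf{FP}_n$-injective, so closure of $\mathsf{FP}_n$-injectives under pure submodules tells you nothing about $\Ext^1_R(K,N)$ while $K$ is not yet known to be finitely $n$-presented. The fallback ``$\Tor$-analogue of \cite[Thm.~2.1]{ODL14}'' is only asserted. Closure of $\mathsf{FP}_n$-flat modules under products does not give it by itself: for $n\ge 3$, $\Tor_1^R(R^J,K)=0$ is automatic once $K\in\mathsf{FP}_{n-1}(R)$.

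The missing idea is that what must pass along the pure monomorphism is a property of $K$. Since $n\ge 2$, $K\in\mathsf{FP}_{n-1}(R)\subseteq\mathsf{FP}_1(R)$ is finitely presented. Hence $\Hom_R(K,-)$ is exact on the pure exact sequence $N\rightarrowtail N^{\flat\flat}\twoheadrightarrow N^{\flat\flat}/N$, and the long exact sequence makes $\Ext^1_R(K,N)\to\Ext^1_R(K,N^{\flat\flat})$ injective. Now take $N$ to be $\mathsf{FP}_n^{\le d}$-injective:
\begin{itemize}
\item $N^{\flat}$ is $\mathsf{FP}_n^{\le d}$-flat by Proposition~\ref{prop: carct 1}.
\item By (27), $\Ext^1_R(K,N^{\flat\flat})\cong\Tor_1^R(N^{\flat},K)^{\flat}\cong\Tor_2^R(N^{\flat},C)^{\flat}=0$.
\item Hence $\Ext^2_R(C,N)\cong\Ext^1_R(K,N)=0$. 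This is condition (16), and Theorem~\ref{thm:n-coh 1} gives (1).
\end{itemize}
If you take $N$ merely $\mathsf{FP}_n$-injective, you instead get $K\in\mathsf{FP}_n(R)$ from \cite[Thm.~2.1]{ODL14}, as you intended.

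For comparison, the paper obtains (1)$\Leftrightarrow$(26)$\Leftrightarrow$(27) from Theorem~\ref{thm:n-coh 1} and \cite[Thm.~1]{Zhu2017}. It proves (30)$\Rightarrow$(1) with no explicit descent. For $A\rightarrowtail B\twoheadrightarrow C$ with $A,B$ $\mathsf{FP}_n^{\le d}$-injective, it dualizes to $C^{\flat}\rightarrowtail B^{\flat}\twoheadrightarrow A^{\flat}$. Heredity then makes $C^{\flat}$ $\mathsf{FP}_n^{\le d}$-flat, and Proposition~\ref{prop: carct 1} shows $C$ is $\mathsf{FP}_n^{\le d}$-injective. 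That is condition (19) of Corollary~\ref{cor:n-d-coh-equiv 1}. The finiteness that drives your descent is built into Proposition~\ref{prop: carct 1}, via $K\in\mathsf{FP}_2(R)$.
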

\begin{proof} ~\
\begin{itemize}
\item (1)$\Leftrightarrow$(26)$\Leftrightarrow$(27): These equivalences follow from Theorem \ref{thm:n-coh 1} and \cite[Thm. 1]{Zhu2017}.
\item (1)$\Rightarrow$(28) $\Rightarrow$(29): Immediate.
\item (29)$\Rightarrow$(28): Since
$\mathsf{FP}_n^{\le d}\text{-}\mathsf{Flat}(R^{\op})\subseteq \mathsf{FP}_{n+1}^{\le d}\text{-}\mathsf{Flat}(R^{\op})$,
we have $\mathsf{FP}_{n+1}^{\le d}\text{-}\mathsf{Cot}(R^{\op})\subseteq\mathsf{FP}_n^{\le d}\text{-}\mathsf{Cot}(R^{\op})$. By hypothesis,
the reverse inclusion also holds and hence
$\mathsf{FP}_{n+1}^{\le d}\text{-}\mathsf{Cot}(R^{\op})
=\mathsf{FP}_n^{\le d}\text{-}\mathsf{Cot}(R^{\op})$. Since $\big(\mathsf{FP}_n^{\le d}\text{-}\mathsf{Flat}(R^{\op}),
\mathsf{FP}_n^{\le d}\text{-}\mathsf{Cot}(R^{\op})\big)$ is a cotorsion pair, the result follows.
\item (28)$\Rightarrow$(1): By Corollary \ref{cor:n-d-coh-equiv 1}, it suffices to show that every
$\mathsf{FP}_{n+1}^{\le d}$-injective $R$-module is $\mathsf{FP}_{n}^{\le d}$-injective.
Let $M$ be an $\mathsf{FP}_{n+1}^{\le d}$-injective. By Proposition \ref{prop: carct 1}, $M^{\flat}$
is $\mathsf{FP}_{n+1}^{\le d}$-flat. By assumption, $M^{\flat}$ is $\mathsf{FP}_{n}^{\le d}$-flat and Proposition \ref{prop: carct 1} then implies that $M$ is $\mathsf{FP}_{n}^{\le d}$-injective.
\item (27)$\Rightarrow$(30): It suffices to show that 
$\mathsf{FP}_n^{\le d}\text{-}\mathsf{Flat}(R^{\op})$ 
is resolving. It is closed under extensions and contains all projective modules. Let $A\rightarrowtail B\twoheadrightarrow C$ be exact, with $B,C \in \mathsf{FP}_n^{\le d}\text{-}\mathsf{Flat}(R^{\op})$.
For $M\in\mathsf{FP}_n^{\le d}(R)$, the long exact $\Tor$ sequence gives
$\Tor_2^R(C,M)\to\Tor_1^R(A,M)
\to \Tor_1^R(B,M)$. The outer terms vanish, so $\Tor_1^R(A,M)=0$. Hence $A$ is $\mathsf{FP}_n^{\le d}$-flat.
\item (30)$\Rightarrow$(1): By Corollary \ref{cor:n-d-coh-equiv 1}, it suffices to prove that 
$\mathsf{FP}_n^{\le d}\text{-}\mathsf{Inj}(R)$ 
is closed under cokernels of monomorphisms. Let
$A\rightarrowtail B\twoheadrightarrow C$ be exact, with  $A,B\in\mathsf{FP}_n^{\le d}\text{-}\mathsf{Inj}(R)$. Then 
$C^{\flat} \rightarrowtail B^{\flat} \twoheadrightarrow A^{\flat}$ is exact, adn Proposition \ref{prop: carct 1} gives 
$A^{\flat}, B^{\flat}\in \mathsf{FP}_n^{\le d}\text{-}\mathsf{Flat}(R^{\op})$. 
Since $\big(\mathsf{FP}_n^{\le d}\text{-}\mathsf{Flat}(R^{\op}),\,
\mathsf{FP}_n^{\le d}\text{-}\mathsf{Cot}(R^{\op})
\big)$ is hereditary, $C^{\flat}\in\mathsf{FP}_n^{\le d}\text{-}\mathsf{Flat}(R^{\op})$. Proposition \ref{prop: carct 1} then gives  
$C\in\mathsf{FP}_n^{\le d}\text{-}\mathsf{Inj}(R)$.
\end{itemize}
\end{proof}

In particular, if $n\ge 2$ and $d\ge\gD(R)$, this recovers the equivalence of $(1)$, $(2)$, and $(4)$ in \cite[Thm. 2.10]{Zhu17}, as well as that of $(1)$ and $(12)$ in \cite[Thm. 2.8]{Zhu17}.\\

Recall that the \newterm{finitistic dimension} of a ring $R$, denoted $\mathsf{fin.dim}(R)$, is the supremum of the projective dimensions of all $R$-modules that admit a finite resolution by finitely generated projective modules. Given a ring $R$ and an $R$-$R$-bimodule $M$, the \newterm{trivial extension} $R \ltimes M$ is the ring with underlying abelian group $R\oplus M$ and multiplication $(r,m)(r',m') = (rr',rm' + mr')$. The following corollary is an immediate consequence of Theorem \ref{cor:n-d-coh-equiv 2} and \cite[Cor. 3.19]{CX17}.

\begin{cor}
Let $R$ be a left $(n,d)$-coherent ring with $n\ge 2$ and $d\in\mathbb{N}^*$.
Let $\lambda\colon R\twoheadrightarrow S$ be a ring epimorphism, and let $M$ be an $S$-bimodule such that
$M\in\mathsf{FP}_n^{\le d}\text{-}\mathsf{Flat}(R^{\op})$ and  $S\in\mathsf{FP}_n^{\le d}(R).$
Then the following inequalities hold:
\begin{enumerate}\renewcommand{\labelenumi}{(\alph{enumi})}
\item $\mathsf{fin.dim}(S \ltimes M)\le \mathsf{fin.dim}(S) + \mathsf{fin.dim}(R \ltimes M) + 1$.
\item $\mathsf{fin.dim}(S)\le \mathsf{fin.dim}(R) + \mathsf{fin.dim}(S \ltimes M)$.
\end{enumerate}
\end{cor}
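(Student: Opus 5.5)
The corollary is a direct application of \cite[Cor. 3.19]{CX17} to the ring epimorphism $\lambda\colon R\twoheadrightarrow S$ and the bimodule $M$. Accordingly, the whole task is to verify the homological hypotheses that result requires, using the $(n,d)$-coherence of $R$ through Theorem \ref{cor:n-d-coh-equiv 2}. As I recall it, \cite[Cor. 3.19]{CX17} compares finitistic dimensions of $R$, $S$, $R\ltimes M$ and $S\ltimes M$ under two conditions:
\begin{enumerate}
\item[(i)] a Tor-vanishing condition of the form $\Tor^R_{i}(M,S)=0$ for all $i\ge 1$ (possibly also with $S$ in the first variable);
\item[(ii)] a finiteness and projective-dimension condition on $S$ as an $R$-module, namely that $_RS$ admits a finite resolution by finitely generated projective modules.
\end{enumerate}
The plan is to derive both conditions from the hypotheses $M\in\mathsf{FP}_n^{\le d}\text{-}\mathsf{Flat}(R^{\op})$ and $S\in\mathsf{FP}_n^{\le d}(R)$.

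For (i), apply condition (26) of Theorem \ref{cor:n-d-coh-equiv 2} with $C=S\in\mathsf{FP}_n^{\le d}(R)$ and the $\mathsf{FP}_n^{\le d}$-flat $R^{\op}$-module $M$. Since $R$ is left $(n,d)$-coherent and $n\ge 2$, this gives $\Tor^R_{j+1}(M,S)=0$ for all $j\ge 0$. This is precisely the higher Tor-vanishing needed: a single $\Tor_1$ vanishing would not suffice, and the theorem upgrades it to all degrees.

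For (ii), use Proposition \ref{prop:equiv}: $(n,d)$-coherence gives $\mathsf{FP}_n^{\le d}(R)=\mathsf{FP}_\infty^{\le d}(R)$. Hence $S\in\mathsf{FP}_\infty(R)$ with $\pd_R(S)\le d<\infty$, the latter using $d\in\mathbb{N}^*$, which is why $d=\infty$ is excluded. A module in $\mathsf{FP}_\infty(R)$ of projective dimension at most $d$ has a resolution by finitely generated projectives whose $d$-th syzygy is finitely generated projective. This is the standard Schanuel-type argument, using that the $d$-th syzygy of a module of projective dimension at most $d$ is projective and, being in $\mathsf{FP}_\infty$, finitely generated. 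So $S$ has a finite resolution by finitely generated projectives, and in particular $\pd_R(S)\le\mathsf{fin.dim}(R)$. This is the form in which such bounds usually enter the inequalities (a) and (b).

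With (i) and (ii) established, the inequalities (a) and (b) are read off from \cite[Cor. 3.19]{CX17}. The main obstacle is matching hypotheses precisely. If \cite[Cor. 3.19]{CX17} instead requires Tor-vanishing of the form $\Tor^R_i(S,M)=0$ with sides swapped, or $M$ flat over $S$, I would check which side the statement needs and, if necessary, deduce the right-module version from the left-module data via the bimodule structure on $M$ and the epimorphism $\lambda$. The paper's framing, in which the corollary is immediate from Theorem \ref{cor:n-d-coh-equiv 2}, suggests that condition (26) with $C=S$ is exactly what is used.
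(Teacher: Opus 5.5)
Your proposal follows the paper's own argument. The paper also obtains $\Tor^R_i(M,S)=0$ for all $i\ge 1$ from condition (26) of Theorem \ref{cor:n-d-coh-equiv 2} with $C=S$, then reads off (a) and (b) from \cite[Cor. 3.19]{CX17}. Your extra step via Proposition \ref{prop:equiv}, showing that ${}_RS$ has a finite resolution by finitely generated projectives, is a harmless supplement that spells out a finiteness the paper's one-line proof leaves implicit. The hedge about needing the side-swapped vanishing $\Tor^R_i(S,M)=0$ is unnecessary: (26) gives exactly the vanishing of $\Tor^R_i(M,S)$, which is the form the paper uses.
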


\section{Acknowledgements}
The author would like to thank Marco Pérez and Viviana Gubitosi for their helpful comments and valuable discussions.

\end{document}